\documentclass[11pt]{amsart}
\usepackage{amssymb, amscd, amsmath, amsthm, epsf, epsfig, latexsym,color} 
\usepackage{hyperref}
\usepackage{float}

\usepackage[T1]{fontenc}
\usepackage{tgschola}

\usepackage{comment}

\usepackage{cutwin}
\usepackage{graphicx}

\usepackage{tasks}
\usepackage{amssymb}

\usepackage{pinlabel}
 \usepackage{tikz-cd}
 \usepackage{wrapfig}
 \usepackage{caption}
 
 \usepackage[inline]{enumitem}

\newcommand{\CP}{\mathbb{CP}}
\newcommand{\bCP}{\overline{\mathbb{CP}}}

\newcommand{\TryPackage}[3]{\IfFileExists{#1.sty}{\usepackage{#1}#2}{#3}
}
\TryPackage{mathrsfs}{\renewcommand{\mathcal}{\mathscr}}{%
        \TryPackage{eucal}{}{}}

\newcommand{\ep}{\epsilon}

\newcommand{\bbi}{{{\bf i}}}
\newcommand{\bbj}{{{\bf j}}}

\newcommand{\ZZ}{{\mathbb Z}}
\newcommand{\RR}{{\mathbb R}}
\newcommand{\CC}{{\mathbb C}}

\newcommand{\NN}{{\mathbb N}}

\newcommand{\TT}{{\mathbb T}}

\newcommand{\sur}{{\rm Sur}}

\def\Nbd#1{{\rm Nbd}({#1})}

\graphicspath{ {figures/} }

\theoremstyle{definition}

\newtheorem{df}{Definition}[section]

\theoremstyle{plain}

\newtheorem{thm}[df]{Theorem}

\newtheorem{cor}[df]{Corollary}
\newtheorem{lem}[df]{Lemma}
\newtheorem{prop}[df]{Proposition}

\numberwithin{equation}{section}

\date{August 22, 2026}

\author{Paul Kirk}

\address{Department of Mathematics, Indiana University \newline
\hspace*{.375in}  Bloomington, IN 47405} 

\email{\rm{pkirk@iu.edu}}

 \opencutright
\renewcommand{\windowpagestuff}

\begin{document}

\title[A fundamental group calculation] {On the  fundamental group calculation associated to  reverse-engineering an exotic $\CP ^2\#3\overline{\CP }^2$ and $\CP ^2\#2\overline{\CP }^2$}

 \maketitle
\begin{abstract} 
I provide  a detailed   exposition
of the fundamental group calculations in the ``reverse engineering" construction of an exotic $\CP ^2\#3\overline{\CP }^2$   and of an exotic $\CP ^2\#2\overline{\CP }^2$.\color{black}\end{abstract}

\setcounter{tocdepth}{1}

\section{Preliminaries}
\subsection{Introduction}   This article contains a detailed  construction of a simply connected homology $\CP ^2\#3\overline{\CP }^2$ and a simply connected homology $\CP ^2\#2\overline{\CP }^2$.  
 
 \medskip

Explicitly
\begin{enumerate}
\item Following \cite{BK}, I construct a closed 4-manifold $Z$ with Euler characteristic $6$ and signature $-2$  and prove in Theorem \ref{thm5.1} that $Z$ is simply connected.
\item Using the same surface in $\TT^4\#\bCP^2$ as in  \cite{AP2}, I construct a closed 4-manifold $Z_{AP}$ with Euler characteristic $5$ and signature $-1$. Then I prove that $Z_{AP}$ is simply connected in Theorem \ref{Zap} .
\item I isolate the cancellation ``trick'' which underlies these two examples in Theorem \ref{trick}.
\item I give a short proof of a sharpening Theorem 2 of \cite{BK} in Theorem \ref{lemDumb}, computing the fundamental group of the complement of a pair of Lagrangian tori in a product of punctured symplectic tori.
\end{enumerate}
  
In   Appendix \ref{exotic}, I prove that $Z$ and $Z_{AP}$ are  exotic smooth structures on $\CP^2\# 3\bCP^2$ and $\CP^2\# 2\bCP^2$.

\medskip

My  goal was to understand  
\cite{AP2} in the same way as \cite{BK}.  I found some simplifications and streamlining of the arguments of \cite{BK} along the way, leading to the first nine sections of this article.
I then applied the same approach to the pair I call $(R,\tilde\Sigma_{AP})$ below.  This pair is the building block  discovered by Akhmedov-Park, the key insight  of \cite{AP2}.
  In sum, this article provides an alternative unified  expository option for those who wish to understand the details of the fundamental group calculations
in this approach building small exotica.

   I know of three articles 
by authors other than AP or BK which contain proofs of simple connectivity  of minimal symplectic homology $\CP^2\#k\bCP^2$ for $k=2,3$: Fintushel-Stern \cite{FS3}  and  Akbulut \cite{Ak1,Ak2}.  Akbulut constructs Kirby diagrams and combines geometric and algebraic methods to show the manifolds are simply connected, whereas Fintushel-Stern's proof of simple connectivity relies on Theorem 2 of \cite{BK}.

\medskip

The demands on the reader wishing to follow the calculations presented here are familiarity with elementary algebraic  and differential topology,  and the Seifert-Van Kampen (SVK) theorem \cite{Seifert, Van Kampen} in the context of finite CW complexes.   Most of the work is carried out in the fundamental domain $[-\pi,\pi]^n$ for $\TT^n,~n=2,3,4,$ where geometric assertions may be described by illustrations or verified by formulae.  

\medskip

  I have attempted to be methodical and elementary rather than brief: it takes  me about 13 pages   to construct   the homology $\CP ^2\#3\overline{\CP }^2$ and show it is simply connected. The exposition is organized so that the construction and proof of simple connectivity of a homology $\CP ^2\#2\overline{\CP }^2$ is obtained in about eight additional pages.

\subsection{What's different}  Some novel features of the present construction of a homotopy $\CP ^2\#3\overline{\CP }^2$ are: 
We find   {\em complete} group presentations of the fundamental groups of building blocks at each stage of the construction. 
Our presentations are symmetric in  the first  three of the generators   $x,y,z, t$ of $\pi_1(\TT^4)$, making most calculations easy to follow, and making  notation concise.  
All six Luttinger surgeries 
are $-1$ surgeries,  and come in pairs exclusively of the form
$(x=\mu_x, t=\mu_y)$ (see Definition \ref{deflut}). We use the same ``displacement'' in each of the three pairs of tori (see Figure \ref{fig26fig}).
Each of these choices simplifies and shortens calculations. 
The majority of our calculations are elementary consequences of the well-known and easy Wirtinger calculation of the fundamental group of the Borromean rings complement.  We construct a convenient parameterization of the tubular  neighborhood  of the  push-off of a framed surface in a 4-manifold in order to accurately compute the fundamental groups of surface sums.  We isolate the algebraic trick which begins the cancellation process in Theorem \ref{trick}. Finally, we choose a slightly non-obvious decomposition in the final step of the construction  
which makes a full presentation easy to compute and then easy to trivialize.

\medskip

The section on the homotopy $\CP ^2\#2\overline{\CP }^2$ follows the same strategy as for $\CP ^2\#3\overline{\CP }^2$,   using the  symplectic genus surface   $\tilde \Sigma_{AP}$ in $R:=\TT^4\# \overline{\CP }^2$ constructed  \cite{AP2}.   This surface   has a complicated complement, with no dual exceptional 2-sphere, in contrast to the surface $\tilde \Sigma_S$ used to build the homotopy $\CP ^2\#3\overline{\CP }^2$. This prevents us from fully computing $\pi_1(R\setminus \tilde \Sigma_{AP})$. 
 Lemma \ref{dicyAF2}  establishes seven relations which hold in $\pi_1(R\setminus \tilde\Sigma_{AP})$, each  needed in our proof of simple connectivity of $Z_{AP}$. The proofs of Lemma \ref{dicyAF2}  and its companions Lemmata \ref{Ball} and \ref{9.2} are the most   labor-intensive part of this article to follow, and depend  critically on   a specific normal framing $\nu$ constructed for $\tilde\Sigma_{AP}$.

 \medskip

Terminology from symplectic geometry is occasionally inserted to guide the reader who wishes to get from these calculations to exotica, but symplectic geometry plays no logical role in the proofs of Theorems \ref{thm5.1} and \ref{Zap}. Readers  familiar with   symplectic geometry will understand the non-elementary proof, given in Appendix A, that $Z$ and $Z_{AP}$ are not diffeomorphic to a connected sum another manifold with $\bCP^2$.

\subsection{Contents}
In Sections 2, 3, and 4, a manifold $\TT^4_\sur$ is constructed by performing   torus surgeries on two tori, $T_x,T_y$  in $\TT^4$.  In Section 6, a   closed 4-manifold $P$ containing  a framed genus two surface $\Sigma_P$ is built as the surface sum of two copies of $\TT^4_\sur$ along a torus $T_z$.   Section 7 constructs a  framed genus two surface $\Sigma_S$  in the manifold  $S=\TT^4_\sur\#2\overline{\CP }^2$.   

 In Section \ref{sec7} a closed 4-manifold $Z$ with the homology of $\CP ^2\# 3\overline{\CP }^2$ is constructed as the surface sum of $S$ and $P$ along $\Sigma_S$ and $\Sigma_P$. The fundamental group of $Z$ is computed and found to be trivial.

The trick starting  the cancellation process in a presentation of $\pi_1(Z)$ is formalized in 
Theorem \ref{trick}.  We give a short  proof of a strengthening of Theorem 2 of \cite{BK}  in Theorem \ref{lemDumb}.\footnote{Those looking only for an alternative exposition of the proof of Theorem 2 of \cite{BK} 
need only read Sections \ref{sect2}, \ref{sect3}, and Theorem \ref{lemDumb}, a total of about four pages.}

In the last section,  we recall the Akhmedov-Park construction of a  genus two surface $\tilde\Sigma_{AP}$ inside $R=\TT^4_\sur\#\overline{\CP }^2$. We then combine Theorem \ref{trick} with some calculations in $\pi_1(R\setminus \tilde\Sigma_{AP})$ to show that the surface sum of $R$ and $P$ is a closed simply connected  homology $\CP ^2\# 2\overline{\CP }^2$.

Appendix \ref{exotic} briefly explains why $Z$ and $Z_{AP}$ are exotic. Appendix \ref{CP} describes, for the convenience of the reader, the four 4-dimensional cut-and-paste constructions of {\em torus surgery, surface sum, resolving a double point, and blowing up}.

 \subsubsection{Study guide} 

A first pass through this article might consist, in order,  of a glance at Figure \ref{fig26fig},  Definition \ref{deflut}, Section \ref{pushoff2}, the statements of Theorem \ref{thmP},  Theorem \ref{thmS},  Theorem \ref{thm5.1},  Figure \ref{fig44fig}, the statement of  Theorem \ref{R}, and finally the statement of Theorem \ref{Zap}.  A second pass would read straight through, avoiding only the proofs of Proposition \ref{lem1.33},   Lemmata
  \ref{9.2},  \ref{Ball},  and \ref{dicyAF2}.  Armed with a  pot of good coffee and at least five different colored pencils, the reader should at that point   enjoyably  make it through the remaining  proofs.
   
\subsection{Acknowledgements}  Thanks to O. Korkmaz, S. Kopylov,  T.~Lidman, L.~Piccirillo,     and R. Torres for helpful comments and encouragement, and to Cuadros Quartet for inspiration. Extra special thanks to S. Baldridge for teaching me about this beautiful topic 20 years ago,  for insightful  recent discussions, and for running AI-checks on  a late draft; the  present article is greatly improved as a result.
\subsection{Notation and terminology}
In what follows we write $$S^1=\RR/2\pi\ZZ\text{ and }\TT^n=(\RR/2\pi\ZZ)^n.$$
The notation $[a,b]$
means $\{e^{\theta\bbi}\mid a\leq \theta\leq b\}\subset S^1$ and similarly for $(a,b)$. For example, $S^1=[-\tfrac\pi 5,\tfrac\pi 5]\cup[\tfrac\pi 5,2\pi-\tfrac\pi 5]$.    

 \medskip
 
 All surfaces in this article are compact and orientable. The term 
{\em $n$-punctured surface} means the result of removing
$n$ disjoint open disks from a closed surface.

\medskip

The notation $\Nbd{S}$ denotes a tubular neighborhood of a  submanifold $S\subset X$, and $\nu(S,X)$ denotes the normal bundle of $S$, by definition the orthogonal complement to $T_*S$ in $T_*X|_S$ with respect to some Riemannian metric. 

Given a codimension two oriented pair $(X ,S)$, a {\em meridian of $S$} is any oriented (continuous) loop $\mu:S^1\to X \setminus S$ which admits an extension   
$\hat\mu:D^2\to X $   intersecting $S$ in a single point,  transversely and positively.  If $S$ is path connected, every meridian is freely homotopic to the boundary of the normal disk to a point $s\in S$. 
A {\em framing} of $S$, is, by definition, a trivialization of the normal bundle, ${\rm fr}:\nu(S,X )\cong S\times \RR^2$, up to isotopy.  The isotopy class of the normal vector field $\nu:={\rm fr}^{-1}(1,0)$ is determined by and determines ${\rm fr}$.
\medskip

 Henceforth identify $\TT^3=S^1\times S^1\times S^1$ with $ \TT^3\times\{0\}\subset \TT^4$.  The term {\em line segment} will be reserved to mean  a straight line segment in the convex open set $(-\pi,\pi)^4$ in the fundamental domain $[-\pi,\pi]^4$ for $\TT^4.$

 An embedded $k$-torus   in $\TT^n$ is called a {\em coordinate}  torus    provided it is the image of an affine $k$-dimensional subspace of $\RR^n$  which is parallel to one of the coordinate subspaces.

\medskip

 We use two different base points in the 4-torus in this exposition: for the most part we take the base point to be $e=(0,0,0,0)$, but occasionally we use $e_z=(-\tfrac\pi 2 +\tfrac{\ep}{\sqrt{2}},\tfrac\pi 2 - \tfrac{\ep}{\sqrt{2}},0,0)$ (mostly in Section \ref{sec5}). The straight line segment $\rho_1$ joining $e$ to $e_z$ is always used to identify fundamental groups (and their presentations)  based at different points.  

\medskip

We use $\bar g$ for the inverse $g^{-1}$ of a group element $g$. A {\em relation on $g_1,\cdots,g_n$} is a word $w(g_1,\cdots , g_n)$ in the free group on the set $\{ g_1,\cdots,g_n\}$. We often abuse notation and write ``$w_1=w_2$'' for the relation $w_1 \bar w_2$.

\subsubsection{$\epsilon$-$\delta$s} We use two small positive numbers in the following.   First, fix, for the rest of this article any $0<\ep\leq\tfrac{\pi}{30}$.  (Our pictures are somewhat misleading in that to include any detail, $\ep$ appears larger).  We use  $\ep$  since it appears frequently and takes less typing than $\tfrac{\pi}{30}$.
 
  A  second positive number $\delta<\ep$ will appear below, once in the construction of the second building block, and once (independently) in the construction of the third building block.  
  It will help to assume, in addition to the requirements of each construction on the size of $\delta$, that
  $0<\delta\leq \tfrac{\ep^2}{8}.$   
    
  In any case, the reader should think of $\ep$ geometrically as just small enough so that the $\ep$-neighborhoods in $[-\pi,\pi]^4$ of various (disjoint) point-sets  which appear in the course of the constructions do not overlap. By contrast, the ratio  $\delta/\ep$   should be thought of as small.

\subsection{The museum genus two surface} Fix henceforth the smooth, closed,  oriented, based, genus two surface $(\Sigma,\sigma)$ obtained by identifying sides labeled by $a_1, b_1, a_2, b_2$ of an  octagon in the usual way, so that
$$\pi_1(\Sigma,\sigma)=\langle a_1, b_1, a_2, b_2\mid [a_1,b_1][a_2,b_2]\rangle\text{ and } a_1\cdot b_1=1.$$

We leave as an exercise to the reader the following elementary
2-manifold lemma, which is used below to parameterize  genus two subsurfaces of 4-manifolds.

\begin{lem} \label{lem3.1} Let $c_1,d_1, c_2,d_2$ be oriented embedded circles in $\Sigma$ so that $c_1$ intersects $d_1$ transversely in one positive point $p_1$, so that $c_2$ intersects $d_2$ transversely in one positive point $p_2$, and all other intersections of $c_i$s  and $d_i$s empty. Suppose $\rho$ is an arc in $\Sigma$ from $p_1$ to $p_2$ whose interior misses $c_1\cup d_1\cup c_2\cup d_2$ and so that 
$[c_1,d_1][\rho c_2 \bar \rho, \rho d_2 \bar \rho]=1$ in $\pi_1(\Sigma, p_1)$.

Then
 there exists an  orientation preserving diffeomorphism $h: \Sigma \cong \Sigma $ satisfying $h(\sigma)=p_1$, $h(a_1)=c_1$, $h(b_1)=d_1 $, $h(a_2)= \rho c_2\bar\rho$, and $h(b_2)= \rho d_2 \bar \rho$
 in $\pi_1(\Sigma,p_1)$.\qed
\end{lem}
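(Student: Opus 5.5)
The plan is to cut $\Sigma$ open along the given curves and recognize the resulting pieces. First take a closed regular neighborhood $N_1$ of $c_1\cup d_1$ and $N_2$ of $c_2\cup d_2$. Since $c_i$ meets $d_i$ transversely in a single point, each $N_i$ is a once-punctured torus. These two pieces are disjoint, because all other intersections are empty. Next let $N_\rho$ be a thin band around the part of $\rho$ lying outside $N_1\cup N_2$. Then $W=N_1\cup N_\rho\cup N_2$ is a boundary connected sum of two punctured tori, hence a genus two surface with one boundary circle, and $\Sigma\setminus \mathrm{int}\,W$ is a compact surface with one boundary component.

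The key claim is that the complement $C=\Sigma\setminus\mathrm{int}\,W$ is a disk. Here is the counting argument. The Euler characteristic gives $\chi(C)=\chi(\Sigma)-\chi(W)=-2-(-3)=1$. A connected orientable surface with one boundary circle and $\chi=1$ is a disk. Connectedness of $C$ holds because $\partial C$ is a single circle and $\Sigma$ is connected: every component of $C$ must meet $\partial W$, and $\partial W$ is connected. Thus $\Sigma=W\cup_\partial D^2$.

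The main point, where I expect the real work, is matching this decomposition with the standard octagon so that the curves go to the prescribed based homotopy classes. I would handle it as follows.

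\begin{enumerate}
\item Build the standard model: $\Sigma$ is a regular neighborhood $W_0$ of $a_1\cup b_1\cup\rho_0\cup a_2\cup b_2$, with a disk attached. Here $\rho_0$ is an arc from $\sigma$ to a point $\sigma'$ where $a_2,b_2$ meet, chosen so that $a_2,b_2$ become the loops $\rho_0 a_2\bar\rho_0$ and $\rho_0 b_2\bar\rho_0$ (after the usual reinterpretation of the octagon generators as based loops of this form).
\item Construct a diffeomorphism $W_0\to W$ sending $a_i\mapsto c_i$, $b_i\mapsto d_i$, $\rho_0\mapsto\rho$ and $\sigma\mapsto p_1$. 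This is possible because both are plumbings glued along a band, and we can choose orientations so the plumbings have positive sign, since $c_i\cdot d_i=+1$.
\item There is one ambiguity: the cyclic order in which the four half-edges of $c_1,d_1$ and the band attach at $p_1$ (and similarly at $p_2$). A band can be attached into any of the four corners of the plumbing.
\end{enumerate}

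Different corner choices change the boundary word of $W$. The hypothesis $[c_1,d_1][\rho c_2\bar\rho,\rho d_2\bar\rho]=1$ in $\pi_1(\Sigma,p_1)$ is what pins down the correct one. Reading $\partial W$ as a based loop gives a product of the four based loops in a definite order, and $\partial W$ bounds the disk $C$, so that product is trivial. For the wrong corner choices the boundary word is a different cyclic arrangement, for instance one that interchanges the commutator order or conjugates factors. In the free group on four generators that arrangement is not conjugate to $[c_1,d_1][c_2',d_2']$. Since a surface group has a single defining relator up to conjugacy and inversion, the hypothesis forces the standard arrangement. I would verify this by listing the four corner cases at each point and computing their boundary words. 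With the standard arrangement, the diffeomorphism $W_0\to W$ preserves the boundary word.

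Finally, extend the map across the disks $\Sigma\setminus W_0\to C$ by Alexander's trick, which is possible since any orientation-preserving diffeomorphism of $S^1$ extends over $D^2$. This yields an orientation-preserving $h$ with the stated properties on $\pi_1$.
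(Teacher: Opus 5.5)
The paper gives no proof of this lemma (it is left as an exercise), so your argument has to stand on its own. Most of it does.

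\begin{itemize}
\item The Euler characteristic count showing that $C=\Sigma\setminus \mathrm{int}\,W$ is a disk is correct. It holds for every choice of corners.
\item Once the corner data of $W$ agree with those of the model, matching the two ribbon graphs and extending over the complementary disks finishes the proof. If you want $h$ smooth, extend using that $\mathrm{Diff}^+(S^1)$ is connected rather than by coning.
\end{itemize}

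The gap is in the one step that uses the hypothesis. Write $\pi_1(\Sigma,p_1)=F/\langle\langle w\rangle\rangle$, with $F$ free on $c_1,d_1,c_2',d_2'$ and $w$ the boundary word of $W$. The hypothesis then says only that $u=[c_1,d_1][c_2',d_2']$ lies in $\langle\langle w\rangle\rangle$, i.e.\ that $u$ is \emph{some} product of conjugates of $w^{\pm1}$. The fact that $u$ is not conjugate to $w^{\pm1}$ in $F$ does not contradict this. The uniqueness principle you invoke (Magnus' theorem) needs $\langle\langle u\rangle\rangle=\langle\langle w\rangle\rangle$. To get equality from inclusion you must add two facts: $F/\langle\langle u\rangle\rangle$ is itself a genus two surface group surjecting onto $F/\langle\langle w\rangle\rangle\cong\pi_1(\Sigma)$, and surface groups are Hopfian.

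A more elementary repair uses the case analysis you proposed. Put $R_1=[c_1,d_1]$ and $R_2=[c_2',d_2']$.

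\begin{enumerate}
\item If $\rho$ leaves $p_1$ and reaches $p_2$ through arbitrary quadrants, then $w$ is, up to conjugation and inversion, $g_1R_1\bar g_1\, g_2R_2\bar g_2$. Here $g_1\in\{1,\bar c_1,\bar c_1\bar d_1,\bar d_1\}$ and $g_2\in\{1,\bar c_2',\bar c_2'\bar d_2',\bar d_2'\}$, and $g_i=1$ exactly for the sector between the positive directions of $c_i$ and $d_i$.
\item Combining $w=1$ with the hypothesis $R_1R_2=1$ gives $g_1\bar R_2\bar g_1=g_2\bar R_2\bar g_2$. Hence $\bar g_2g_1$ commutes with $R_2$.
\item $R_2\neq 1$. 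Otherwise $c_2'$ and $d_2'$ would commute, hence lie in a cyclic subgroup, forcing $c_2\cdot d_2=0$.
\item Abelian subgroups of $\pi_1(\Sigma)$ are cyclic. The corresponding cover is either noncompact, with free fundamental group, or a finite cover with negative Euler characteristic, whose fundamental group is non-abelian.
\item So $\bar g_2g_1=h^n$ and $R_2=h^m$ with $m\neq0$. Then $m[h]=[R_2]=0$ in the torsion-free group $H_1(\Sigma)$, so $[g_1]=[g_2]$.
\item The classes $[c_1],[d_1],[c_2],[d_2]$ are independent, since their intersection matrix is the standard symplectic one. Therefore $g_1=g_2=1$.
\end{enumerate}

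This pins the corners exactly as the paper does when $\rho_3$ bisects the positive tangent directions in Proposition \ref{lem1.33}. After that, the rest of your construction goes through.
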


 \section{The fundamental group of the complement of the 3-component link $C$ in $\TT^3$}\label{sect2}

\subsection{Some submanifolds of and loops in $\TT^3$}

Consider the link $C$ of three coordinate circles 
\begin{equation}\label{Cz}C_x=S^1\times(-\tfrac \pi 2,\tfrac \pi 2),C_y=\{\tfrac \pi 2\}\times S^1\times \{-\tfrac \pi 2\},C_z=\{-\tfrac\pi 2\}\times \{\tfrac \pi 2\}\times S^1
\end{equation}  and the coordinate torus 
\begin{equation}\label{EE}E=\TT^2\times\{0\}
\end{equation} in $\TT^3$.
Set  
\begin{equation}\label{Y}
 Y:=\TT^3\setminus \Nbd{C_z} \text{ and }
 Y^c:=\TT^3\setminus \Nbd{ C_x\sqcup C_y\sqcup C_z},\end{equation} 
 where, to be definite, $\Nbd{C_z}$ and $\Nbd{ C_x\sqcup C_y\sqcup C_z}$ means  tubular neighborhoods of radius $\ep$.  So $Y$ has one 2-torus boundary component  and $Y^c$ has three.

\medskip

 \begin{wrapfigure}[8]{r}{0.3\textwidth}
\begin{center}
\vskip-.43in
\includegraphics[width=2.5in]{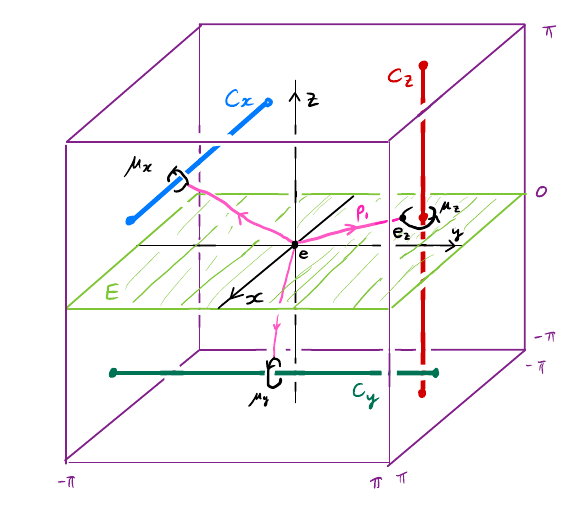}
 \vskip-.2in
\caption{\label{fig26fig}}
\end{center}
\end{wrapfigure}
Figure \ref{fig26fig} shows the link $C$ and the torus $E$ in the fundamental domain $[-\pi,\pi]^3$ for $\TT^3$.  Also depicted are the base point $e=(0,0,0)$,  and based loops
 $x(\theta)=(\theta,0,0),~ y(\theta)=(0,\theta,0),~z(\theta)=(0,0,\theta),~ 0\leq \theta\leq 2\pi.$ 
In addition, based meridians  $\mu_x,  \mu_y,$ and $\mu_z$  for $C$ are shown.  These are based using the indicated line segments,  which lie in the $yz$, $xz$, and $xy$ planes. The line segment $\rho_1(u)=(-u,u,0,0),~0\leq u\leq\tfrac\pi 2 - \tfrac{\ep}{\sqrt{2}}$ is shown.


 \subsection{Fundamental group of $Y^c$}
\begin{prop} \label{thm1}The fundamental group $\pi_1(Y^c,e)$ has the presentation  
$$
\pi_1(Y^c,e) =\langle x,~y,~z
\mid [x,[z, \bar y]], [y, [x,\bar z ]], [z,[y,\bar x]]
\rangle.
$$
In $\pi_1(Y^c,e)$,  the loops $\mu_x,\mu_y,\mu_z$ satisfy
$$ \mu_x=[z, \bar y],~ \mu_y=[x,\bar z ],~\mu_z=[y,\bar x].
$$
\end{prop}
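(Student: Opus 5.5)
We compute $\pi_1(Y^c,e)$ by Seifert--Van Kampen, decomposing $\TT^3$ in the $z$-direction and using the Wirtinger calculation for the Borromean rings that the introduction advertises. The plan has four steps.

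\textbf{Step 1: a decomposition of $\TT^3$.} Observe first that $C_x=S^1\times\{\tfrac\pi2\}\times\{-\tfrac\pi2\}$ lies in the slice $z=-\tfrac\pi2$. Likewise $C_y$ lies in the slice $x=\tfrac\pi2$, and $C_z$ lies in the slice $y=\tfrac\pi2$. The idea is that $\TT^3$ is $\TT^2\times S^1$, and removing $C$ gives the complement of the Borromean rings in $S^3$ with the ``extra'' generators $x,y,z$ coming from the torus.

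Concretely, cut $\TT^3$ along the torus $E=\TT^2\times\{0\}$, which misses $C$. The complement of $E$ in $\TT^3$ is $\TT^2\times(0,2\pi)$, with the link $C$ sitting inside. So $Y^c$ is obtained from $M:=\TT^2\times[0,2\pi]\setminus\Nbd{C}$ by gluing $\TT^2\times\{0\}$ to $\TT^2\times\{2\pi\}$. By SVK (HNN form), $\pi_1(Y^c)=\pi_1(M)\ast\langle z\rangle$ modulo $z\,\alpha\,\bar z=\phi(\alpha)$ for $\alpha\in\pi_1(E)=\langle x,y\rangle$. Here $\phi$ identifies the two boundary copies of $x,y$.

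\textbf{Step 2: compute $\pi_1(M)$.} Choose a Heegaard-type description of $\TT^2\times I$ minus three arcs/circles. Alternatively, and more concretely, project $M$ to the $(x,y)$ torus and compute a Wirtinger-style presentation. The component $C_z$ is a vertical circle, which becomes an arc from bottom to top after cutting. The components $C_x$ and $C_y$ are horizontal circles at heights $-\tfrac\pi2$ and (varying) levels.

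My guess is that $\pi_1(M)$ is free on $x_0,y_0$ (bottom) together with meridians, subject to relations expressing the top copies $x_1,y_1$ in terms of the bottom ones. Passing the level $z=-\tfrac\pi2$ where $C_x$ sits conjugates or multiplies $y$ by $\mu_x$ (since $C_x$ is parallel to $x$ and its meridian links loops in the $y$-direction). Similarly, the vertical arc $C_z$ at $(-\tfrac\pi2,\tfrac\pi2)$ forces commutation relations $[\mu_z, \cdot]$ linking $x$ and $y$.

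\textbf{Step 3: simplify.} Carry this out by the standard torus-link trick: each coordinate circle $C_i$ together with its meridian and the parallel generator gives a boundary torus relation $[\mu_x,x]=1$, $[\mu_y,y]=1$, $[\mu_z,z]=1$. The linking geometry gives $\mu_x=[z,\bar y]$ and its cyclic versions. Indeed, a small square in the $yz$-plane around the point where $C_x$ crosses it bounds the loop $z\bar y\bar z y$ up to basing. This is exactly how the torus $\{x\}\times\TT^2$ minus a disk shows that its boundary is a commutator.

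So I would instead use the cleaner route: $Y^c$ deformation retracts (or is built from) the 2-skeleton of $\TT^3$ with punctured 2-cells. Take the CW structure of $\TT^3$ with one 0-cell, 1-cells $x,y,z$, 2-cells the three coordinate tori, and one 3-cell. The link $C$ is chosen so that $C_x$ pierces the $yz$-torus once, $C_y$ the $xz$-torus, and $C_z$ the $xy$-torus, each away from the 1-skeleton.

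Removing $\Nbd{C}$ therefore punctures each 2-cell and the 3-cell. The complement retracts onto the 1-skeleton union the three punctured tori union the three boundary tori $\partial\Nbd{C_i}$. The punctured $yz$-torus identifies $\mu_x$ with $[z,\bar y]$ (with orientation conventions fixed by the figure). The boundary torus $\partial\Nbd{C_x}$ contributes the relation $[x,\mu_x]=1$, where the longitude of $C_x$ is homotopic to $x$ via the segment basing. This yields exactly the three relations.

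\textbf{Step 4: justify the retraction (main obstacle).} The main obstacle is proving the retraction: that the complement of the 3-cell's interior in $Y^c$ adds no further relations. Equivalently, the 3-cell minus three arcs is handled by a 3-dimensional SVK in which the removed arcs contribute only the boundary-torus relations. I would verify this by writing $[-\pi,\pi]^3$ minus the three arcs as a thickened graph: a ball minus three unknotted, unlinked proper arcs has free $\pi_1$ on three meridians. Gluing along its boundary, a sphere with six punctures attached to the punctured 2-skeleton, then gives precisely the relations above. Checking signs and basing paths against Figure \ref{fig26fig} is routine but fiddly.
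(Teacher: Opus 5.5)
\textbf{The retraction claimed in Step 3 is false, and it hides exactly the step that produces the relations.} Let $L$ be the $1$-skeleton together with the three punctured coordinate tori and the three tori $\partial\Nbd{C_i}$. The punctured $2$-skeleton retracts onto $x\vee y\vee z$. Attaching $\partial\Nbd{C_i}$ along its meridian circle then adds a \emph{new free generator} $\ell_i$ (a longitude), subject only to $[\mu_i,\ell_i]=1$. Hence $\pi_1(L)$ has six generators, and $H_1(L)\cong\ZZ^6$, $\chi(L)=-2$, whereas $H_1(Y^c)\cong\ZZ^3$ and $\chi(Y^c)=0$. So $Y^c$ does not retract onto $L$. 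In particular, your sentence ``the longitude of $C_x$ is homotopic to $x$'' is false in $L$. The homotopy slides the longitude down the strip $\{(s,-\tfrac\pi2,w)\mid s\in S^1,\ 0\le w\le\tfrac\pi2\}$ into $E$, then across $\{(s,u,0)\mid -\tfrac\pi2\le u\le 0\}$ to $x$, and it runs through the open $3$-cell. So the $3$-cell does not merely ``add no further relations'': it is the sole source of $[x,\mu_x]$, $[y,\mu_y]$, $[z,\mu_z]$. The proof consists precisely in showing that it contributes these relations, with the right conjugating elements, and nothing more. Step 4 asserts this but does not carry it out. Steps 1--2 also contain errors: $C_x=\{(s,-\tfrac\pi2,\tfrac\pi2)\}$ lies in $z=\tfrac\pi2$, and $E$ does not miss $C$, since $C_z$ meets it at $(-\tfrac\pi2,\tfrac\pi2,0)$.

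\textbf{How to turn Step 4 into a proof.} Write $\TT^3\setminus C=(K\setminus C)\cup(B\setminus C)$. Here $K$ is the union of the coordinate tori through $e$, and $B$ is the cube $[0,2\pi]^3$ (not $[-\pi,\pi]^3$, which is the $3$-cell for the tori through $(\pi,\pi,\pi)$). Thicken $K\setminus C$ so that SVK applies with overlap the six-punctured sphere $S\subset\partial B$. Then $\pi_1(Y^c,e)=F(x,y,z)/\langle\langle f_*(N)\rangle\rangle$ with $N=\ker\big(\pi_1(S)\to\pi_1(B\setminus C)\big)$. Knowing that $\pi_1(B\setminus C)$ is free on three meridians is not enough; you need $N$.

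Take the strips $\{(s,-\tfrac\pi2,w)\}$, $\{(w,s,-\tfrac\pi2)\}$, $\{(-\tfrac\pi2,w,s)\}$ with $s\in S^1$, $0\le w\le\tfrac\pi2$. They are pairwise disjoint and each meets $C$ only in its own component. So $C\cap B$ is a trivial tangle, and $N$ is normally generated by the three loops in $S$ that encircle the two endpoints of an arc along the boundary edge of its strip.

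The $C_x$ loop maps to a commutator of $\mu_x$ with the closed bottom edge $\{(s,-\tfrac\pi2,0)\}$. Base everything along $e\to(0,-\tfrac\pi2,0)$. That edge then becomes $x$, and $\mu_x$ becomes the paper's $[z,\bar y]$, since the basing path, continued up to the meridian circle, is homotopic in the punctured $yz$-torus to the segment used in the paper. This gives $[x,[z,\bar y]]$. The symmetry $(a,b,c)\mapsto(c,a,b)$ permutes $C_x\to C_y\to C_z$ and the three strips, so it gives the other two relations. One of the three is redundant, because the three loops cobound a pair of pants in $S$.

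Repaired this way, your argument is a genuinely different proof from the paper's. The paper cuts $Y^c$ along a genus-two surface (the slices $z=0$ and $z=-\tfrac\pi4$ joined along $\partial\Nbd{C_z}$). One piece retracts to a torus union a punctured torus, the other to the wedge of $\partial\Nbd{C_x}$ and $\partial\Nbd{C_y}$, and it then eliminates three of six generators. Your version is symmetric in $x,y,z$ and makes each relation geometrically transparent. The cost is the trivial-tangle and kernel bookkeeping above.
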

\noindent{\bf Remark}/{\em Proof.} The reader may recognize that the given presentation of $\pi_1(Y^c)$ is the same as the Wirtinger presentation of the fundamental group of the Borromean rings complement. Indeed, 
Proposition \ref{thm1} follows from the well-known  fact that $0$-surgery on the Borromean rings $(S^3,B)$ produces $(\TT^3,C)$.    

Explicitly,  Hui-Purcell    \cite[Section 2]{Hui-Purcell} produce a diffeomorphism of $Y^c$ with the Borromean rings complement $S^3\setminus B$ (which appropriately interchanges meridians and longitudes), from which the proof  of  Proposition \ref{thm1} reduces to the  Wirtinger calculation of   $\pi_1(S^3\setminus \Nbd B)$.  For the reader's convenience, we provide,  in Appendix \ref{prop21}, an  elementary proof  which bypasses the identification with the Borromean rings. \qed

\section{The tori $T_x,T_y,T_z$ in the 4-manifold $Y^c\times S^1$}\label{sect3}

The 4-torus $\TT^4$ contains the two (Lagrangian with respect to the standard symplectic form $dxdy + dzdt$ on $\TT^4$) coordinate tori
\begin{equation}\label{CxCy}T_x=C_x\times S^1=\{(x,-\tfrac\pi 2, \tfrac\pi 2,t)\mid x,t\in S^1\} \text{ and } T_y=C_y\times S^1=\{(\tfrac\pi 2, y, -\tfrac\pi 2,t)\mid y,t\in S^1\}\end{equation}
and the (symplectic) coordinate torus 
$$T_z=C_z\times S^1=\{(-\tfrac\pi 2, \tfrac\pi 2,z, t)\mid z,t\in S^1\}.$$
Notice that
$$Y^c\times S^1=\TT^4\setminus \Nbd{T_x\sqcup T_y\sqcup T_z}.$$
 This compact  4-manifold has boundary a union of three 3-tori, namely the boundaries of the tubular neighborhoods of the three 2-tori $T_x, T_y$ and $T_z$.  Thus torus surgery on $T_x,T_y$ in $Y\times S^1$ corresponds to attaching two copies of $\TT^2\times D^2$ to two of the three boundary components of $Y^c\times S^1$.

\medskip

Denote by $t$ the loop in $Y^c\times S^1\subset \TT^4$ given by $$t(\theta)=(e,\theta),~ 0\leq \theta\leq 2\pi.$$   The based loops $x,y,z,\mu_x,\mu_y,\mu_z$ in $Y$ defined above are considered as based loops in $Y^c\times S^1$ via the inclusion $Y^c=Y^c\times\{0\}\subset Y^c\times S^1$.

\medskip

Set
 \begin{equation}\label{Bor}
 {\rm Bor}=\{[x,[z, \bar y]], [y, [x,\bar z ]], [z,[y,\bar x]] \}\end{equation}
 and  
  \begin{equation}\label{tCen}{\rm tCen}=\{[t,x],[t,y],[t,z]\} .\end{equation}
  These are three-element subsets of the free group on $x,y,z,t$. Proposition \ref{thm1}  implies the following.
\begin{cor}\label{cor2}
$$\pi_1(Y^c\times S^1,e)=
\langle
x,y,z, t
\mid
{\rm Bor, tCen} \rangle,
$$
and in $\pi_1(Y^c\times S^1)$,
$$
\mu_x=[z, \bar y],~ \mu_y=[x,\bar z ],~\mu_z=[y,\bar x].
$$
The three boundary components satisfy
\[\begin{split}
 i_x\big(\pi_1\big(\partial \Nbd{T_x}\big)\big)=\langle x, \mu_x , t \mid  [t,x],~[t,\mu_x],~[x, \mu_x]\rangle,\\
  i_y\big( \pi_1\big(\partial \Nbd{T_y}\big)\big)=\langle y, \mu_y , t \mid  [t,y],~[t,\mu_y],~[y, \mu_y]\rangle,
\\   i_z\big(\pi_1\big(\partial \Nbd{T_z}\big)\big)=\langle z, \mu_z , t \mid  [t,z],~[t,\mu_z],~[z, \mu_z]\rangle.
 \end{split}
 \]
 where $i_x:\partial\Nbd{T_x}\subset Y^c\times S^1$ and similarly for $i_y, i_z$.  These are based using the shortest line segment to $e$. 
\qed
 \end{cor}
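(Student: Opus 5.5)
The presentation follows directly from Proposition \ref{thm1} and the product formula for fundamental groups. Since $Y^c\times S^1$ is a product and the base point is $(e,0)$, we have $\pi_1(Y^c\times S^1,e)\cong\pi_1(Y^c,e)\times\pi_1(S^1,0)$. Here $\pi_1(S^1)=\langle t\rangle\cong\ZZ$, and $t$ is the loop $\theta\mapsto(e,\theta)$. The loops $x,y,z$ lie in the slice $Y^c\times\{0\}$, so they generate the first factor. A presentation of a direct product of $G=\langle S\mid R\rangle$ with $\ZZ=\langle t\rangle$ is $\langle S,t\mid R,\ [t,s]\ (s\in S)\rangle$. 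This gives $\langle x,y,z,t\mid {\rm Bor},{\rm tCen}\rangle$. The formulas for $\mu_x,\mu_y,\mu_z$ hold in $\pi_1(Y^c)$ by Proposition \ref{thm1}. They therefore persist under the inclusion $Y^c=Y^c\times\{0\}\subset Y^c\times S^1$, which is the map onto the first factor.

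For the boundary components, observe that $\partial\Nbd{T_x}=\partial\Nbd{C_x}\times S^1$, where $\partial\Nbd{C_x}\subset Y^c$ is a 2-torus boundary component. Its fundamental group is $\ZZ^3$, generated by a longitude, a meridian, and the $S^1$ factor. Under $i_x$ these map to the following loops.

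\begin{itemize}
\item The meridian circle maps to $\mu_x$. The paper bases $\mu_x$ by a line segment in the $yz$-plane to $e$, which is the shortest segment from $e$ to $\partial\Nbd{C_x}$, so the basings agree.
\item The $S^1$ factor maps to $t$, after conjugating by the same segment. Since this segment lies in the slice $t=0$, the loop $\{p\}\times S^1$ based along it is homotopic to $t$. The homotopy slides the segment in the $Y^c$ direction; this is legitimate because the product structure makes $S^1$-loops at different points of $Y^c$ conjugate by the path between them.
\item The longitude of $C_x$ is the pushed-off circle $\theta\mapsto(\theta,-\tfrac\pi2+\ep',\tfrac\pi2-\ep'')$, i.e. $C_x$ displaced by the basing segment. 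Together with the segment it must be identified with $x$. The straight-line homotopy from $(\theta,a,b)$ to $(\theta,0,0)$ runs in the $yz$-directions at each $x$-coordinate $\theta$. Because $C_y$ and $C_z$ sit at $x=\pm\tfrac\pi2$ and occupy only specific $(y,z)$ positions, I must check that this homotopy avoids $\Nbd{C_y}$ and $\Nbd{C_z}$. If it does not, reroute the segment.
\end{itemize}

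The displayed relations $[t,x],[t,\mu_x],[x,\mu_x]$ are the images of the abelian relations of $\ZZ^3$. The claim describes the image subgroup by these generators; a statement of injectivity is not needed here. The $y$ and $z$ cases follow by the cyclic symmetry $x\to y\to z$ of the configuration in \eqref{Cz}.

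The main obstacle is this identification of the longitude with $x$, $y$, $z$ exactly rather than up to a conjugate or a multiple of the meridian. It is a geometric check in the fundamental domain $[-\pi,\pi]^3$, and I would verify it using Figure \ref{fig26fig} and the explicit coordinates of $C$. If the straight homotopy crosses another component, the longitude would differ from $x$ by conjugation by some $\mu$. I would then check that this conjugation is harmless in the stated image subgroup, or choose a basing of the longitude that avoids it.
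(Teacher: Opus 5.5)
Your proof is correct and is exactly the argument the paper compresses into ``Proposition \ref{thm1} implies the following.'' It consists of the direct-product presentation, the meridian formulas transported along $Y^c\times\{0\}\subset Y^c\times S^1$, and the identification of the images of the peripheral generators. As you say, that identification is all the boundary display is used for later.

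The check you flag does succeed, so no rerouting is needed. The annulus $\{(\theta,-u,u,0)\mid \theta\in S^1,\ 0\le u\le \tfrac\pi2-\tfrac{\ep}{\sqrt2}\}$, which is the annulus the paper itself uses later in Lemma \ref{Pcomp}, stays at distance at least $\tfrac\pi2$ from $T_y$ and $T_z$ (compare third, resp.\ second, coordinates). Hence the pushed-off longitude, based along the meridian's segment, is exactly $x$. The orientation-preserving permutation $(a,b,c)\mapsto(c,a,b)$ fixes $e$, carries $C_x\to C_y\to C_z$ and $x\to y\to z$, and so gives the other two cases.
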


 \color{black} 
\section{Luttinger surgery on $T_x$ and $T_y$ inside $Y\times S^1$ and $\TT^4$} \label{luts}

We consider only the following  torus surgeries  on $T_x$ and $T_y$.  See Appendix \ref{CP} for the definition of torus surgery in a 4-manifold. 

\begin{df}\label{deflut}Let $(Y\times S^1)_\sur$ denote the smooth compact oriented 4-manifold  with one boundary component obtained from  $Y^c\times S^1$ by 
\begin{enumerate}
\item attaching a copy of $D^2\times \TT^2$ to $Y^c\times S^1$ along the boundary component $\partial \Nbd{T_x}$ in such a way that 
$$\partial (D^2\times\{p\})=x \bar \mu_x$$  in $\pi_1(\partial \Nbd{T_x})\cong H_1(\partial \Nbd{T_x})=\ZZ\langle x, t, \mu_x\rangle$, and
\item attaching a copy of $D^2\times \TT^2$ to $Y^c\times S^1$ along the boundary component  $\partial \Nbd{T_y}$ in such a way that 
$$\partial (D^2\times\{p\})=t \bar\mu_y$$  in $\pi_1(\partial \Nbd{T_y})\cong H_1(\partial \Nbd{T_y})=\ZZ\langle y, t, \mu_y\rangle$.
\end{enumerate}

Denote by $\TT^4_\sur$ the closed 4-manifold obtained from $(Y\times S^1)_\sur$ by replacing $\Nbd{T_z}$ (using the identity gluing map).  
\end{df}

We say $(Y\times S^1)_\sur$  {\em is obtained by Luttinger surgery  on $Y\times S^1$ along $ T_x\sqcup T_y $},   and  $\TT^4_\sur$ {\em is obtained by Luttinger surgery on $\TT^4$ along $ T_x\sqcup T_y $.}  (These two manifolds admit a symplectic structure which agrees with the standard one on $\TT^4$ outside $\Nbd{T_x\sqcup T_y}$  \cite{luttinger}.\footnote{Other torus surgeries produce symplectic manifolds, but this pair,  referred to in the literature as {\em  $-1$ Luttinger surgery along $x$ in $T_x $ and $-1$ Luttinger surgery along $t$ in $T_y $},  suffices for the constructions in this article.})

\medskip

By the SVK theorem, a presentation of the fundamental group of $(Y\times S^1)_\sur$ is obtained from the presentation of $\pi_1(Y^c\times S^1)$ given in Corollary \ref{cor2}.
by adding   
\begin{equation}\label{Sur}{\rm Sur}:=\{  x\bar\mu_x ,t\bar\mu_y\}=\{  x[\bar y, z] ,t[\bar z ,x] \}\end{equation}
to the list of relations (see Appendix \ref{CP}). 
 \medskip
 
\noindent{\em Notation.} In order to further consolidate notation, combine (\ref{Bor}), (\ref{tCen}), and (\ref{Sur}) and set 
\begin{equation}\label{Rel}{\rm Rel} :={\rm Bor} \cup {\rm tCen}\cup{\rm Sur},\end{equation}
 an eight-element subset of the free group on $x,y,z,t$.

\begin{prop} \label{prop2.1}
$$\pi_1((Y\times S^1)_\sur)=\langle x,y,z,t\mid {\rm Rel}\rangle\text{ and }\pi_1(\TT^4_\sur)=\langle x,y,z,t\mid [y,\bar x],~{\rm Rel}\rangle.$$ 
\qed
\end{prop}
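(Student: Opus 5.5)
The plan is to apply the Seifert–Van Kampen theorem twice, starting from the presentation of $\pi_1(Y^c\times S^1,e)$ in Corollary \ref{cor2}.

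\textbf{Step 1: the Luttinger surgeries.} $(Y\times S^1)_\sur$ is obtained from $Y^c\times S^1$ by gluing two copies of $D^2\times\TT^2$ along $\partial\Nbd{T_x}$ and $\partial\Nbd{T_y}$. I will view each attachment as first attaching a $2$-handle along the curve $\partial(D^2\times\{p\})$, followed by a $3$-handle and a $4$-handle: $D^2\times \TT^2$ is $D^2\times S^1\times S^1$ with its cell structure relative to $\partial D^2\times\TT^2$. Only the $2$-handle changes $\pi_1$. Its effect is to kill the image of the attaching curve, which is the based loop $x\bar\mu_x$ (resp.\ $t\bar\mu_y$) after conjugating by the short basing segment from Corollary \ref{cor2}. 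Since we are adding a normal-closure relation, the choice of basing path is irrelevant. Equivalently, SVK with intersection $\TT^3$ gives the amalgamation of $\pi_1(Y^c\times S^1)$ with $\pi_1(D^2\times\TT^2)=\ZZ^2$ over $\ZZ^3$. Because $\ZZ^3\to\ZZ^2$ is onto with kernel generated by the meridian curve, this amalgamation is just the quotient by that curve.

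Substituting $\mu_x=[z,\bar y]$ and $\mu_y=[x,\bar z]$ from Corollary \ref{cor2} produces the relations in Sur. With Bor and tCen, these give exactly $\langle x,y,z,t\mid{\rm Rel}\rangle$.

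\textbf{Step 2: refilling $\Nbd{T_z}$.} Gluing back $\Nbd{T_z}\cong D^2\times\TT^2$ by the identity map kills the meridian $\mu_z$. The same argument as in Step 1 applies, now for the boundary component $\partial\Nbd{T_z}$, whose fundamental group is generated by $z,\mu_z,t$. By Corollary \ref{cor2}, $\mu_z=[y,\bar x]$, so we add the relation $[y,\bar x]$. This yields the stated presentation of $\pi_1(\TT^4_\sur)$.

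\textbf{Main obstacle.} The only delicate point is checking that the attaching curves really are the stated words in the based fundamental group. This means tracking the basing segments from $\partial\Nbd{T_x}$ and $\partial\Nbd{T_y}$ to $e$. It also means confirming that the longitudes $x$ and $t$ on $\partial\Nbd{T_x}$ and $\partial\Nbd{T_y}$, as pushed off, represent the generators $x$ and $t$ of $\pi_1(Y^c\times S^1)$. Corollary \ref{cor2} asserts exactly these identifications of the boundary subgroups, so the step reduces to citing it together with the definition of torus surgery in Appendix \ref{CP}.
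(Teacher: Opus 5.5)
Your proposal is correct and follows the paper's route: SVK with the handle description of torus surgery from Appendix \ref{CP}, which adds the words $x\bar\mu_x$ and $t\bar\mu_y$, rewritten via Corollary \ref{cor2} as Sur, and then $\mu_z=[y,\bar x]$ when $\Nbd{T_z}$ is refilled by the identity. Your remark that the pushout over the surjection $\ZZ^3\to\ZZ^2$ is just the quotient by the meridian curve, independent of basing path, is exactly the justification the paper leaves implicit.
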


\noindent{\bf Remark.} The reader can check that the 
assignment $x\mapsto-\bbj, y\mapsto e^{-\tfrac\pi 4 \bbj},~
z\mapsto -\bbi, ~t\mapsto -1$ defines a non-abelian $SU(2)$ 
representation of $\pi_1(\TT^4_\sur)$.  Hence $\pi_1((Y\times S^1)_\sur)$ and $\pi_1(\TT^4_\sur)$ are non-abelian.
Taking the quotient by the commutator subgroup reveals that
$H_1((Y\times S^1)_\sur)=H_1(\TT^4_\sur)\cong \ZZ^2$, generated by the loops $y$ and $z$, with  $x=0$ and $t=0$. 
 
 \section{Notation, base points, and framing a perturbed surface}\label{sec5}

\subsection{Three copies of $\TT^4_\sur$} A manifold $P$ is constructed below  from two copies  
of $\TT^4_\sur\setminus\Nbd{T_z}$. In a later section, $P$ is joined to $S=\TT^4_\sur\#2\overline{\CP }^2$. To distinguish the required three copies of   $\TT^4_\sur$ we write $$\TT^4_\sur, \TT^{4 \prime}_\sur, \text{ and }\TT^{4 \prime\prime}_\sur,$$  and write $x',y',z',t',\mu_x',\mu_y',\mu_z'$ (resp. $x'',y'',z'',t'',\mu_x'',\mu_y'',\mu_z''$) for the loops $x,y,z,t,\mu_x,\mu_y ,\mu_z $ in $\TT^{4\prime}_\sur$  (resp. $\TT^{4\prime \prime}_\sur $).  Similarly we let $T_x', T_y', T_z' , E'$ (resp.
$T_x'', T_y'', T_z'' , E''$) denote the corresponding tori in  
$\TT^{4 \prime}_\sur$ (resp. $  \TT^{4 \prime\prime}_\sur$), and $\rho_1',\rho_1''$ the corresponding line segments from $e'$ to $e_z'$ and $e''$ to $e_z''$.  

\medskip

The manifold $P$ is constructed using the copies $\TT^{4 \prime}_\sur$ and $\TT^{4 \prime\prime}_\sur$, and the manifold $S$ using $\TT^{4}_\sur$. In Section \ref{DA} below, the manifold $S$ is not used, but a different manifold $R=\TT^4_\sur\#\bCP^2$ is used instead.

\subsection{Moving base points}

The base point $e'$ does not lie on the boundary of $\Nbd{T_z'}$, but $e_z'=(-\tfrac\pi 2 +\tfrac{\ep}{\sqrt{2}},\tfrac\pi 2 - \tfrac{\ep}{\sqrt{2}},0,0) $ does.   The line segment  $\rho_1'$   joins  $e' $ to $e_z'$ in $\TT^{4\prime}_\sur$, and defines the base point moving isomorphism 
$$
\pi_1(\TT^{4\prime}_\sur\setminus \Nbd{T_z},e'_z)\to 
\pi_1(\TT^{4\prime}_\sur\setminus \Nbd{T_z},e'),
\gamma\mapsto \rho_1'\gamma\bar\rho_1'.$$
 In this section, we apply this isomorphism implicitly, that is,  we use the notation 
$x',y',z',t', \mu_z' $ instead of  
$\rho_1'x'\bar\rho_1',\rho_1'y'\bar\rho_1',\rho_1'z'\bar\rho_1',t', \rho_1'\mu_z' \bar\rho_1'$. 
The same comment applies to  $e'', e_z''$ and the loops $ x'', y'',\mu_z''$ in $ \TT^{4\prime\prime}_\sur.$

\subsection{Perturbing a framed surface in a 4-manifold}\label{pushoff2}  Suppose that 
$F\subset X$ is a closed oriented smooth surface
in an oriented riemannian 4-manifold $X$ satisfying $F\cdot F=0$.

 Fix a  vector field $\nu:F\to T_*X$ along $F$ so that its orthogonal projection to $\nu(F,X)=T_*F^\perp$ is nowhere zero.  Such a vector field is called {\em a transverse vector field to $F$} below. \color{black} For any $\delta>0$,  set 
$$F_{\nu,\delta}=\exp(\delta\nu(F)),$$ the  {\em $\delta$-pushoff of  $F$ in $X$  using $\nu$}. When $\nu$ is clear from context we write $F_\delta$.  The tubular neighborhood theorem implies that there exists a $\delta_0$ so that $F_\delta$ is embedded for all $0<\delta\leq \delta_0$.

In the following lemma,  $D^2$ means the unit disk in $\CC$.

\begin{lem}\label{pushoff} Given $F\subset X $  a closed oriented surface in a riemannian 4-manifold and a transverse vector field $\nu$ to $F$ satisfying $\|\nu(f)\|<2$ for all $f\in F$, then for all  $\delta$ small enough, there exists a  smooth embedding $\tilde h:F\times D^2\hookrightarrow X$ so that 
\begin{enumerate}
\item $\tilde h(F,0)=F_{\nu,\delta}$,  and hence $\tilde h(F\times D^2)$ is a tubular neighborhood of $F_{\nu,\delta}$.
\item  $\tilde h(f,1)=f $ for $f\in F$, and hence $F$ lies on the boundary of the tubular  neighborhood of  $F_{\nu,\delta}$. \end{enumerate}
 \end{lem}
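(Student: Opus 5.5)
The plan is to write the embedding down explicitly using the exponential map together with an auxiliary normal framing. I interpolate between the pushoff $F_{\nu,\delta}$ and $F$ itself along a family of normal disks centered on the pushoff.

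\textbf{Setup.} Since $F\cdot F=0$, the normal bundle $\nu(F,X)$ is trivial. Let $\nu^\perp$ be the orthogonal projection of $\nu$ to $\nu(F,X)$; it is nowhere zero. Normalizing $\nu^\perp$ and completing it by a rotation through $\pi/2$ (using the orientations of $F$ and $X$) gives an orthonormal normal frame $(n_1,n_2)$ along $F$. Write $\nu=\tau+\nu^\perp$ with $\tau$ tangent to $F$, and identify $D^2\subset\CC$ with $\{a+b\bbi\}$.

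\textbf{Candidate map.} For $f\in F$ and $w\in D^2$ define
$$\tilde h(f,w)=\exp_f\big(\delta\nu(f)\,(1-w)+\delta\,\lambda(w)\,\big(\mathrm{Re}(w)\,v_1(f)+\mathrm{Im}(w)\,v_2(f)\big)\big).$$
The exact choice will be adjusted so that two conditions hold. At $w=0$ the argument must be $\delta\nu(f)$, and at $w=1$ it must be $0$. The simplest version is
$$\tilde h(f,w)=\exp_f\big(\delta(1-a)\nu(f)+\delta b\,c\,n_2(f)\big),\qquad w=a+b\bbi,$$
with $c>0$ a constant. Then $\tilde h(f,0)=\exp_f(\delta\nu(f))$, which parametrizes $F_{\nu,\delta}$, giving (1). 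Also $\tilde h(f,1)=\exp_f(0)=f$, giving (2). The vectors $\nu(f)$ and $n_2(f)$ span a 2-plane whose projection to the normal plane is all of $\nu_f(F,X)$, because $\nu^\perp$ and $n_2$ are independent.

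\textbf{Embedding.} It remains to show that $\tilde h$ is an embedding for $\delta$ small. Rescale: $\tilde h(f,w)=\exp_f(\delta\,L_f(w))$, where $L_f:\RR^2\to T_fX$ is affine and the map $(f,u)\mapsto(f,\text{normal part of }L_f(u))$ is a diffeomorphism onto its image. I would compare $\tilde h$ with the map $\Phi_\delta(f,\xi)=\exp_f(\delta\xi)$ on $TX|_F$, restricted to the subbundle spanned by $\nu$ and $n_2$. That subbundle is transverse to $TF$, and $\Phi_\delta$ restricted to a fixed compact neighborhood of the zero section in it is an embedding for small $\delta$. This is the standard tubular neighborhood argument: the differential along the zero section is an isomorphism (tangent directions from $F$, normal directions from the transverse subbundle), and injectivity follows by compactness together with a Lebesgue-number type argument, the usual proof of the tubular neighborhood theorem, which I would cite. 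The hypothesis $\|\nu\|<2$ ensures that the image of $D^2$ under the $L_f$ lies in a fixed compact region independent of $f$, so a single $\delta_0$ works uniformly. Composing the fiberwise affine injection $w\mapsto L_f(w)$ with this embedding yields an embedding $F\times D^2\hookrightarrow X$.

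\textbf{Main obstacle.} The delicate step is uniformity in the injectivity argument. The disks are not centered on $F$ but pass through it at $w=1$, so injectivity has to be established on a neighborhood of the zero section containing all the segments from $0$ to $\nu(f)$. This is exactly where the bound $\|\nu\|<2$ and the scaling by $\delta$ enter: after rescaling, everything lies in a compact set of the transverse bundle on which $\exp$ is an embedding for small $\delta$.
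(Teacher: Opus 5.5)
Your proposal is correct and is essentially the paper's proof. Your $n_2$ is, up to sign, the paper's unit section $w$ of $\mathrm{Span}\{T_fF,\nu(f)\}^\perp$. The paper's formula is $\tilde h(f,re^{\theta\bbi})=\exp\big(\delta(1-r\cos\theta)\nu(f)-\delta r\sin\theta\, w(f)\big)$, and embeddedness comes, as in your argument, from $\exp$ embedding a neighborhood of the zero section of the transverse bundle $W=\mathrm{Span}\{\nu,w\}$. There $\|\nu\|<2$ together with $\nu\perp w$ gives the explicit radius $4\delta$.

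Watch the sign of $c$: the paper's minus sign makes $\tilde h$ orientation preserving when $(T_*F,\nu,w)$ is positively oriented. The lemma as stated does not need this, but the orientation-preserving version is what is used later in Lemma~\ref{lem1.3}.
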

 \begin{wrapfigure}[8]{r}{0.2\textwidth}
\begin{center}
\vskip-.2in
\includegraphics[width=2in]{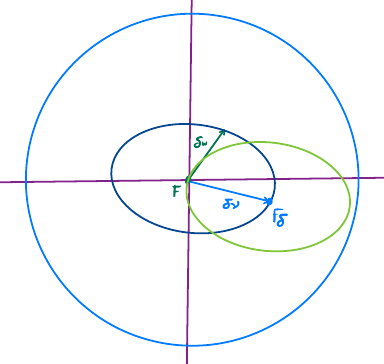}
 \vskip-.1in
\caption{\label{fig54fig}}
\end{center}
\end{wrapfigure}
\noindent{\em Proof.} 
 For each $f\in F$, the span $V_f$ of $\nu(f)$ and $T_fF$ in $T_fX$ is an oriented 3-dimensional subspace, and in fact, $\{V_f\}:={\rm Span}\{T_fF,\nu(f)\}\}_{f\in F}$ forms a smooth 3-dimensional subbundle $V\to F$ of $T_*X|_F\to F$. Orientability of $F$ and $X$ implies that the orthogonal complement $V^\perp\subset T_*X|_F$ is a trivial line bundle.  Choose unit length smooth section $w:F\to V^\perp$ so that $(T_*F, \nu,w)$ correctly orients $ T_*X|_F$.   
 
 Set $W\to F$ to be the trivial 2-dimensional subbundle of $ T_*X|_F$ spanned by $\nu $ and $w$. Hence $T_*X|_F=T_*F\oplus W$.
Choose $\delta$ small enough so that $\exp$ embeds the $4\delta$ neighborhood  of the zero section of $W$. 
Then set
$$\tilde h(f, re^{\theta\bbi})=\exp\big(
\delta(1-r\cos\theta) \nu(f) - \delta r\sin\theta w(f)
\big)
$$
 (see Figure \ref{fig54fig}). 
Since $\tilde h(f,1)=f$ and $\tilde h (f,0)=\exp(\delta\nu(f))$, it follows that 
$\tilde h (F\times D^2)$ is a tubular neighborhood of $F_{\nu,\delta}$ satisfying the conclusions of the lemma.
\qed
\medskip 
 
 The exponential map for $\TT^4=(\RR/2\pi\ZZ)^4$ is given simply by $\exp(v,w)=v+w$.
In what follows, the riemannian 4-manifolds $X$ we consider contain  an open set $U$ which is also an open subset of $\TT^4$. This implies that  if $F$ is a surface in $X$, then, away from the boundary of $U$, 
$$(F\cap U)_{\nu,\delta}= \{ f + \delta \nu(f)\mid f\in F\cap U\}.$$

\color{black}

\section{The surface sum $P$ of $\TT^{4 \prime}_\sur$  and $\TT^{4\prime\prime}_\sur$ along the tori $T_z'$ and $T_z''$} 

We construct, in Section \ref{bbP} below, a {\em  building block} consisting of a pair $(P,\tilde h_P:\Sigma\times D^2\to P)$, where 
$P$ is the surface sum (see Appendix \ref{CP}) of two copies of $\TT^4_\sur$ along their tori $T_z$, and $\tilde h_P$ is a framed embedding of $\Sigma$ into $P$.

\medskip

 In what follows,  a subset of $\TT^4$ is considered as a subset of  $\TT^4_\sur$ if it  misses $\Nbd{T_x\cup T_y}$.

\subsection{The sum of two copies of $\TT^4_\sur$ along $T_z$} The boundary of $\Nbd{T'_z}$ can be explicitly parameterized as
$$\partial  \Nbd{T_z'}=\{ (-\tfrac\pi 2, \tfrac\pi 2, z, t) +\ep(\cos(\theta-\tfrac{\pi}{4}),\sin(\theta-\tfrac{\pi}{4}),0,0)\mid  z, t,\theta\in S^1\},$$
so that $e'_z$ corresponds to $(z,t,\theta)=(0,0,0)$ and $\partial E_0'$ corresponds to $(z,t,\theta)=(0,0,\theta)$. Similar comments apply to $T_z''$.

The linear isomorphism 
\begin{equation}\label{phi2}\phi(z,t,\theta)=(t, -z, -\theta), \end{equation}   and these parameterizations induce  an orientation reversing diffeomorphism
 $$\phi:\partial\Nbd{T_z'}\to \partial\Nbd{T_z''}$$
 which takes $e_z'$ to $e_z''$ and induces
\begin{equation}\label{phi} \phi(\mu_z')=\bar\mu_z'', ~\phi(z')=t'', ~ \phi(t')=\bar z''\end{equation}
on (the abelian) fundamental groups.\footnote{One can take $\phi$ to be the block sum of $(-1)$ and any matrix in $SL_2(\ZZ)$ and obtain a symplectic manifold. The choice (\ref{phi}) suffices for the constructions of this article.}
By construction,  $\phi(\partial E_0')=\overline{\partial E_0}''$.

\begin{df}\label{P}Define the {\em  surface sum  of $\TT^{4 \prime}_\sur$  and $ \TT^{4 \prime\prime}_\sur$ along $T_z'$ and $T_z''$} to be the closed (symplectic) manifold
 \begin{align*}
 P &:=\TT^{4 \prime}_\sur\#_{\phi} \TT^{4 \prime\prime}_\sur  =\big(\TT^{4 \prime}_\sur \setminus\Nbd{T_z'}\big)\cup_{\phi}\big(\TT^{4 \prime\prime}_\sur \setminus\Nbd{T_z''}\big).\end{align*}
\end{df}

\begin{lem}
 \label{lem4A} $\pi_1(P, e_z')$ has the presentation 
 $$
 \langle z', x' ,y',t' ,
 z'', x'', y'',t''
 \mid  {\rm Rel}',~   {\rm Rel}'',~{\rm Gl}  \rangle,
$$
 where \begin{equation}\label{Gl}
{\rm Gl}=\{ [y',\bar x'][y'',\bar x''], ~ z'\bar t'', ~ t'z'' \}.\end{equation}

\end{lem}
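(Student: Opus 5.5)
The plan is to apply the Seifert--Van Kampen theorem to the decomposition in Definition \ref{P}, gluing $A'=\TT^{4\prime}_\sur\setminus\Nbd{T_z'}$ to $A''=\TT^{4\prime\prime}_\sur\setminus\Nbd{T_z''}$ along the 3-torus $\partial\Nbd{T_z'}\cong_\phi\partial\Nbd{T_z''}$. We take $e_z'$ as base point, which $\phi$ sends to $e_z''$.

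\textbf{Step 1: the fundamental group of each piece.} The piece $A'$ is exactly $(Y\times S^1)_\sur$, since $Y^c\times S^1$ with only $\Nbd{T_x},\Nbd{T_y}$ filled leaves $\Nbd{T_z}$ removed. By Proposition \ref{prop2.1}, $\pi_1(A',e')=\langle x',y',z',t'\mid {\rm Rel}'\rangle$. Transporting along $\rho_1'$ as in Section \ref{sec5} gives the same presentation based at $e_z'$, and likewise $\pi_1(A'',e_z'')=\langle x'',y'',z'',t''\mid{\rm Rel}''\rangle$.

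\textbf{Step 2: the gluing relations.} The group $\pi_1(\partial\Nbd{T_z'},e_z')\cong\ZZ^3$ is generated by $z',t',\mu_z'$. By Corollary \ref{cor2}, after moving base points via $\rho_1'$, these map to $z',t',\mu_z'=[y',\bar x']$ in $\pi_1(A')$.

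\emph{Checking the basing.} Corollary \ref{cor2} bases the boundary loops using the shortest segment to $e$, while here we use $\rho_1'$ and the loops through $e_z'$ given by the parametrization $(z,t,\theta)$. I must check these agree. The segment $\rho_1$ runs along the diagonal in the $xy$-plane from $e$ to the point of $\partial\Nbd{T_z}$ nearest $e$, so it is that shortest segment. The $z$- and $t$-circles through $e_z'$, conjugated by $\rho_1'$, are homotopic to $z',t'$ by a straight-line homotopy in the $xy$-coordinates that stays away from $C_x,C_y$. The $\theta$-circle is the based meridian $\mu_z'$ of Figure \ref{fig26fig}, and I would check its orientation against the one used there. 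This bookkeeping is the step I expect to need the most care: a wrong orientation or basing would turn $\mu_z''$ into a conjugate or an inverse. Since the target relations involve $\mu$ only up to inversion, and conjugates are harmless once the relation is imposed, the risk is modest.

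\textbf{Step 3: assembling the presentation.} SVK then adds, for each generator $g$ of $\pi_1$ of the 3-torus, the relation $i'(g)=i''(\phi(g))$. Using (\ref{phi}) these are:
\[
\mu_z'=\bar\mu_z'', \qquad z'=t'', \qquad t'=\bar z''.
\]
In words, $[y',\bar x'][y'',\bar x'']=1$, together with $z'\bar t''$ and $t'z''$. These are exactly ${\rm Gl}$.

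The relations among $z',t',\mu_z'$ that hold in the 3-torus (their commutators) are already consequences of the presentations of the pieces, so no further relations are needed. Since the 3-torus is connected and both pieces are path connected, SVK gives the stated presentation.
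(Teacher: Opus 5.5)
Your proposal is correct and is the paper's argument. The paper's entire proof is ``SVK theorem'' applied to $P=(\TT^{4\prime}_\sur\setminus\Nbd{T_z'})\cup_\phi(\TT^{4\prime\prime}_\sur\setminus\Nbd{T_z''})$, with Proposition \ref{prop2.1} presenting the pieces, $\mu_z=[y,\bar x]$ from Corollary \ref{cor2}, and (\ref{phi}) supplying the three gluing relations. Your base-point and orientation bookkeeping just makes explicit the convention that the paper adopts implicitly in Section \ref{sec5}.
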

\begin{proof}
 SVK theorem.
\end{proof}

It follows that $H_1(P)=\ZZ\langle y', y''\rangle$, since Sur$'$, Sur$''$ imply that, modulo the commutator subgroup,
$x'=1, t'=1, x''=1, t''=1$
and using Gl, $z'=t''=1, z''=\bar t'=1$.

\subsection{The surface $\Sigma_P$ and its framing}
Since the gluing diffeomorphism $\phi$ of 3-tori  satisfies $\phi(\partial E_0')=\overline{\partial E_0}''$,    $P$ contains the smooth closed oriented genus two surface $$\Sigma_P:=E_0^{'}\cup_{\phi|_{\phi(\partial E_0')}}E_0^{''}.$$ 
This surface is illustrated in Figure \ref{fig62fig}.
\begin{figure}
 \labellist
 \small\hair 2pt
  \endlabellist
\centering
\includegraphics[width=4in]{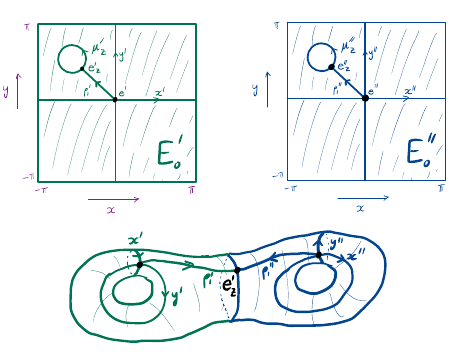}
 \vskip-2ex
 \caption{\label{fig62fig} The surface $\Sigma_P$ }\end{figure}

The coordinate tangent vectors $\tfrac{\partial}{\partial x},~
\tfrac{\partial}{\partial y},~\tfrac{\partial}{\partial z},$ and $\tfrac{\partial}{\partial t}$ form an orthonormal basis
at the tangent space of any point in $\TT^4$.

\begin{lem}The surface $\Sigma_P$ in $P$ admits a transverse  vector field
 $ \nu_P\colon \Sigma_P\to T_*P$ given by 
 \begin{equation}
 \nu_P(s)=\begin{cases}  \tfrac{\partial}{\partial z}+\tfrac{\partial}{\partial t} & \text{ if } s\in E_0'\\
   \tfrac{\partial}{\partial z}-\tfrac{\partial}{\partial t} & \text{ if } s\in E_0''.
\end{cases} 
 \end{equation}  \end{lem}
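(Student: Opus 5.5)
We must check two things: on each piece the vector field is transverse to $\Sigma_P$ (its normal projection is nowhere zero), and the two pieces define a continuous vector field on $\Sigma_P\subset P$, that is, they agree under the gluing map $\phi$ along $\partial E_0'$.

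\emph{Transversality.} The piece $E_0'$ is contained in $E'=\TT^2\times\{0\}\times\{0\}$ in $\TT^{4\prime}$, with tangent plane spanned by $\tfrac{\partial}{\partial x},\tfrac{\partial}{\partial y}$. The normal plane is therefore spanned by $\tfrac{\partial}{\partial z},\tfrac{\partial}{\partial t}$. Hence $\tfrac{\partial}{\partial z}\pm\tfrac{\partial}{\partial t}$ is itself normal and nonzero, of length $\sqrt2<2$, which is also the hypothesis of Lemma \ref{pushoff}. The same holds on $E_0''$.

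\emph{Compatibility at the seam.} Near $\partial E_0'$, the boundary $\partial\Nbd{T_z'}$ is parameterized by $(z,t,\theta)$ via $(-\tfrac\pi2,\tfrac\pi2,z,t)+\ep(\cos(\theta-\tfrac\pi4),\sin(\theta-\tfrac\pi4),0,0)$. In these coordinates $\tfrac{\partial}{\partial z}$ and $\tfrac{\partial}{\partial t}$ are the coordinate fields of the parameterization. Use a collar $\partial\Nbd{T_z'}\times[0,\eta)$ given by the radial coordinate $r-\ep$, which is orthogonal to $z,t$. The gluing $\phi(z,t,\theta)=(t,-z,-\theta)$ then extends to an identification of collars that reverses the radial direction, so the two collars fit together into a neighborhood of the seam. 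The differential of $\phi$ sends $\tfrac{\partial}{\partial z'}\mapsto\tfrac{\partial}{\partial t''}$ and $\tfrac{\partial}{\partial t'}\mapsto-\tfrac{\partial}{\partial z''}$. Therefore
$$d\phi\big(\tfrac{\partial}{\partial z'}+\tfrac{\partial}{\partial t'}\big)=\tfrac{\partial}{\partial t''}-\tfrac{\partial}{\partial z''}=-\big(\tfrac{\partial}{\partial z''}-\tfrac{\partial}{\partial t''}\big).$$

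This sign is the step I expect to be the main obstacle. It has to be reconciled with the orientation-reversal in the gluing. The points of $P$ near the seam in the $E_0''$ half correspond to $\TT^{4\prime\prime}$ with the radial collar reversed, and under the composite identification of tangent spaces (orientation reversing, from the induced orientation of $\Sigma_P$ via $\phi(\partial E_0')=\overline{\partial E_0}''$), the $z$–$t$ plane is identified with one sign flip. I would check carefully whether the vector fields match exactly, or match up to a sign that is absorbed by the chosen identification of the normal bundles across the collar. In the latter case one interpolates, on a thin annulus around the seam, along a path in the normal $(z,t)$-plane avoiding $0$. For instance, one rotates $\tfrac{\partial}{\partial z}+\tfrac{\partial}{\partial t}$ to $\tfrac{\partial}{\partial t}-\tfrac{\partial}{\partial z}$ through $\sqrt2$ times the unit vectors in that plane; the normal plane is constant in this flat region, so transversality is preserved.

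Once matching is verified, possibly after such an interpolation, the two pieces glue to a smooth transverse vector field on $\Sigma_P$ with norm less than $2$, as claimed.
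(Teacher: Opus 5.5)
Your transversality check and collar setup are fine. The step that carries the lemma, though, is the matching along the seam, and you miscompute it and then leave it unresolved.

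Differentiate the gluing map (\ref{phi2}) directly. Since $\phi(z,t,\theta)=(t,-z,-\theta)$, the new $z$-coordinate is the old $t$ and the new $t$-coordinate is minus the old $z$. So $d\phi(\tfrac{\partial}{\partial z})=-\tfrac{\partial}{\partial t}$ and $d\phi(\tfrac{\partial}{\partial t})=\tfrac{\partial}{\partial z}$, whence $d\phi(\tfrac{\partial}{\partial z}+\tfrac{\partial}{\partial t})=\tfrac{\partial}{\partial z}-\tfrac{\partial}{\partial t}$ exactly. That is the paper's entire proof: the two formulas agree on the overlap, and no interpolation is needed.

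Your values $\tfrac{\partial}{\partial z'}\mapsto\tfrac{\partial}{\partial t''}$ and $\tfrac{\partial}{\partial t'}\mapsto-\tfrac{\partial}{\partial z''}$ reproduce the $\pi_1$-formulas (\ref{phi}), not the derivative of (\ref{phi2}). As printed, (\ref{phi}) does not agree with what (\ref{phi2}) induces on the $(z,t)$-factor. The loop $s\mapsto(s,0,0)$ goes to $s\mapsto(0,-s,0)$, i.e.\ $z'\mapsto\bar t''$. So (\ref{phi}) cannot be used to compute $d\phi$; the gluing is the map (\ref{phi2}).

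Your proposed way around the sign does not work either. The fields $\tfrac{\partial}{\partial z},\tfrac{\partial}{\partial t}$ are tangent to the gluing $3$-torus. A vector field is therefore continuous across the seam exactly when $d\phi$ carries its value on the $E_0'$ side to its value on the $E_0''$ side. The orientation reversal of $\phi$ lives in the factor $\theta\mapsto-\theta$ and in the flip of the radial collar; it contributes no further sign in the $(z,t)$-plane, and nothing is ``absorbed by the identification of normal bundles.''

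So if your sign were right, the piecewise field in the statement would simply be discontinuous, and the lemma would be false as stated. Interpolating on an annulus around the seam would then give some other transverse field. That field is not $\tfrac{\partial}{\partial z}+\tfrac{\partial}{\partial t}$ on all of $E_0'$ and $\tfrac{\partial}{\partial z}-\tfrac{\partial}{\partial t}$ on all of $E_0''$. That exact form is what is used next to write $\Sigma_{P,\delta}=E_0\times(\delta,\delta)\cup E_0\times(\delta,-\delta)$ in Theorem \ref{thmP}(4).

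As it stands, the proposal ends with ``I would check carefully,'' so it does not prove the statement. With the derivative computed correctly, it reduces to the paper's one-line argument.
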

 \begin{proof} The gluing diffeomorphism
 $\phi:\TT^3\to \TT^3$ (\ref{phi2}) satisfies $d\phi(\tfrac{\partial}{\partial z}+\tfrac{\partial}{\partial t})=\tfrac{\partial}{\partial z}-\tfrac{\partial}{\partial t}$, and so the two definitions agree on the overlap.
 \end{proof}
 Choose $0<\delta<\tfrac{\ep^2} 8$ small enough so that the $\delta$-pushoff,  $\Sigma_{P,\delta}$, of $\Sigma_P$ using $\nu_P$ is embedded. 
Then $$\Sigma_{P,\delta}=E_{0,\delta}'\cup E_{0,\delta}'',$$ where, since $Y^c\times S^1\subset \TT^4,$ $$
E_{0,\delta}'=E_0\times (\delta,\delta)\text{ and }E_{0,\delta}''=E_0\times ( \delta,-\delta)$$
(see Section \ref{pushoff2}).

   \begin{cor}
  The loops $x', y', z', t', \mu_x',\mu_y',\mu_z'$ in $\TT^{4\prime}_\sur $ and $x'', y'', z'', t'', \mu_x'',\mu_y'',\mu_z''$ in $\TT^{4\prime\prime}_\sur $
 are disjoint from   $\Sigma_{P,\delta}$.
  \end{cor}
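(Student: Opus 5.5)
Since every point of $\Sigma_{P,\delta}$ has the form $f+\delta\nu_P(f)$ with $f\in E_0'$ or $E_0''$, the plan is to show that every point of $\Sigma_{P,\delta}$ has $z$-coordinate with $|z|=\delta$ and $t$-coordinate with $|t|=\delta$. Then it suffices to check that none of the listed loops passes through such a point. Explicitly, $E_{0,\delta}'=\{(x,y,\delta,\delta)\}$ and $E_{0,\delta}''=\{(x,y,\delta,-\delta)\}$, intersected with the complements of the tubular neighborhoods. So we verify, loop by loop, that each loop has either $z\neq\delta$ or $t\notin\{\pm\delta\}$ at every parameter value. We work in the fundamental domain $[-\pi,\pi]^4$ and use the explicit coordinates from Figure~\ref{fig26fig} and Section~\ref{sect3}.

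\textbf{Loops in $\TT^3\times\{0\}$.} The loops $x$, $y$, $z$, $\mu_x$, $\mu_y$, $\mu_z$ lie in $Y^c\times\{0\}$, so they have $t=0$. Since $0<\delta$, they miss both $E_{0,\delta}'$ and $E_{0,\delta}''$, whose points have $t=\pm\delta\neq0$. This includes their basing segments, which also lie in $t=0$.

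The base point is actually $e_z'$, with the segment $\rho_1'$, and $\rho_1'$ also lies in $\{z=t=0\}$. So the conjugated loops $\rho_1'\gamma\bar\rho_1'$ are covered by the same argument.

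\textbf{The loop $t$.} The loop $t(\theta)=(0,0,0,\theta)$ has $z$-coordinate $0\neq\delta$, so it also misses $\Sigma_{P,\delta}$.

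\textbf{Pieces near $\partial\Nbd{T_z}$.} The same reasoning applies in $\TT^{4\prime\prime}_\sur$. There is one further check: near the gluing region the pushoff is described via the identification $\phi$, rather than by the formula in $\TT^4$. Near $\partial\Nbd{T_z'}$, however, the listed loops are just the meridian $\mu_z'$ and the segment $\rho_1'$, both of which lie in $z=t=0$. The pushed-off surface near there still has $(z,t)=(\delta,\pm\delta)$ in the respective sides, because $d\phi$ matches the vector fields. So no intersection arises.

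\textbf{Main obstacle.} The only delicate point is confirming that the pushoff formula $f+\delta\nu(f)$ is valid all the way up to the boundary tori. This is handled by the proof of the vector-field lemma, which shows $d\phi$ preserves $\nu_P$. As a result, in each copy the pushoff is literally the translate by $(0,0,\delta,\pm\delta)$, restricted to the complement of $\Nbd{T_z}$.
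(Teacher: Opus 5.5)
Your proposal is correct and uses the same coordinate comparison as the paper. Every listed loop except $t',t''$ has fourth coordinate $0$ while $\Sigma_{P,\delta}=E_0\times(\delta,\delta)\cup E_0\times(\delta,-\delta)$ has fourth coordinate $\pm\delta$, and $t',t''$ have third coordinate $0$ while $\Sigma_{P,\delta}$ has third coordinate $\delta$; your extra checks on the basing segment $\rho_1'$ and the gluing region are harmless additions to the paper's argument.
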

  
\begin{proof} With the exception of $t'$ and $t''$, these loops all have fourth coordinate equal to $0$. On the other hand  each half of 
$\Sigma_{P,\delta}=E'_{0,\delta}\cup E''_{0,\delta}$ has non-zero fourth coordinate. 
Similarly,  $t'$  and $t''$ have zero third coordinate and hence miss $E_{0,\delta}'$ and $E_{0,\delta}''$.
\end{proof}

\begin{lem} \label{lem1.3} There is a smooth orientation preserving embedding $\tilde h_P \colon  \Sigma \times D^2 \hookrightarrow P$  such that 
\begin{enumerate}
\item $\tilde h_P(\Sigma\times 1)=\Sigma_P$.
\item $\tilde h_P(\Sigma\times 0)=\Sigma_{P,\delta}$.
\item $\tilde h_P(\sigma,1)=e_z'$.
\item The composite $$ \pi_1(\Sigma,\sigma)\xrightarrow{\tilde h_P(-,1)}
\pi_1(\Sigma_P, e_z')
\xrightarrow{i_P} 
\pi_1(P, e_z') 
$$ is given by
$$  
a_1\mapsto y', ~ b_1\mapsto  \bar x', ~ a_2\mapsto y'', ~ b_2\mapsto  \bar x''. 
$$
\end{enumerate}

\end{lem}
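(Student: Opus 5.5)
Combine Lemma \ref{pushoff} with Lemma \ref{lem3.1}. The idea is to first find an orientation preserving diffeomorphism $h:\Sigma\to\Sigma_P$ realizing item (4), and then set $\tilde h_P(s,w)=\tilde h(h(s),w)$, where $\tilde h:\Sigma_P\times D^2\hookrightarrow P$ is the embedding supplied by Lemma \ref{pushoff} for the transverse field $\nu_P$. Since $\|\nu_P\|=\sqrt2<2$, Lemma \ref{pushoff} applies for $\delta$ small. It gives $\tilde h(f,1)=f$ and $\tilde h(\Sigma_P,0)=\Sigma_{P,\delta}$, so items (1) and (2) are automatic. Item (3) and orientation preservation follow once $h(\sigma)=e_z'$ and $h$ preserves orientation, because $\tilde h$ is built from the oriented frame $(T_*F,\nu,w)$.

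To produce $h$, find curves on $\Sigma_P$ satisfying the hypotheses of Lemma \ref{lem3.1}, based at $p_1=e_z'$. On $E_0'=E'\setminus\Nbd{T_z'}\cap E'$, a punctured torus, take $c_1$ and $d_1$ to be the coordinate circles through $e_z'$ in the $y$ and $x$ directions, respectively. These are the lines $x=-\tfrac\pi2+\tfrac{\ep}{\sqrt2}$ (varying $y$) and $y=\tfrac\pi2-\tfrac{\ep}{\sqrt2}$ (varying $x$), oriented as $y'$ and as $\bar x'$.

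There is a subtlety: $e_z'$ lies on $\partial E_0'$, so these lines must be nudged into the interior. Alternatively, take the circles through a nearby interior point $p_1$ joined to $e_z'$ by a short arc; this changes nothing after identifying base points. Through the straight segment $\rho_1'$, these circles are homotopic in $P$ to $y'$ and $\bar x'$.

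Take $c_2$ and $d_2$ to be the analogous circles on $E_0''$, joined by an arc $\rho$ that runs through the annular region near the gluing seam $\partial E_0'\cup_\phi\partial E_0''$ and misses the four circles. Then check orientations: each of $c_1,d_1$ and $c_2,d_2$ should meet in one positive point for the orientation of $\Sigma_P$. This uses that $\phi$ reverses orientation, so $E_0''$ inherits the orientation compatible with $E_0'$. If a sign is off, swap which circle carries the bar.

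The main obstacle is verifying the relation $[c_1,d_1][\rho c_2\bar\rho,\rho d_2\bar\rho]=1$ in $\pi_1(\Sigma_P,p_1)$, together with identifying the images in $\pi_1(P,e_z')$. I would handle this by noting that in $E_0'$ the commutator $[c_1,d_1]$ is the boundary loop $\partial E_0'$, possibly inverted, and likewise in $E_0''$. Since $\phi(\partial E_0')=\overline{\partial E_0''}$, the product is null-homotopic in $\Sigma_P$ for a suitable choice of $\rho$. I would fix $\rho$ to follow the boundary circle from $e_z'$ to $e_z''=\phi(e_z')$, and then adjust the curve orientations by the sign check above.

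For the images in $\pi_1(P)$, the arc $\rho$ from $e_z'$ to $e_z''$ is trivial as a path in the gluing region, since these two points are identified. So $\rho c_2\bar\rho$ and $\rho d_2\bar\rho$ map to $\rho_1''y''\bar\rho_1''$ and $\rho_1''\bar x''\bar\rho_1''$, which the notational convention writes as $y''$ and $\bar x''$. Applying Lemma \ref{lem3.1} then yields $h$ with the required images $a_1\mapsto y'$, $b_1\mapsto\bar x'$, $a_2\mapsto y''$, $b_2\mapsto\bar x''$.
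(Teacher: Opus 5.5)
Your overall route is the paper's: get $h_P$ from Lemma \ref{lem3.1}, then extend it with Lemma \ref{pushoff} applied to $\nu_P$ (note $\|\nu_P\|=\sqrt2<2$), so (1)--(3) are automatic.

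The step that fails is your concrete choice of curves. The coordinate circles through $e_z'$ are $\{x=-\tfrac\pi2+\tfrac{\ep}{\sqrt2}\}$ and $\{y=\tfrac\pi2-\tfrac{\ep}{\sqrt2}\}$ in $E$. Each comes within $\tfrac{\ep}{\sqrt2}<\ep$ of $p=(-\tfrac\pi2,\tfrac\pi2,0,0)$, e.g.\ at $(-\tfrac\pi2+\tfrac{\ep}{\sqrt2},\tfrac\pi2,0,0)$. So both run through the removed disk $E\cap\Nbd{T_z'}$. They are not curves on $E_0'$, and they are not even contained in $\TT^{4\prime}_\sur\setminus\Nbd{T_z'}$, hence not in $P$.

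Nudging to a nearby interior point does not help. Any coordinate circle of $E$ whose fixed coordinate is within $\ep$ of $-\tfrac\pi2$ (resp.\ $\tfrac\pi2$) meets that disk. So the homotopy ``through $\rho_1'$'' you invoke is between loops that do not exist in $P$. Separately, four circles all passing through $e_z'=e_z''$ would violate the disjointness hypothesis of Lemma \ref{lem3.1}. Your arc ``from $e_z'$ to $e_z''$'' also does not join two distinct points $p_1,p_2$ as that lemma requires.

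The repair is the paper's choice. Take $c_1=y'$ and $d_1=\bar x'$, meeting at $p_1=e'$, and $c_2=y''$ and $d_2=\bar x''$, meeting at $p_2=e''$. These stay at distance at least $\tfrac\pi2$ from $p$, so they lie on $E_0'$ and $E_0''$. Take $\rho=\rho_1'\bar\rho_1''$, which passes through $e_z'=e_z''$; its interior misses all four circles.

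The hypothesis of Lemma \ref{lem3.1} then reads $[y',\bar x'][y'',\bar x'']=1$. This follows from three facts:
\begin{enumerate}
\item $[y',\bar x']=\rho_1'(\partial E_0')\bar\rho_1'$, as in the proof of Proposition \ref{lem1.33};
\item the same identity in $E_0''$;
\item $\phi(\partial E_0')=\overline{\partial E_0''}$.
\end{enumerate}
Together these give $\rho[y'',\bar x'']\bar\rho=\rho_1'(\overline{\partial E_0'})\bar\rho_1'$. Afterwards, slide $\sigma$ from $e'$ to $e_z'$ along $\rho_1'$; this is exactly the base-point convention of Section \ref{sec5}.

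Finally, drop the fallback ``if a sign is off, swap which circle carries the bar.'' That would change the images demanded in (4). It is also unnecessary: $y\cdot\bar x=x\cdot y=1$ on $E$, and the orientation-reversing gluing makes the standard orientations of $E_0'$ and $E_0''$ agree on $\Sigma_P$. This gives $y'\cdot\bar x'=1=y''\cdot\bar x''$.
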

\begin{proof} The loops $x',y'$ (resp. $x'', y''$)  based at    $e_z'=e_z''$ lie on the closed genus 2 surface
 $ \Sigma_P,$
and satisfy    $  [y',\bar x'][y'',\bar x'']=1$ and 
$  y'\cdot \bar x'=1=y''\cdot \bar x''$.
 Apply Lemma \ref{lem3.1} to find a diffeomorphism $h_P:(\Sigma,\sigma)\to (\Sigma_P,e_z')$ satisfying $h_P(a_1)=y', ~h_P(b_1)= \bar x', ~h_P(a_2)=y'',h_P(b_2)=\bar x''$.  
\color{black} Equip $\Sigma_P$ with the normal vector field $\nu_P$.  Lemma  \ref{pushoff} 
 shows that $h_P$ extends to an embedding $\tilde h_P:\Sigma\times D^2\hookrightarrow P$ satisfying  (1), (2), (3), and (4).
\end{proof}

Define $\mu_{P,\delta}=\tilde h_P(\sigma\times S^1)$.  Hence 
$\mu_{P,\delta}$ is a meridian of $\Sigma_{P,\delta}$.

\subsection{The building block $(P, \tilde h_P)$}\label{bbP}

 We summarize the properties established above about $$\big(P, \tilde h_P\colon \Sigma\times D^2\hookrightarrow P\big).$$
 
  \begin{thm}\label{thmP}
  The pair $(P, \tilde h_P)$ satisfies \begin{enumerate}
\item $\pi_1(P,e_z')=
 \langle z', x' ,y',t' ,
 z'', x'', y'',t''
 \mid  {\rm Rel}',~   {\rm Rel}'',~{\rm Gl}  \rangle
$ and $H_1(P)\cong \ZZ^2$, generated by $y', y''$.
 \item $\tilde h_P(\Sigma\times\{1\})=\Sigma_P$.
 \item $\tilde h_P(\Sigma\times\{0\})=\Sigma_{P,\delta}.$
 \item   $\Sigma_{P,\delta}=E_{0,\delta}'\cup E_{0,\delta}''=
 E_0\times (\delta,\delta)\cup E_0\times ( \delta,-\delta)$.

  \item The base points $e_z'=e_z''$,  $e'$ and $e''$, as well as the line segments $\rho_1',\rho_1''$ joining $e'$ to $e_z'$ and $e''$ to $e_z''$,     lie  on $\Sigma_P\subset\partial \Nbd{\Sigma_{P,\delta}}$.
 \item  The loops $x',y',  x'', y''$ and $\mu_z$  lie  on $\Sigma_P$ and hence in $P \setminus  \Sigma_{P,\delta} $. Moreover $$[y',\bar x'][y'',\bar x'']=1.$$
  \item The loops $z ', z'',  t',   t'',$  and the meridian $\mu_{P,\delta}$ lie in $P \setminus  \Sigma_{P,\delta}$.
\item The composite $$ \pi_1(\Sigma,\sigma)\xrightarrow{h_P}
\pi_1(\Sigma_P,e_z')\xrightarrow{i_P} 
\pi_1(P, e_z')$$ is given by
$$ 
a_1\mapsto y', ~ b_1\mapsto \bar x', ~ a_2\mapsto y'', ~ b_2\mapsto \bar x ''.
$$
\end{enumerate}\qed
\end{thm}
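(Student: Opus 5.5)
Theorem \ref{thmP} is a summary of what has already been established, so the plan is to assemble it item by item from the preceding lemmas. There is no new construction involved; the only work is bookkeeping about base points and about which loops miss the pushed-off surface.

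\textbf{Item (1).} The presentation is Lemma \ref{lem4A}, which follows from the SVK theorem. Two inputs feed into it:
\begin{enumerate*}[label=(\roman*)]
\item Proposition \ref{prop2.1}, applied to each complement $\TT^{4\prime}_\sur\setminus\Nbd{T_z'}=(Y\times S^1)'_\sur$ and similarly for the double-primed copy;
\item the gluing map (\ref{phi}) on the boundary 3-torus, whose generators $\mu_z,z,t$ are identified to produce ${\rm Gl}$.
\end{enumerate*}
Here $\mu_z'\mapsto\bar\mu_z''$ gives $[y',\bar x'][y'',\bar x'']$, using $\mu_z=[y,\bar x]$ from Corollary \ref{cor2}. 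For the homology, abelianize:
\begin{enumerate*}[label=(\roman*)]
\item ${\rm Sur}$ forces $x=\mu_x$ and $t=\mu_y$, and these are commutators, so they vanish;
\item ${\rm Gl}$ then kills $z'=t''$ and $z''=\bar t'$;
\item the remaining relations are commutators.
\end{enumerate*}
This leaves $\ZZ\langle y',y''\rangle$.

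\textbf{Items (2), (3), (8).} These are exactly Lemma \ref{lem1.3}. That lemma comes from Lemma \ref{lem3.1} applied to the loops $y',\bar x',y'',\bar x''$ on $\Sigma_P$, together with Lemma \ref{pushoff} applied to $\nu_P$. We should check that $\|\nu_P\|=\sqrt2<2$ and that $\delta$ is small, so Lemma \ref{pushoff} applies.

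\textbf{Item (4).} This is the explicit pushoff formula in $\TT^4$, $F_\delta=\{f+\delta\nu(f)\}$. With $\nu_P=\partial_z\pm\partial_t$, it gives $E_0\times(\delta,\pm\delta)$. The formula is valid because the pushoff stays inside the part of $P$ that is an open subset of $\TT^4$.

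\textbf{Item (5).} We check that the points $e_z'$, $e'=(0,0,0,0)$ and the segment $\rho_1'$ all have third and fourth coordinates $0$, so they lie in $E_0'\subset\Sigma_P$. Moreover $\Sigma_P=\tilde h_P(\Sigma\times 1)$ lies on $\partial\Nbd{\Sigma_{P,\delta}}$ by Lemma \ref{pushoff}(2).

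\textbf{Items (6), (7).} The loops $x',y',x'',y''$ have vanishing $z$ and $t$ coordinates, so they lie in $E_0'$ or $E_0''$, hence on $\Sigma_P$. The same holds for $\mu_z$, which lies in the $xy$-plane at $z=t=0$ and is glued consistently. The product relation is one of the relations in ${\rm Gl}$. Disjointness from $\Sigma_{P,\delta}$ is the Corollary preceding Lemma \ref{lem1.3}, which covers $z',z'',t',t''$. The loop $\mu_{P,\delta}=\tilde h_P(\sigma\times S^1)$ misses $\tilde h_P(\Sigma\times 0)$ by injectivity of $\tilde h_P$.

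\textbf{Main obstacle.} The only subtle point is the base-point convention. The loops $x',y',z'$ are implicitly conjugated by $\rho_1'$, so we must confirm that the identifications in ${\rm Gl}$ and in item (8) are made consistently at $e_z'=e_z''$. The gluing (\ref{phi}) fixes this point, and $\rho_1',\rho_1''$ lie on $\Sigma_P$, so this works out.
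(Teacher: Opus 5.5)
Your proposal is correct and matches the paper, which proves Theorem~\ref{thmP} the same way: by collecting Lemma~\ref{lem4A}, the abelianization computation, the pushoff formula for $\nu_P$, the corollary on which loops miss $\Sigma_{P,\delta}$, and Lemma~\ref{lem1.3}. One small tightening: in item (6), the relation $[y',\bar x'][y'',\bar x'']=1$ already holds in $\pi_1(\Sigma_P,e_z')$, because $\partial E_0'$ is glued to $\overline{\partial E_0''}$ and $\mu_z=[y,\bar x]$ in the punctured torus $E_0$; this is stronger than citing ${\rm Gl}$ in $\pi_1(P)$, and it is the form Lemma~\ref{lem3.1} needs.
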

The following lemma is used in the proof of Theorem \ref{trick}.
\begin{lem} \label{Pcomp} The relations 
 $z'=t'',~[t'', y'']=1, ~[t'',x'']=1\text{ and }x'=[z',\bar y']$  hold in $\pi_1(P\setminus \Sigma_{P,\delta})$.
\end{lem}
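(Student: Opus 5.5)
The plan is to realize each of the four relations by an explicit homotopy, or an explicit immersed disk, that stays inside a region of $P$ visibly disjoint from $\Sigma_{P,\delta}=E_0\times(\delta,\delta)\cup E_0\times(\delta,-\delta)$. By Theorem \ref{thmP}(4), every point of $E'_{0,\delta}$ has third and fourth coordinates equal to $(\delta,\delta)$, and every point of $E''_{0,\delta}$ has them equal to $(\delta,-\delta)$. So any subset of the $\TT^{4\prime}_\sur$ or $\TT^{4\prime\prime}_\sur$ side lying in a slice $\{t=0\}$ or $\{z=0\}$ misses the pushoff. Throughout, base points are moved along $\rho_1',\rho_1''$. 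These segments lie in $\{z=t=0\}$, so they also miss $\Sigma_{P,\delta}$, and conjugating by them is harmless.

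\emph{The relation $x'=[z',\bar y']$.} Work in the slice $Y^c\times\{0\}\subset \TT^{4\prime}_\sur\setminus\Nbd{T_z'}$, which misses $E'_{0,\delta}$ because the latter has $t=\delta\neq 0$. Proposition \ref{thm1} is a statement about $\pi_1(Y^c)$, so the identity $\mu_x'=[z',\bar y']$ holds via a homotopy inside $Y^c\times\{0\}$. Next, the surgery relation $x'\bar\mu_x'$ is witnessed by the disk $D^2\times\{p\}$ in the glued-in copy of $D^2\times\TT^2$, together with annuli in a collar of $\partial\Nbd{T_x'}$. All of this lies within distance about $\ep$ of $T_x'$, whose third coordinate is $\tfrac\pi2$. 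This is far from the third coordinate $\delta$ of $E'_{0,\delta}$, so it misses the pushoff. Combining the two gives $x'=\mu_x'=[z',\bar y']$ in the complement.

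\emph{The relations $[t'',y'']=1$ and $[t'',x'']=1$.} Use the coordinate tori $\{(0,y,0,t)\}$ and $\{(x,0,0,t)\}$ in $\TT^{4\prime\prime}$. Checking against (\ref{CxCy}) and the definition of $T_z$, neither torus meets $\Nbd{T_x''\cup T_y''\cup T_z''}$, because the relevant coordinates differ from $\pm\tfrac\pi2$ by far more than $\ep$. Both tori have third coordinate $0\neq\delta$, so they miss $E''_{0,\delta}$. Each torus is a map of $\TT^2$ into the complement containing the based loops $t''$ and $y''$ (respectively $x''$), so these pairs commute there.

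\emph{The relation $z'=t''$.} This is the step needing the most care, since the gluing region $\partial\Nbd{T_z'}$ actually meets the pushoff. In the coordinates $(z,t,\theta)$ on $\partial\Nbd{T'_z}$, the curve $\partial E'_0$ is $\{(0,0,\theta)\}$. Near there, $\Sigma_{P,\delta}$ meets the boundary torus (after a small adjustment) in a $\theta$-circle with $t$-coordinate $\delta$. The loop $z'$ is homotopic, through the region $\{t=0\}$ (which misses $E'_{0,\delta}$), to the boundary loop $\theta=0,\ t=0$ traced by $z$. This loop misses the pushoff circle because $t=0\neq\delta$. By (\ref{phi}), $\phi$ carries it identically to the boundary loop representing $t''$ on the other side. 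That loop is in turn homotopic to $t''$ through $\{z=0\}$ on the $\TT^{4\prime\prime}$ side, avoiding $E''_{0,\delta}$ (third coordinate $\delta$).

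The main obstacle I expect is the bookkeeping in this last step. One must verify that the pushoff indeed meets $\partial\Nbd{T_z'}$ only near $t=\delta$ (respectively $z=\delta$ after $\phi$), in the description from Lemma \ref{pushoff}. One must also check that the connecting homotopies near $e_z'$ stay out of the $\delta$-neighborhood of $\partial E'_0$, which is where the two halves of $\Sigma_{P,\delta}$ are joined.
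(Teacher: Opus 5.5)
Your arguments for $[t'',x'']=1$ and $[t'',y'']=1$ are the paper's: coordinate tori through $e''$ with third coordinate $0$. Your argument for $z'=t''$ is the paper's gluing argument, with the connecting homotopies through $\{t=0\}$ and $\{z=0\}$ made explicit. No ``small adjustment'' is needed there: by Theorem~\ref{thmP}(4), $\Sigma_{P,\delta}$ meets the gluing 3-torus exactly in the circle $(z,t)=(\delta,\delta)$. Using the whole slice $Y^c\times\{0\}$ to get $\mu_x'=[z',\bar y']$ is also fine, and is equivalent to the paper's torus $\{0\}\times\TT^2\times\{0\}$.

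The gap is in the step $x'=\mu_x'$. The disk $D^2\times\{p\}$ in the glued-in $D^2\times\TT^2$ only shows $\lambda_x=\mu_x'$. Here $\lambda_x$ is the pushed-off $x$-circle of $T_x'$ on $\partial\Nbd{T_x'}$, based along the same segment from $e'$ as $\mu_x'$. The relation you need concerns the loop $x'(\theta)=(\theta,0,0,0)$ through $e'$, which is at distance $\pi/\sqrt2$ from $T_x'$. So it is false that ``all of this lies within distance about $\ep$ of $T_x'$'': you still need a based homotopy from $x'$ to $\lambda_x$ inside $P\setminus\Sigma_{P,\delta}$.

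Corollary~\ref{cor2} and Definition~\ref{deflut} identify the generator called $x$ in $\pi_1(\partial\Nbd{T_x})$ with the core loop $x$ only in $Y^c\times S^1$. Checking that this identification survives removal of $E'_{0,\delta}$ is exactly the kind of thing the lemma asserts. You handled the analogous issue for $z'$ in the gluing step, but not here.

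The paper closes this with the annulus $A_1=\{(x,-u,u,0)\}$. It sweeps $x'$ out to $\lambda_x$ along the basing segment, stays at distance at least $\ep$ from $T_x',T_y',T_z'$, and lies in $\{t=0\}$, so it misses $E'_{0,\delta}$. Since $A_1$ lies in your slice $Y^c\times\{0\}$, your own framework fills the gap immediately. With that addition your proof is complete and essentially the same as the paper's.
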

\begin{proof}  First, the torus $S^1\times\{(0,0)\}\times S^1$ misses $E_{0,\delta}''$ in $\TT^{4\prime\prime}_\sur\setminus \Nbd{T_z},$  since the third coordinates differ (see Theorem \ref{thmP} (4)). 
This proves that $[t'',x'']=1$ in $\pi_1(\TT^{4\prime\prime}_\sur\setminus \Nbd{E_{0,\delta}'' \cup T_z''}),$ and hence also in $\pi_1(P\setminus \Sigma_{P,\delta})$.  Similarly, the torus $\{0\}\times S^1\times\{0\}\times S^1$ misses $E_{0,\delta}''$ and hence $[t'',y'']=1$ in $\pi_1(P\setminus \Sigma_{P,\delta})$.

The gluing relation $z'=t''$   holds in $\pi_1(P\setminus \Sigma_{P,\delta})$ because the loop $z'$ lies in   the boundary of  $\TT^{4\prime}_\sur\setminus \Nbd{E_{0,\delta}'\cup T_z'}$   and the loop $t''$ lies in   the boundary of  $\TT^{4\prime\prime}_\sur\setminus \Nbd{E_{0,\delta}''\cup T_z''}$ by Theorem \ref{thmP} (4), and these loops are identified by the gluing map $\phi$.

The annulus 
 $$A_1:=\{(x, -u, u,0)\mid x\in S^1, 0\leq u\leq \tfrac\pi 2-\tfrac{\sqrt{\ep}}{2}\}\subset \TT^{3'}\times\{0\}\hskip2.5in$$  
 has distance at least $\ep$ from $T_x', T_y'$, and $T_z'$,  
since  the last coordinate of $A_1$ is zero.  Also, 
 $A_1$  meets $E_0'$ precisely in the loop $x$ and hence misses $E_{0,\delta}'$. Hence $A_1$ misses $E_{0,\delta}'$,  from which it follows by the surgery relation that $x'=\mu_x'$ in $\pi_1(\TT^{4'}_\sur \setminus E_{0,\delta}')$. Moreover, the torus $\{0\}\times \TT^2\times \{0\}$ in $\TT^{4\prime}_\sur$
misses $\Nbd{E'_{0,\delta}\cup T_z}$, from which one deduces that $\mu_x'=[z',\bar y']$ in  $\pi_1(\TT^{4'}_\sur \setminus \Nbd{E_{0,\delta}'\cup T_z'})$ 
 (see Figure \ref{fig59fig}). Therefore
$x'=\mu_x'=[z',\bar y']$ in $\pi_1(\TT^{4'}_\sur \setminus \Nbd{E_{0,\delta}'\cup T_z'})$ and hence also in $\pi_1(P\setminus \Sigma_{P,\delta})$.
\end{proof}
\section{The marked genus 2 surface $\Sigma_S\subset \TT^4_\sur $}\label{sect5}

In this section we produce a second building block, which consists of the closed 4-manifold $S=\TT^4_\sur\# 2\overline{\CP }^2$  and an embedding $\tilde h_S\colon \Sigma\times D^2\to S$.

\subsection{The genus two resolution $\Sigma_S$ of $T_z\cup E$}\label{resolution}

The pair of  oriented tori $T_z=C_z\times S^1$ and $E$ intersect at the single point $p=(-\tfrac\pi 2,\tfrac\pi 2,0,0)$, transversely,  and  $T_z\cdot E=1$.  

  \medskip

The {\em oriented resolution} $\Sigma_S$ of $T_z\cup E$   is  defined to be the smooth  oriented closed genus 2 subsurface constructed  by removing a pair of    2-discs of radius $\ep$ centered at $p$, one from $T_z$ and  one from $E$,     and replacing them with an embedded annulus $A$ in $B^4_{\ep}(p)\setminus B^4_{\ep/4}(p)$ with the same boundary as the two removed discs.  In fact,  $A$ is obtained by pushing a smooth annulus in  $S^3_{\ep}(p)$ with boundary the Hopf link into $B^4_{\ep}(p)$; see Appendix \ref{CP}.  

Hence 
\begin{equation}\label{eq3.6}\Sigma_S=E_0\cup A\cup T_{z,0},\end{equation}
where $T_{z,0}=T_z\setminus B^2_\ep(p)$
is a once-punctured torus.
The base point $e$ and the loops $x,y, \mu_z$ lie on $\Sigma_S$.   See Figure \ref{fig37fig}.

\subsection{The paths $\rho_i$ in $Y^c\times S^1$} 
 \label{rhoi}
 Recall that $e_z:=(-\tfrac\pi 2 + \tfrac{\ep}{\sqrt{2}},\tfrac\pi 2 - \tfrac{\ep}{\sqrt{2}},0,0)$ is the closest
point on $\partial\Nbd{T_z}$ from 
$e=(0,0,0,0)$.  
 Define the paths:  
\begin{enumerate} 
\item $\rho_1$ is  the line segment from $e=(0,0,0,0)$ to $e_z$. The path $\rho_1$ lies on $E_0\subset \Sigma_S$.   
\item $\rho_2$ is any embedded arc in the annulus $A$ inside $B^4_\ep(p)$ from  $e_z$ to  $(-\tfrac{\pi }{2} ,\tfrac{\pi }{2}, -\tfrac{\ep }{\sqrt{2}},-\tfrac{\ep}{\sqrt{2}})$.  The path $\rho_2$ lies on $A\subset \Sigma_S$.  
\item $\rho_3$ is the line segment  from the endpoint  of $\rho_2$ to $(-\tfrac{\pi }{2} ,\tfrac{\pi }{2}, -3\ep,-3\ep)$.  The path $\rho_3$ lies on $T_{z,0}\subset \Sigma_S$. \end{enumerate}
 The paths $\rho_1,\rho_2,\rho_3$  can be seen in Figures \ref{fig26fig}, \ref{fig37fig}, \ref{fig40fig}, and \ref{fig45fig}.   
  The path $\rho_2$ in the annulus $A$ is not unique, even up to homotopy, and so we fix a choice for $\rho_2$ once and for all.  Note that $\rho_2$ and $\rho_3 $   lie in the $5\ep$ neighborhood  of $\rho_1$, a 4-ball in the fundamental domain $[-\pi,\pi]^4$ which misses the tori $T_x$ and $T_y$.

\medskip

\subsection{Marking $\Sigma_S$}
The base point $e$ and loops $x,y$ lie on $E_0\subset \Sigma_S$. The  loops $x$ and $y$ are embedded circles and intersect transversely and positively at $e$.  Define two oriented, embedded circles $\tilde \zeta$ and $\tilde \tau$ in $T_{z,0}$ by
\begin{equation}\label{tildezetatau}
\tilde\zeta=\{ (-\tfrac\pi 2,\tfrac\pi 2)\} \times S^1\times\{ -3\ep \}\text{ and }
\tilde\tau=\{(-\tfrac\pi 2,\tfrac\pi 2, -3\ep)\}\times S^1.
\end{equation}
Then $\tilde\zeta\cdot \tilde\tau=1$.

Connect $\tilde \zeta$ and $\tilde \tau$ to the base point $e$ using the path   $\rho=\rho_1\rho_2\rho_3$  in $\Sigma_S$ starting at $e$ and ending at $ \tilde\zeta\cap \tilde\tau$. 
 The interior of $\rho$ misses
 $x\cup y\cup   \tilde\zeta\cup \tilde\tau$.   Define the corresponding based   loops in $\Sigma_S$ by 
  $$\zeta:=\rho \tilde\zeta\bar\rho \text{ and }\tau:=\rho\tilde\tau\bar\rho.$$

\begin{prop} \label{lem1.33}  The loops $x,y,\zeta,\tau$ satisfy
  $$[\zeta,\tau][y,\bar x]=1\text{ in } \pi_1(\Sigma_S,e), \text{ and }
 i_S(\tau)=t,  i_S(\zeta)=z\text { in }\pi_1(\TT^4_\sur,e).$$
 There is an orientation preserving diffeomorphism $h_S\colon (\Sigma,\sigma)\cong (\Sigma_S,e)$  such that $h_S \colon \pi_1(\Sigma,\sigma)\to \pi_1(\Sigma_S,e)$ is given by
$$
a_1\mapsto \zeta, ~ b_1\mapsto \tau, ~ a_2\mapsto   y, ~ b_2\mapsto \bar x.
$$
\end{prop}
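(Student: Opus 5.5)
The proof has three parts: the images $i_S(\tau)$ and $i_S(\zeta)$ in $\pi_1(\TT^4_\sur,e)$, the surface relation, and the diffeomorphism $h_S$.

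\emph{The images of $\tau$ and $\zeta$.} The key observation is that the path $\rho=\rho_1\rho_2\rho_3$ lies in the $5\ep$-neighborhood of $\rho_1$. This is a convex 4-ball in the fundamental domain that misses $T_x$ and $T_y$, so it lies in $\TT^4_\sur$. Inside this ball, $\rho$ is homotopic rel endpoints to the straight segment from $e$ to $q:=(-\tfrac\pi2,\tfrac\pi2,-3\ep,-3\ep)$. The circles $\tilde\tau$ and $\tilde\zeta$ through $q$ are coordinate circles in the $t$- and $z$-directions. I would then translate them linearly back to $t$ and $z$ through $e$, using the straight-line homotopy in the first three coordinates. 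The interpolating circles $\{(-s\tfrac\pi2,s\tfrac\pi2,-3s\ep)\}\times S^1$ and $\{(-s\tfrac\pi2,s\tfrac\pi2)\}\times S^1\times\{-3s\ep\}$, for $s\in[0,1]$, stay at distance roughly $\pi/2$ or more from $T_x$ and $T_y$. This is checked by comparing the $(y,z)$ or $(x,z)$ coordinates with $(-\tfrac\pi2,\tfrac\pi2)$ and $(\tfrac\pi2,-\tfrac\pi2)$. Hence these annuli, together with the segment homotopy, live in $\TT^4_\sur$. It follows that $i_S(\tau)=t$ and $i_S(\zeta)=z$. The annulus for $\zeta$ passes through $T_z$, which is harmless because $T_z$ is filled back in $\TT^4_\sur$.

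\emph{The surface relation.} For $[\zeta,\tau][y,\bar x]=1$ I would argue purely on the surface $\Sigma_S=E_0\cup A\cup T_{z,0}$. The curves $x$ and $y$ cut $E_0$ into a square with a hole, so the boundary $\partial E_0$, suitably based via $\rho_1$, equals a commutator of $x$ and $y$ in $\pi_1(E_0,e)$. Similarly, $\partial T_{z,0}$ based via $\rho_2\rho_3$ equals a commutator of $\tilde\zeta$ and $\tilde\tau$. The annulus $A$ identifies the two boundary circles with opposite orientations, giving a relation of the form $[\zeta,\tau]^{\pm1}[y,\bar x]^{\pm1}=1$. The remaining task is to pin down the signs, and only the ordering and orientation conventions matter here. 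Orientations are fixed by the oriented resolution: $T_z\cdot E=1$, and $A$ is the positive Hopf annulus. Since $x\cdot y=1$ with the order $(y,\bar x)$ matching the octagon convention of $a_1\cdot b_1=1$, and $\tilde\zeta\cdot\tilde\tau=1$, the two handles enter with matching conventions. Bookkeeping then yields exactly $[\zeta,\tau][y,\bar x]=1$ in $\pi_1(\Sigma_S,e)$.

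\emph{The diffeomorphism.} I would apply Lemma \ref{lem3.1} with $c_1=\tilde\zeta$, $d_1=\tilde\tau$, $c_2=y$, $d_2=\bar x$, so that $p_1=q$ and $p_2=e$, and with connecting arc $\bar\rho$. The hypotheses hold. The curves $\tilde\zeta$ and $y$ meet their partners once, with $\tilde\zeta\cdot\tilde\tau=1$ and $y\cdot\bar x=1$. The pairs are disjoint, one lying in $T_{z,0}$ and the other in $E_0$. The interior of $\rho$ misses all four curves. The commutator relation is the one just established, conjugated by $\rho$. The lemma produces $h$ based at $q$; composing with the base-point change along $\rho$, or equivalently applying the lemma with the roles of the pairs swapped and base point $e$, gives $h_S$ with $a_1\mapsto\zeta$, $b_1\mapsto\tau$, $a_2\mapsto y$, $b_2\mapsto\bar x$.

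The main obstacle is the sign and orientation bookkeeping through the resolution annulus $A$. In particular one must verify that the boundary orientations induced on $\partial E_0$ and $\partial T_{z,0}$ by the Hopf-annulus gluing produce the product $[\zeta,\tau][y,\bar x]$ rather than an inverse or a reversed order. I would settle this with a local model near $p$, viewing $E$ and $T_z$ as the complex lines $\{z=t=0\}$ and $\{x=y=0\}$ in $\CC^2$ and reading off the orientations.
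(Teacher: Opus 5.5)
Your argument for $i_S(\tau)=t$, $i_S(\zeta)=z$ is fine. It makes explicit the paper's one-line appeal to the convex $5\ep$-ball around $\rho_1$. The final use of Lemma \ref{lem3.1} with a point-push along $\rho$ is also fine.

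The gap is in the surface relation. You treat $\partial T_{z,0}$, based along $\rho_2\rho_3$, as ``a commutator'' of $\tilde\zeta,\tilde\tau$. You then say that only intersection signs and the orientation of the annulus at $p$ remain to be fixed. That data does not determine the relation. The based loop $\bar\rho_3(\partial T_{z,0})\rho_3$ at $q=\tilde\zeta\cap\tilde\tau$ depends on the quadrant at $q$ from which $\rho_3$ arrives. The four quadrants give the four cyclic rearrangements $[\tilde\zeta,\tilde\tau]$, $[\tilde\tau,\bar{\tilde\zeta}]$, $[\bar{\tilde\zeta},\bar{\tilde\tau}]$, $[\bar{\tilde\tau},\tilde\zeta]$. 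So even your intermediate form $[\zeta,\tau]^{\pm1}[y,\bar x]^{\pm1}=1$ is unjustified.

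For instance, take an embedded arc homotopic rel endpoints to $\rho_3\tilde\zeta$. It reaches $q$ from an adjacent quadrant and passes every check in your proposal:
\begin{itemize}
\item its interior misses $x,y,\tilde\zeta,\tilde\tau$;
\item the intersection numbers and $A$ are untouched;
\item the images in $\pi_1(\TT^4_\sur)$ are still $z$ and $t$, since $[t,z]=1$ there.
\end{itemize}
Yet it replaces $\tau$ by $\zeta\tau\bar\zeta$ and turns $[\zeta,\tau][y,\bar x]$ into $[\zeta,[y,\bar x]^{-1}]\neq 1$ in $\pi_1(\Sigma_S,e)$. A local model at $p$, which is away from $q$, cannot detect this.

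The missing idea is the one the paper's proof rests on. The segment $\rho_3$ runs in direction $-(\tfrac{\partial}{\partial z}+\tfrac{\partial}{\partial t})$. So it enters $q$ along the bisector of the positive tangent vectors $\tfrac{\partial}{\partial z}$, $\tfrac{\partial}{\partial t}$ of $\tilde\zeta$, $\tilde\tau$. This is exactly how $\rho_1$ leaves $e$: along the bisector of the positive tangent vectors $\tfrac{\partial}{\partial y}$, $-\tfrac{\partial}{\partial x}$ of $y$, $\bar x$. This matched choice gives $\rho_1(\partial E_0)\bar\rho_1=[y,\bar x]$ and $\bar\rho_3(\partial T_{z,0})\rho_3=[\tilde\zeta,\tilde\tau]$. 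Hence $[y,\bar x]=\rho_1\rho_2(\partial T_{z,0})^{-1}\bar\rho_2\bar\rho_1=\rho[\tilde\tau,\tilde\zeta]\bar\rho=[\tau,\zeta]$.

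Conversely, the point you single out as the main obstacle is automatic. That $\partial E_0$ and $\partial T_{z,0}$ are identified across $A$ with opposite orientations follows once $\Sigma_S$ carries the orientation extending those of $E_0$ and $T_{z,0}$. No Hopf-link computation is needed.
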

\noindent{\bf Remark.} Finding a path $\rho$ in $\Sigma_S$ so that
$[\zeta,\tau][y,\bar x]=1$ in $\pi_1(\Sigma_S,e)$ is straightforward, as is finding a path $\rho$ so that
$ i_S(\tau)=t,  i_S(\zeta)=z\text { in }\pi_1(\TT^4_\sur,e).$
The point of this lemma is that the  path $\rho_1\rho_2\rho_3$  satisfies both requirements.
 
\begin{proof}

The choice of $\tilde\zeta$, $\tilde\tau$, and $\rho_3$ guarantees that the   line segment $\rho_3$ from $\partial T_{z,0}$ to $\tilde\zeta\cap\tilde\tau=(-\tfrac\pi 2,\tfrac\pi 2,-3\ep,-3\ep) $ lies on $\Sigma_S$ and bisects the angle in $T_*\Sigma_S$ formed by the (positive) tangent vectors  of $\tilde\zeta$ and $\tilde\tau$ at the intersection point, as indicated on the right and the bottom  in Figure \ref{fig37fig}.

Since $\rho:=\rho_1\rho_2\rho_3$ lies in the convex $5\ep$ neighborhood of $\rho_1,$ $$  i_S(\tau)=t\text{ and } i_S(\zeta)=z\text{ in } \pi_1(\TT^4_\sur,e).$$
Since $\rho_3$  bisects the angle formed by the  tangent vectors  of $\tilde\zeta$ and $\tilde\tau$ at its endpoint, 
$$[y,\bar x]=\rho_1 (\partial E_0)\bar\rho_1=\rho_1 \rho_2(\partial T_{z,0})^{-1}\bar\rho_2\bar\rho_1=\rho[\tilde\tau,\tilde\zeta]\bar\rho=[\tau,\zeta],
$$
so that $[\zeta,\tau][y,\bar x]=1.$ Applying Lemma \ref{lem3.1} completes the proof.
\end{proof}  

\begin{figure}
 \labellist
 \small\hair 2pt
  \endlabellist
\centering
\includegraphics[width=5in]{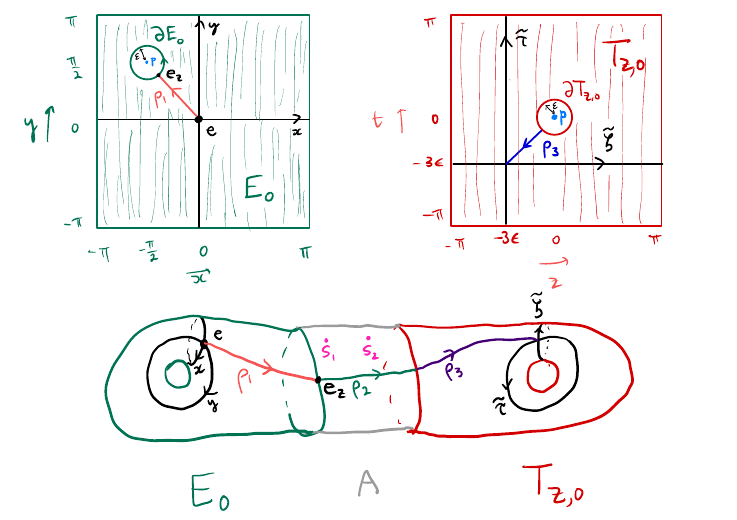}
 \vskip-2ex
 \caption{\label{fig37fig} Proof of Proposition \ref{lem1.33}.  
\color{red}  }\end{figure}

\subsection{Blowing up $\TT^4_\sur$ twice} 

Note that $\Sigma_S\cdot \Sigma_S=(E+T_z)\cdot(E+T_z)=2$ in $\TT^4_\sur$.
 
\begin{df}\label{S}  Denote 
$$
 S = \TT^4_\sur \# 2\overline{\CP }^2, 
$$
the closed (symplectic) 4-manifold obtained by   Luttinger surgery on $T_x$ and   $T_y$ in $\TT^4$, followed by blowing up $\TT^4_\sur$ twice at points $s_1,s_2\in \Sigma_S$. For convenience, choose $s_1,s_2$ to lie in the annulus $A$ inside $B_\ep(p)$. \end{df}

The proper transform
$\tilde \Sigma_S\subset S$ of $\Sigma_S\subset \TT^4_\sur$ (see Appendix \ref{CP}) equals $ E_0\cup \tilde A\cup T_{z,0},$
where $\tilde {A}$ denotes the proper transform of $A$.
By construction,  $\tilde \Sigma_S\cdot  \tilde \Sigma_S=0$.
Denote the two exceptional 2-spheres by $\mathfrak{E}_1,\mathfrak{E}_2$.

\begin{prop}
 The inclusion $\pi_1\big(S \setminus  \tilde\Sigma_S ,e\big)\to \pi_1(S,e)$
 is an isomorphism.
\end{prop}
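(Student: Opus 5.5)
The plan is the standard exceptional-sphere argument. By the Seifert--Van Kampen theorem, $\pi_1(S,e)$ is obtained from $\pi_1(S\setminus\tilde\Sigma_S,e)$ by adjoining the relation that a meridian of $\tilde\Sigma_S$ is trivial. Here $S=(S\setminus\Nbd{\tilde\Sigma_S})\cup\Nbd{\tilde\Sigma_S}$, $\Nbd{\tilde\Sigma_S}\cong \tilde\Sigma_S\times D^2$ because $\tilde\Sigma_S\cdot\tilde\Sigma_S=0$, and the boundary $\tilde\Sigma_S\times S^1$ surjects onto $\pi_1(\tilde\Sigma_S)$ with kernel normally generated by the meridian. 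So the inclusion is always surjective, and it is an isomorphism exactly when some (hence every) meridian of $\tilde\Sigma_S$ is already null-homotopic in $S\setminus\tilde\Sigma_S$.

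To kill the meridian I will use one of the exceptional spheres, say $\mathfrak{E}_1$. The point $s_1$ lies on $\Sigma_S$ and $\Sigma_S$ is smooth there, so the proper transform $\tilde\Sigma_S$ meets $\mathfrak{E}_1$ transversely in exactly one point, with $\tilde\Sigma_S\cdot\mathfrak{E}_1=1$. This is the standard local model of blowing up a point on a smooth surface (Appendix \ref{CP}): in $B^4\cong \CC^2$ the surface is a complex line through the origin, and its proper transform meets the exceptional curve once. Remove from $\mathfrak{E}_1$ a small disk around this intersection point. The result is a disk $D\subset S\setminus\tilde\Sigma_S$ whose boundary is the boundary of a normal disk fiber of $\tilde\Sigma_S$, i.e.\ a meridian of $\tilde\Sigma_S$. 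Therefore this meridian is null-homotopic in $S\setminus \tilde\Sigma_S$.

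It remains to handle basing. Every meridian is freely homotopic to every other, since $\tilde\Sigma_S$ is connected. Any based meridian, in particular $\mu_{S,\delta}$ or whatever based meridian enters the SVK presentation, is therefore conjugate in $\pi_1(S\setminus\tilde\Sigma_S,e)$ to $\partial D$, and hence trivial. So the added relation is redundant and the inclusion induces an isomorphism.

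The only step needing care is the local verification that $\tilde\Sigma_S$ meets $\mathfrak{E}_1$ transversely in a single point with the annulus $A$ smooth near $s_1$. This follows from choosing $s_1$ in the interior of the smooth annulus $A$, away from $E_0$ and $T_{z,0}$, and using a complex chart at $s_1$ in which $A$ is a complex line; I would check this against the blow-up model in the appendix. Everything else is formal SVK.
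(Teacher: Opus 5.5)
Your proposal is correct and is essentially the paper's argument. The paper's proof is the one-line observation that the exceptional sphere $\mathfrak{E}_1$ is a geometrically dual sphere to $\tilde\Sigma_S$, and your SVK step, punctured-sphere disk and basing remark simply unpack that observation.
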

\begin{proof} $\tilde\Sigma_S$ has a geometrically dual embedded 2-sphere $\mathfrak{E}_1$ in $S.$\end{proof}

\subsection{Framing $\tilde \Sigma_S$} 
The surface $\tilde \Sigma_S=E_0\cup \tilde A\cup T_{z,0}$ admits a transverse vector field $\nu_S$ satisfying
\begin{equation}
 \nu_S(s)= \tfrac{\partial}{\partial z}+\tfrac{\partial}{\partial t}\text{ if } s\in E_0.  
 \end{equation}
This is because every map from the once-punctured torus $E_0$ to $SO(2)$ is nullhomotopic on the boundary circle.  
\color{black}

Choose $0<\delta<\tfrac{\ep^2}{8}$ small enough so that  
$$ \tilde \Sigma_{S,\delta}:= \exp(\delta\cdot \nu(\tilde\Sigma_S)) $$
is an embedded genus two surface.  The subsurface $E_{0,\delta}\subset  \tilde \Sigma_{S,\delta}$ equals $E_0\times (\delta,\delta)$, and in particular misses the loops $x, y, z, t, \mu_x,\mu_y,\mu_z$ in $\TT^{4}_\sur $.  Since the distance from the loops $z$ and $t$ to $T_z$ is greater than $\ep$, the loops $z$ and $t$ lie in $S\setminus    \tilde \Sigma_{S,\delta}$.

Define  
\begin{equation}\label{mudel}\mu_{S,\delta}(\theta)=(0,0, (\delta,\delta))+\sqrt{2}\delta(\cos(\theta+\tfrac{5\pi }4), \sin(\theta+\tfrac{5\pi }4)),~  \theta\in S^1.\end{equation}
Then  $\mu_{S,\delta}(\theta)$ is the meridian of $\tilde \Sigma_{S,\delta}$ at $e$.

Lemma  \ref{pushoff} implies that surface $\tilde \Sigma_{S,\delta}$  admits a framing
$\tilde h_S:  \Sigma\times D^2\hookrightarrow S$
with image the tubular neighborhood $\Nbd{\tilde\Sigma_{S,\delta}}$ such that 
\begin{enumerate}
\item $ \tilde h_S(\Sigma \times \{0\})=\tilde\Sigma_{S,\delta}$, 
\item$ \tilde h_S|_{\Sigma \times \{1\}}=h_S$, and  so $ \tilde h_S(\Sigma \times \{1\})=\tilde\Sigma_{S}$, 
\item $\tilde h_S(\sigma,1)=e$.
 
\item  $\tilde h_S(\sigma,e^{\theta\bbi})=\mu_{{S,\delta}}(\theta).$  \end{enumerate}

For $\delta$ small enough, the exceptional 2-sphere $\mathfrak{E}_1$ intersects  $\tilde\Sigma_{S,\delta}$ transversely in one point, since it intersects $\tilde\Sigma_S$ transversely in one point.     Since $\tilde\Sigma_{S,\delta}$  is path connected,  one can isotop $\mathfrak{E}_1$  if needed so that 
$\mathfrak{E}_1\cap \Nbd{\tilde \Sigma_{S,\delta}} =\tilde h_S(\sigma\times D^2)$. Therefore $\mu_{S,\delta}$  bounds an embedded disk  $\Delta_1$
in $S\setminus \tilde h_S(\Sigma\times D^2)$, namely $\Delta_1=\mathfrak{E}_1\cap \big(S\setminus \tilde h_S(\Sigma\times D^2)\big)$.   In particular, the inclusion $\pi_1\big(S  \setminus  \tilde\Sigma_{S,\delta} ,e\big)\to \pi_1(S,e)$
 is an  isomorphism. 

\subsection{ The  building block $(S, \tilde h_S)$}

We finish this section with a summary of the properties of the building block $(S, \tilde h_S)$ which are established above.
 \begin{thm}\label{thmS}The pair  $(S, \tilde h_S\colon \Sigma\times D^2\to S)$ satisfies:
\begin{enumerate}
\item $
\pi_1\big(S, e\big)=\langle  x,y,z, t\mid [y,\bar x],~{\rm Rel} \rangle
$ and $H_1(S)\cong\ZZ^2$, generated by $y,z$
\item $\tilde h_S(\Sigma\times\{1\})=\tilde\Sigma_{S}=E_0\cup \tilde A\cup T_{z,0}$. 
\item $\tilde h_S(\Sigma\times\{0\})=\tilde\Sigma_{S,\delta}$. The subsurface $E_{0,\delta}\subset  \tilde \Sigma_{S,\delta}$ equals $E_0\times (\delta,\delta)$.
\item The base point $e$ and the loops $x,y,\mu_z$ and $\mu_{S,\delta}$  lie  on $\tilde h_S(\Sigma\times S^1)$, hence in $S\setminus \tilde\Sigma_{S,\delta}$.

 \item
The loops $ z$ and $t$ lie in  $S\setminus \tilde\Sigma_{S,\delta}$. 

 \item The meridian $\mu_{S,\delta}=\tilde h_S(\sigma\times S^1)$ of $\tilde\Sigma_{S,\delta}$ bounds a smoothly embedded 2-disk $\Delta_1$ in $S \setminus \Nbd{\tilde\Sigma_{S,\delta}}$. 
\item The inclusion $\pi_1\big(S  \setminus  \tilde\Sigma_{S,\delta} ,e\big)\to \pi_1(S,e)$
 is an  isomorphism. 
 \item The composite 
 $$ \pi_1(\Sigma\times\{1\},\sigma)\xrightarrow{\tilde h_S} 
 \pi_1(\tilde\Sigma_S,e)
\xrightarrow{i_S} \pi_1(S,e)$$ is given by
$$ a_1\mapsto z, ~ b_1\mapsto t, ~ a_2\mapsto   y, ~ b_2\mapsto \bar x.$$
\end{enumerate}\qed
\end{thm}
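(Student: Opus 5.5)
This theorem collects what the section has established, so the plan is to check each item against an earlier result or construction rather than prove anything new.

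For (1), I would note that blowing up does not change the fundamental group. A connected sum with $\overline{\CP}^2$ along a 4-ball (SVK with simply connected $\overline{\CP}^2\setminus B^4$ and simply connected $S^3$) gives $\pi_1(S,e)\cong\pi_1(\TT^4_\sur,e)$. The presentation then comes from Proposition \ref{prop2.1}. Since $H_1$ is also unchanged, the Remark after Proposition \ref{prop2.1} gives $H_1(S)\cong\ZZ^2$ generated by $y,z$. The blow-up points $s_1,s_2$ lie in $A\subset B_\ep(p)$, which misses $e$, $\Nbd{T_x\cup T_y}$ and the loops, so the base point and generators are unaffected.

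Items (2), (3), (4) and (6) come directly from the framing $\tilde h_S$ built via Lemma \ref{pushoff}.
\begin{enumerate}
\item Properties (1)--(4) of $\tilde h_S$, together with the identification $E_{0,\delta}=E_0\times(\delta,\delta)$ noted when the transverse field $\nu_S$ equals $\tfrac{\partial}{\partial z}+\tfrac{\partial}{\partial t}$ on $E_0$, give (2) and (3).
\item For (4), the points of $\tilde h_S(\Sigma\times S^1)$ are exactly $\tilde\Sigma_S$ together with the boundary circles of the normal disks. The base point $e$ and the loops $x,y,\mu_z$ lie on $E_0\subset\tilde\Sigma_S=\tilde h_S(\Sigma\times 1)$, and $\mu_{S,\delta}=\tilde h_S(\sigma\times S^1)$.
\item The boundary of the tubular neighborhood is disjoint from $\tilde\Sigma_{S,\delta}=\tilde h_S(\Sigma\times 0)$, which finishes (4).
\item Item (6) is the disk $\Delta_1=\mathfrak{E}_1\cap(S\setminus\tilde h_S(\Sigma\times D^2))$ constructed above.
\end{enumerate}

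For (5) I would argue with coordinates. Away from $B_\ep(p)$, $\tilde\Sigma_{S,\delta}$ lies within $\delta$ of $E_0\cup T_{z,0}$, and near $p$ it lies in $B_{2\ep}(p)$. The loops $z$ and $t$ have first two coordinates $(0,0)$, so they are at distance $>\ep$ from $T_z$ and from $B_{2\ep}(p)$. They also avoid $E_{0,\delta}$, since $z$ has $t$-coordinate $0\neq\delta$ and $t$ has $z$-coordinate $0\neq\delta$.

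For (7), surjectivity holds because the inclusion of the complement of a codimension two submanifold is $\pi_1$-surjective. For injectivity, by SVK the kernel is normally generated by the meridian $\mu_{S,\delta}$, and by (6) this meridian is trivial in the complement. The same argument is the Proposition preceding this subsection, applied to $\tilde\Sigma_{S,\delta}$.

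For (8), I would combine Proposition \ref{lem1.33}, giving $h_S(a_1)=\zeta$, $h_S(b_1)=\tau$, $h_S(a_2)=y$ and $h_S(b_2)=\bar x$, with $i_S(\zeta)=z$ and $i_S(\tau)=t$. The one point needing care is that the proper transform does not alter these loops. The loops $\zeta$ and $\tau$ and the path $\rho$ can be chosen to avoid $s_1,s_2$, since $\rho_2$ is only a fixed arc in $A$ and the $s_i$ can be chosen off it. The computation in $\pi_1(\TT^4_\sur)$ then transfers through the isomorphism in (1). I expect this bookkeeping, namely ensuring the chosen $\rho_2$ misses the blow-up points, to be the only step needing any attention.
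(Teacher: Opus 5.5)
Your proposal is correct and matches the paper, which presents this theorem as a summary of facts already established in the section. Those facts are: invariance of $\pi_1$ under blow-up together with Proposition \ref{prop2.1}; the framing from Lemma \ref{pushoff} with $\nu_S=\tfrac{\partial}{\partial z}+\tfrac{\partial}{\partial t}$ on $E_0$; the disk $\Delta_1$ cut from the dual exceptional sphere $\mathfrak{E}_1$, which kills the meridian and gives (7); and Proposition \ref{lem1.33} for (8). Your extra step of choosing $s_1,s_2$ off $\rho_2$ is harmless but unnecessary, since the blow-down map restricts to a diffeomorphism $\tilde\Sigma_S\to\Sigma_S$ and induces a $\pi_1$-isomorphism compatible with the inclusions.
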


\color{black}
\section{The surface sum $Z=S\#_\psi P$ along $\Sigma_S$ and $\Sigma_P$}\label{sec7}

  The relation
$[a_1,b_1][a_2,b_2]=1$  in $\pi_1(\Sigma,\sigma)$ can be rewritten as
$[a_1 b_1 \bar a_1, \bar a_1][a_2 b_2 \bar a_2, \bar a_2]=1.$
Moreover $(a_1 b_1 \bar a_1)\cdot (\bar a_1)=1=(a_2 b_2 \bar a_2)\cdot (\bar a_2)$, and the four loops $a_1 b_1 \bar a_1, \bar a_1 , a_2 b_2 \bar a_2, \bar a_2$ form two disjoint embedded hyperbolic pairs.

  Lemma \ref{lem3.1}  implies that  there exists a diffeomorphism $\Psi:(\Sigma,\sigma)\to(\Sigma,\sigma)$ inducing
\begin{equation}\label{glue2}a_1\mapsto a_1b_1\bar a_1, b_1\mapsto \bar a_1,~ a_2\mapsto a_2 b_2 \bar a_2,~ b_2\mapsto \bar a_2\end{equation}
on $\pi_1(\Sigma,\sigma)$.\footnote{Any diffeomorphism of $\Sigma$ can be used to  produce a symplectic manifold, but $\Psi$ suffices for our constructions.}
\begin{df}\label{defZ}
Define  the {\em (symplectic) surface sum} of $S$   and $P$ along their marked  (symplectic) surfaces $\Sigma_S$ and $\Sigma_P$  to be
\begin{align*}  Z&:=\big(S\setminus \tilde h_S(\Sigma \times D^2)\big)\cup_{\psi}\big(P\setminus \tilde h_P(\Sigma \times D^2)\big),\end{align*}
with gluing diffeomorphism $\psi$  given by
the composite:
$$\psi:\partial \Nbd{\Sigma_{S,\delta}}\xrightarrow{\tilde h_S^{-1}}\Sigma\times S^1\xrightarrow{\Psi\times -{\rm Id}}
\Sigma\times S^1
\xrightarrow{\tilde h_P}\partial \Nbd{\Sigma_{P,\delta}}.
$$
See Figure \ref{fig57fig} for a schematic. \end{df}
 
 We write $e$ for the base point  of $Z$, with the understanding that   the loops $x,y,z,t$ and $\mu_{S,\delta}$ in $S$ are based at $e$ and $x',y',z',t',x'',y'',z'',t''$ and the meridian $\mu_{P,\delta}$ in $P$ are based at $e_z'=e_z''$.

 The isomorphism $\psi_\#\colon\pi_1(\partial\Nbd{\Sigma_{S,\delta}})\to \pi_1(\partial\Nbd{\Sigma_{P,\delta}})$ induced by $\psi$ is given by $$
\zeta\mapsto y'\bar x' \bar y', ~ \tau\mapsto \bar y', ~y\mapsto y''\bar x'' \bar y'', ~ \bar x\mapsto \bar y'',~\mu_{S,\delta}\mapsto \mu_{P,\delta}^{-1}.
$$

 \color{black}

\subsection{Fundamental group of $Z$}

Set  \begin{equation}\label{glue}
{\rm Sum} 
=\{\bar z(y'\bar x' \bar y'),~ \bar t\bar y',~ \bar y(y''\bar x''\bar y''), ~
 \bar  x y''\}.
\end{equation}

\begin{thm} \label{thm5.1}The fundamental group  $\pi_1(Z,e) $ has a presentation with $12$ generators:
$$
x,y,z,t,x',y',z',t',x'',y'',z'',t''
$$
a complete set of 32 relations is given by
$$
[y,\bar x]=1,~{\rm Rel}, ~{\rm Rel}', ~{\rm Rel}'', ~{\rm Gl} , ~ {\rm Sum}.
$$ 
Hence $Z$ is simply connected.
\end{thm}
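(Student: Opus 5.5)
The presentation comes from the Seifert--Van Kampen theorem applied to the decomposition in Definition \ref{defZ}. The trivialization is then a purely algebraic cancellation.

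\emph{Step 1: the presentation.} Write $Z=Z_S\cup_\psi Z_P$ with $Z_S=S\setminus \tilde h_S(\Sigma\times D^2)$ and $Z_P=P\setminus \tilde h_P(\Sigma\times D^2)$. These meet along $\Sigma\times S^1$, whose fundamental group is generated by $a_1,b_1,a_2,b_2$ and the meridian.

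By Theorem \ref{thmS}(7), $\pi_1(Z_S,e)=\pi_1(S,e)=\langle x,y,z,t\mid [y,\bar x],{\rm Rel}\rangle$. By Theorem \ref{thmS}(6), $\mu_{S,\delta}$ bounds the disk $\Delta_1$ in $Z_S$. Since $\psi$ sends $\mu_{S,\delta}$ to $\mu_{P,\delta}^{-1}$, SVK kills $\mu_{P,\delta}$ in $\pi_1(Z)$.

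The key observation is that $\pi_1(Z_P)/\langle\!\langle \mu_{P,\delta}\rangle\!\rangle=\pi_1(P)$. This is SVK again: $P=Z_P\cup \Sigma\times D^2$, and $\Sigma\times D^2$ contributes only the disk bounded by the meridian, since $\pi_1(\Sigma)$ already comes from $\Sigma\times S^1\subset Z_P$. So the unknown group $\pi_1(P\setminus\Sigma_{P,\delta})$ never has to be computed, and we may use the presentation of Theorem \ref{thmP}(1).

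Identify base points via $\rho_1'$ and the conventions of Section \ref{sec5}. The remaining amalgamation relations equate the images of $a_1,b_1,a_2,b_2$ on the two sides. By Theorem \ref{thmS}(8) and Theorem \ref{thmP}(8) together with the formula for $\psi_\#$, these are exactly $z=y'\bar x'\bar y'$, $t=\bar y'$, $y=y''\bar x''\bar y''$ and $\bar x=\bar y''$, i.e.\ the set ${\rm Sum}$.

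Counting gives $12$ generators and $1+8+8+8+3+4=32$ relations, which proves the first assertion.

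\emph{Step 2: trivialization.} I would first translate ${\rm Sum}$ into substitutions:
\[
t=\bar y', \qquad x=y'', \qquad z=y'\bar x'\bar y', \qquad y=y''\bar x''\bar y''.
\]
The plan is to start the cancellation on the $S$ side, since $t$ is very constrained there. By ${\rm tCen}$, $t$ commutes with $x,y,z$. The relation $[y,\bar x]=1$ together with $x\bar\mu_x$ gives $x=\mu_x=[z,\bar y]$. The relation $t\bar\mu_y$ gives $t=\mu_y=[x,\bar z]$.

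Since $t=\bar y'$, the element $y'$ commutes with $x,y,z$. Substituting $z=y'\bar x'\bar y'$ into these commutation relations should force $y'$ to commute with $x'$. Combined with ${\rm Rel}'$, in particular $x'=[z',\bar y']$, ${\rm tCen}'$, and ${\rm Gl}$ ($z'=t''$, $t'=\bar z''$), this should make $z'$, hence $t''$, interact with the $P''$ generators.

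On the other side, $x=y''$ and $y=y''\bar x''\bar y''$ commute (from $[y,\bar x]=1$), which yields $[y'',x'']=1$. This collapses the Borromean relation $\mu_z''=[y'',\bar x'']$ to $1$, and then ${\rm Sur}''$ and ${\rm Bor}''$ should propagate. Concretely, I expect to show successively $x''=1$ (so $y=1$), then $x=[z,\bar y]=1$ (so $y''=1$), then $t=[x,\bar z]=1$ (so $y'=1$), then $z=\bar x'$. After that, ${\rm Rel}'$ and ${\rm Gl}$ should kill $x',z',t',z'',t''$ in turn. The final step is to check that all twelve generators vanish.

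\emph{Main obstacle.} The hard part is the first cancellation: extracting one nontrivial generator equal to $1$ from the relations, since each of ${\rm Rel}$, ${\rm Rel}'$, ${\rm Rel}''$ is a nonabelian group on its own (see the Remark after Proposition \ref{prop2.1}). What I would try first is to exploit $[y'',x'']=1$, coming from $[y,\bar x]=1$ via ${\rm Sum}$, inside $\pi_1(\TT^{4\prime\prime}_\sur)$. The Bor relations then degenerate; for example, $\mu_y''=[x'',\bar z'']$ and ${\rm Sur}''$ give $t''=[x'',\bar z'']$, while $[z'',[y'',\bar x'']]$ becomes trivial. The aim is to show $x''=\mu_x''=[z'',\bar y'']$ is trivial using $t''=z'$ and the commutation of $t''$ with $x'',y''$. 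Everything after this first cancellation is routine substitution.
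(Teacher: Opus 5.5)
Your Step 1 is correct and is the paper's argument in different packaging. The paper puts $\Nbd{\Delta_1}$ on the $P$-side (the space $P^\#$), so the overlap has fundamental group $\pi_1(\Sigma)$. You instead kill $\mu_{P,\delta}$ by hand, via $\psi_\#(\mu_{S,\delta})=\mu_{P,\delta}^{-1}$ and $\pi_1(Z_P)/\langle\langle\mu_{P,\delta}\rangle\rangle=\pi_1(P)$. Both give $\pi_1(S)*_{\pi_1(\Sigma)}\pi_1(P)$ and the 32 relations.

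\textbf{The gap is in Step 2.} The first cancellation, which is the whole content of ``hence $Z$ is simply connected'', is never carried out. Phrases like ``should force'', ``should propagate'' and ``the aim is to show'' are not an argument, and your cascade can only start once some generator is known to be trivial. Moreover, the route you suggest cannot succeed as stated. Adding $[y'',x'']=1$ to ${\rm Rel}''$ gives exactly $\pi_1(\TT^4_\sur)$ (Proposition \ref{prop2.1}), and there $x''\neq 1$: the $SU(2)$ representation in the Remark after that proposition sends it to $-\bbj$. The relation $t''=z'$ on its own is just a renaming, and you do not say which $\TT^{4\prime}$ or $S$-side relations would then act on $z'$.

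\textbf{The missing idea} is the paper's Lemma \ref{trick2}: aim at $x$, not $x''$, by proving $[z,\bar y]=1$. Your own preliminary deductions, $[y',x']=1$ and $[y'',x'']=1$, are correct and make this two lines.
\begin{itemize}
\item From $[y',x']=1$ and ${\rm Sur}'$ you get $z=y'\bar x'\bar y'=\bar x'=[\bar y',z']$. So $z$ lies in the subgroup generated by $y'$ and $z'$.
\item From $[y'',x'']=1$ you get $y=\bar x''$.
\item $y$ commutes with $y'=\bar t$ by ${\rm tCen}$, and with $z'=t''$ by ${\rm Gl}$ and ${\rm tCen}''$.
\item Hence $y$ commutes with $z$, and ${\rm Sur}$ gives $x=[z,\bar y]=1$.
\end{itemize}
From there the cascade runs in this order:
\begin{itemize}
\item $y''=x=1$, so $x''=[z'',\bar y'']=1$ and $y=y''\bar x''\bar y''=1$;
\item $t=[x,\bar z]=1$, so $y'=\bar t=1$;
\item $x'=[z',\bar y']=1$, so $z=y'\bar x'\bar y'=1$ and $t'=[x',\bar z']=1$;
\item $z''=\bar t'=1$, so $t''=[x'',\bar z'']=1$ and $z'=t''=1$.
\end{itemize}

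\textbf{A remark on the paper's version.} The paper writes $z=[\bar y',y'z'\bar y']$ directly, without using $[t,z]=1$ or $[y,\bar x]=1$. This is deliberate: in that form the trick needs only $[t,y]=1$ from the $S$-side. It is therefore reused as Theorem \ref{trick} for $Z_{AP}$, where $[y,\bar x]=\mu_{R,\delta}$ is not trivial in the complement of $\tilde\Sigma_{AP,\delta}$.
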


 \begin{wrapfigure}[11]{r}{0.4\textwidth}
\begin{center}
\includegraphics[width=2.7in]{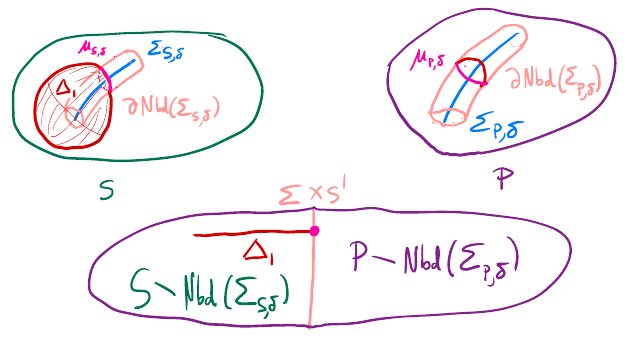}
 \vskip-.1in
\caption{\label{fig57fig} $S\#_\Sigma P= S^\#\cup P^\#$.}
\end{center}
\end{wrapfigure}
\noindent{\em Proof.} To apply the  SVK theorem efficiently, we decompose $Z$ slightly differently than (\ref{defZ}). 
Set $$S^\#:=S\setminus \Nbd{\Sigma_{S,\delta}}\hskip1.5in$$ and 
$$ 
P^\#:= \big(P \setminus 
 \Nbd{\Sigma_{P,\delta}}\big)\cup \Nbd{\Delta_1},\hskip1.5in
$$
where $\Delta_1\subset S^\#$ is the embedded 2-disc  whose boundary circle is    $\mu_{S,\delta}=\tilde{h}_S(\sigma\times S^1)$ of Theorem \ref{thmS} (6).  
Then
$$Z = S^\#\cup P^\#.$$  See Figure \ref{fig57fig}.
The overlap $S^\#\cap P^\#$ is 
$$\partial\Nbd{\Sigma_S}\cup \Nbd{\Delta_1}=(\Sigma_S\times S^1)\cup_{\mu_{S,\delta}} \Nbd{\Delta_1},$$ and therefore $\pi_1(S^\#\cap P^\#,e)=\pi_1(\Sigma_S\times S^1,e)\langle\langle\mu_{S,\delta}\rangle\rangle=\pi_1(\Sigma_S,e)$.
The composite $$\pi_1(\Sigma,\sigma)\xrightarrow{\tilde h_S(-,1)} \pi_1(\partial\Nbd{\Sigma_S},e)\to \pi_1(S^\#\cap P^\#,e)$$ is an isomorphism.
The inclusion $\pi_1(S^\#,e)\to \pi_1(S,e)$ is an isomorphism by Theorem \ref{thmS}. 

The two surjections $\pi_1(P\setminus\Nbd{\Sigma_{P,\delta}})\to \pi_1(P)$ and $\pi_1(P\setminus\Nbd{\Sigma_{P,\delta}})\to \pi_1(P^\#)$ have the same kernel and hence the zig-zag induces an isomorphism $\pi_1(P,e)\cong \pi_1(P^\#,e)$.  It follows that the SVK diagram
\[\begin{tikzcd}
\pi_1(S^\#\cap P^\#,e)\arrow[r]\arrow[d]&\pi_1(S^\#,e)\arrow[d]\\
\pi_1(P^\#,e)\arrow[r]&\pi_1(Z,e)
\end{tikzcd}\]
can be rewritten as \[\begin{tikzcd}
\pi_1(\Sigma,\sigma)\arrow[r]\arrow[d]&\pi_1(S,e)\arrow[d]\\
\pi_1(P,e)\arrow[r]&\pi_1(Z,e)
\end{tikzcd}\]
which  the calculations above identify as
\[\begin{tikzcd}
\langle a_1,b_1,a_2,b_2\mid [a_1,b_1][a_2,b_2]\rangle\arrow[r]\arrow[d]&\langle x,y,z,t\mid [y,\bar x],~{\rm Rel}\rangle\arrow[d]\\
 \langle z', x' ,y',t' ,
 z'', x'', y'',t''
 \mid  {\rm Rel}',~   {\rm Rel}'',~{\rm Gl}  \rangle\arrow[r]&\pi_1(Z,e)
\end{tikzcd}\]
The maps out of $\pi_1(\Sigma,\sigma)$ are calculated in Theorems \ref{thmS} and \ref{thmP}. 
The first assertion now follows from the SVK theorem. 

\medskip

It remains to prove that $Z$ is simply connected. For the reader's convenience, we recall that the four relations in Sum  read: 
$$
 z= y'\bar x' \bar y' ,~  t=\bar y',~  y=y''\bar x''\bar y'', ~
 x =y'';
$$
the three relations in  Gl read:
$$
 ~z'=t'', ~t'=\bar z'',~[y',\bar x'][y'',\bar x''];
$$
and the six surgery relations  Sur,  Sur$'$, Sur$''$ read
\begin{equation}\label{six}
x=[z,\bar y], ~ t =[x, \bar z], 
x'=[z',\bar y'], ~  t'=[x', \bar z'],
x'' =[z'',\bar y''], ~  t''=[x'', \bar z''].
\end{equation}

\begin{lem}\label{trick2}
Using (only) the relations 
\begin{enumerate}
\item $z'=t''$   from Gl,    $[t'', y'']=1, [t'',x'']=1$ from tCen$''$, 
and $x'=[z',\bar y']$ from Sur$'$ in $\pi_1(P)$ (which hold in $\pi_1(P\setminus \Sigma_{P,\delta})$ by Lemma \ref{Pcomp}),
\item $[y,t]=1$ from tCen in $\pi_1(S)$
\item   $z= y'\bar x' \bar y' , ~  t=\bar y' $  and $y=y''\bar x''\bar y''$  from Sum,
\end{enumerate}
 it follows that $[z, \bar y]=1\text{ in } \pi_1(Z).$
\end{lem}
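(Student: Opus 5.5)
The plan is a direct word manipulation. I will express $z$ as a word in $t$ and $t''$ alone, and show that both of these commute with $y$.

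First, rewrite $z$. By Sum, $z=y'\bar x'\bar y'$. By Sur$'$, $x'=[z',\bar y']=z'\bar y'\bar z' y'$, so $\bar x'=\bar y' z' y'\bar z'$. Substituting gives
$$z=y'\,\bar y' z' y'\bar z'\,\bar y'=z'y'\bar z'\bar y'.$$
Next, use $t=\bar y'$ from Sum, i.e.\ $y'=\bar t$, and $z'=t''$ from Gl. This gives
$$z=t''\,\bar t\,\bar t''\,t.$$
So $z$ lies in the subgroup of $\pi_1(Z)$ generated by $t$ and $t''$.

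Second, check that $y$ commutes with both generators. By item (2), $[y,t]=1$. By Sum, $y=y''\bar x''\bar y''$. The element $t''$ commutes with $x''$ and with $y''$ by tCen$''$, so it commutes with $y$.

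Therefore $z$ commutes with $y$, hence with $\bar y$, and $[z,\bar y]=1$ in $\pi_1(Z)$. All relations used are those listed in items (1)--(3). The ones in (1) hold in $\pi_1(P\setminus\Sigma_{P,\delta})$ by Lemma~\ref{Pcomp}, so they hold in $\pi_1(Z)$ as well.

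There is no real obstacle. The only care needed is in the sign and inverse bookkeeping when expanding $[z',\bar y']$ and inverting $x'$, so that the cancellation $y'\bar y'$ occurs and leaves the clean expression above.
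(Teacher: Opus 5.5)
Your proof is correct and is essentially the paper's argument. The paper rewrites $[z,\bar y]$ as the nested commutator $[[\bar y', y'z'\bar y'],\bar y]$ and checks that $\bar y$ commutes with $\bar y'=t$ and with $y'z'\bar y'$ (via $z'=t''$ and tCen$''$), while you use the same relations to write $z=z'y'\bar z'\bar y'=t''\bar t\,\bar t''t$ directly and observe that $y$ centralizes both $t$ and $t''$; your version is slightly cleaner, and its conclusion does not depend on the commutator convention, since $z=y'\bar x'\bar y'$ is in any case a word in $y'=\bar t$ and $z'=t''$.
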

\begin{proof}   Using  $z= y'\bar x' \bar y' $  in Sum and $x'=[z',\bar y']$ in Sur$'$:
\begin{equation}\label{magic}[z,\bar y]=[y'\bar x' \bar y' ,\bar y]
=[y' [\bar y',z'] \bar y' ,\bar y]=
[[\bar y', y'z'\bar y'], \bar y].\end{equation}  
Using  $  t=\bar y' $ in Sum and $[y,t]=1$ in tCen,  it follows that
 $[ \bar y', \bar y]=[t, \bar y]=1$.  Using  $z'=t''$ in Gl,   $y=y''\bar x''\bar y''$  in Sum,  and $[t'', y'']=1, [t'',x'']=1$ in  tCen$''$,   it follows that
 $$[y'z'\bar y', \bar y]=y'[z',\bar y]\bar y'=y'[t'', y''x''\bar y'']\bar y'
=1.$$
Hence $\bar y$ commutes with $\bar y '$ and with $y'z'\bar y'$, so that $y$ commutes with $[\bar y', y'z'\bar y']$.   Now (\ref{magic}) implies that $[z,\bar y]=1$.\footnote{S. Baldridge showed me this  trick in January 2007.}
\end{proof}
Returning to the proof that $Z$ is simply connected: using the first surgery relation, Lemma \ref{trick2} implies $x=1$ in $\pi_1(Z)$.   Using Sum, this proves $y''=1$. 
Since $x=1$, the second surgery relation proves $t=1$. Since $y''=1$,  the fifth surgery relation shows  $x''=1$.   
Since $x''=1$, the sixth surgery relation shows $t''=1$, which, using Gl, implies $z'=1$. Also, since $x''=1$, Sum shows that $y=1$.

Since $z'=1$, the third and fourth surgery relations  imply that $x'=1$ and $t'=1$.  Since $t'=1$, Gl implies that $z''=1$.

Since $t=1$, Sum implies that $y'=1$. Since $x'=1$, Sum implies that $z=1$, \color{black} so that all twelve generators  are trivial, and therefore $Z $ is simply connected.
\qed
 
\medskip

  From the construction, it is straightforward to calculate that
the Euler characteristic of $Z$ equals 6, and Novikov additivity implies that the signature of $Z$ equals $-2$. Since $\pi_1(Z)=1$, $H_2(Z)$ has rank 4. The classification of indefinite unimodular forms shows that the intersection form of $Z$ is isometric to $(1)\oplus (-1)^3$. Since $Z$ is smooth, it has vanishing Kirby-Siebenmann invariant. Freedman's theorem \cite{Freedman} shows that $Z$ is homeomorphic to $\CP ^2\# 3\overline{\CP }^2$.  
See Appendix \ref{exotic} for a proof that $Z$ is not diffeomorphic to $\CP ^2\# 3\overline{\CP }^2$.

\section{The cancellation  trick,  and Theorem 2 of \cite{BK} }

Two topics are discussed in this section. First,   we formalize the cancellation trick of Lemma \ref{trick2}.   Secondly, we provide a quick proof
of Theorem 2 of \cite{BK} as a consequence of Proposition \ref{thm1}.

\subsection{Attaching a 4-manifold to $P\setminus \Nbd{\Sigma_{P,\delta}}$}
We isolate the cancellation described in the proof of Lemma \ref{trick2}, which lies at the heart of the proof of Theorem \ref{thm5.1},  in the following proposition, which replaces 
$S\setminus \Nbd{\tilde \Sigma_{S,\delta}}$ by an arbitrary 4-manifold $W$ with boundary $\Sigma\times S^1$.

 The statement is technical in order to be precise, but 
loosely, it asserts that if $b_1$ and $a_2$ in $\pi_1(\partial W)=\pi_1(\Sigma)\times\ZZ$ commute in $\pi_1(W)$, then gluing  $W$ to  $P\setminus \Nbd{\Sigma_P}$ using the diffeomorphism  $\psi$ of Definition \ref{defZ} yields a closed 4-manifold $Z_W$ so that   $a_1$ and $a_2$ commute in $\pi_1(Z_W)$. 

\begin{thm}\label{trick}
Suppose $W$ is a 4-manifold, and $\tilde h_W:\Sigma\times S^1\to \partial W$ a diffeomorphism.  Suppose further that $$[\tilde h_W(b_1, 1), \tilde h_W(a_2 , 1)]=1\text{ in }\pi_1(W).$$
Denote by $Z_W$  the  closed 4-manifold
$$ Z_W=W\cup_{ \tilde h_P\circ (\Psi\times -{\rm Id})\circ \tilde h_W^{-1}   }(P\setminus \Nbd{\Sigma_{P,\delta}}).
$$

Then $$
[ \tilde  h_W(a_1,1),
 \tilde  h_W(a_2,1) ]=1= [y'\bar x' \bar y', y'' \bar x'' \bar y''] \text{ in } \pi_1(Z_W).$$  
 \end{thm}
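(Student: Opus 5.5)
Under the gluing map $\tilde h_P\circ(\Psi\times -{\rm Id})\circ \tilde h_W^{-1}$, the loops $\tilde h_W(a_i,1),\tilde h_W(b_i,1)$ on $\partial W$ are identified in $\pi_1(Z_W)$ with the images under $\tilde h_P$ of $\Psi(a_i),\Psi(b_i)$. By (\ref{glue2}) and Theorem \ref{thmP}(8), this gives
$$\tilde h_W(a_1,1)=y'\bar x'\bar y',\quad \tilde h_W(b_1,1)=\bar y',\quad \tilde h_W(a_2,1)=y''\bar x''\bar y'',\quad \tilde h_W(b_2,1)=\bar y''.$$
I will work with these identifications, with the basepoint conventions of Section \ref{sec7}: $W$ is based at $\tilde h_W(\sigma,1)$, which is glued to $e_z'$. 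The relations of Lemma \ref{Pcomp} hold in $\pi_1(P\setminus\Sigma_{P,\delta})$, and hence they also hold in $\pi_1(Z_W)$. The second equality in the statement is then immediate from the first. So it suffices to show that $a:=y'\bar x'\bar y'$ commutes with $c:=y''\bar x''\bar y''$ in $\pi_1(Z_W)$.

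I will repeat the computation of Lemma \ref{trick2}, with the relations from $S$ replaced by the hypothesis. The relation $x'=[z',\bar y']$ from Lemma \ref{Pcomp} gives, exactly as in (\ref{magic}),
$$a=y'[\bar y',z']\bar y'=[\bar y',\,y'z'\bar y'].$$
It is therefore enough to show that $c$ commutes with both $\bar y'$ and $y'z'\bar y'$.

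For $\bar y'$: this loop is identified with $\tilde h_W(b_1,1)$, and $c$ is identified with $\tilde h_W(a_2,1)$. These commute in $\pi_1(W)$ by hypothesis, hence also in $\pi_1(Z_W)$. This step replaces the use of $t=\bar y'$ and $[y,t]=1$ in Lemma \ref{trick2}.

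For $y'z'\bar y'$: we have $[y'z'\bar y',c]=y'[z',\bar y' c\, y']\bar y'$. Here I expect one small obstacle, which I settle as follows. In Lemma \ref{trick2} the conjugation by $y'$ was harmless, because $\bar y'$ commutes with $c$. That fact is exactly what was just established, so $\bar y' c\,y'=c$. It then remains to show $[z',c]=1$. Using $z'=t''$ together with $[t'',x'']=[t'',y'']=1$ from Lemma \ref{Pcomp}, $t''$ commutes with $c=y''\bar x''\bar y''$. Hence $[y'z'\bar y',c]=1$.

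Combining the two facts, $c$ commutes with $[\bar y',y'z'\bar y']=a$. This gives $[\tilde h_W(a_1,1),\tilde h_W(a_2,1)]=1$. The only care needed beyond this is checking that the signs in (\ref{glue2}) and Theorem \ref{thmP}(8) produce the identifications listed above; the $-{\rm Id}$ factor on $S^1$ affects only the meridian, so it does not enter.
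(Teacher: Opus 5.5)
Your proposal is correct and follows the paper's proof. Both rerun the computation of Lemma \ref{trick2} using the relations of Lemma \ref{Pcomp} in $\pi_1(P\setminus\Sigma_{P,\delta})$, with the hypothesis $[\tilde h_W(b_1,1),\tilde h_W(a_2,1)]=1$ (that is, $[\bar y', y''\bar x''\bar y'']=1$ after gluing) replacing the use of $t=\bar y'$ and $[y,t]=1$; your explicit check that $\bar y' c\,y'=c$, before reducing $[y'z'\bar y',c]$ to $[z',c]$, fills in a step the paper leaves implicit in the identity $[y'z'\bar y',\bar y]=y'[z',\bar y]\bar y'$.
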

 
\begin{proof}  Lemma \ref{trick2}  uses $z'=t''$   from Gl,    $[t'', y'']=1, [t'',x'']=1$ from tCen$''$, 
and $x'=[z',\bar y']$ from Sur$'$ in $\pi_1(P)$ .  These relations hold in $\pi_1(P\setminus \Sigma_{P,\delta})$ by Lemma \ref{Pcomp}. 

  Replace $y=\tilde h_S(a_2,1),z=\tilde h_S(a_1,1),$ and $t=\tilde h_S(b_1,1)$ by $\tilde h_W(a_2,1), \tilde h_W(a_1,1), $ and $\tilde h_W(b_1,1)$ in the proof of Lemma \ref{trick2}.   Note that  hypothesis (2) is the only requirement from $S$ needed for the conclusion. Hence
$$1= [\tilde h_W(a_1,1),
\tilde h_W(a_2,1)]=[\tilde h_P\circ \Psi(a_1) ,\tilde h_P\circ \Psi(a_2) ] =[y'\bar x' \bar y', y''\bar x'' \bar y''].$$
\end{proof}

\subsection{Theorem 2 of \cite{BK}} We next state and prove  Theorem \ref{lemDumb},  a sharpening,  with a shorter proof, of the main technical result of \cite{BK}. The results of this section are not used anywhere else in this article. This section can be read immediately after reading Corollary \ref{cor2}.  \medskip

Set $E'=\TT^2\times (-\ep,0)\subset \TT^4$.  
 
\begin{thm} \label{lemDumb} The fundamental group
 of the complement of  $T_x,T_y$ in $\TT^4\setminus \Nbd{E'\cup T_z}$  has the presentation
 $$
\langle x,y,  z, t\mid {\rm Bor}, [t,x], [t, y], [t,z\bar y\bar z], [t, z x\bar z]\rangle.$$
Moreover, in this group $\mu_x=[  z,\bar y]$ and  $\mu_y=[x,   \bar z]$. 
 The fundamental groups of the   3-tori $\partial \Nbd{T_x}$ and  $\partial \Nbd{T_y}$ are generated by $\{x,t,\mu_x\}$ and $\{y\bar \mu_x, t,\mu_y'=[zx\bar z,\bar z]\}$.
\end{thm}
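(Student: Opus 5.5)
The approach is a Seifert--Van Kampen decomposition of $X:=\TT^4\setminus\Nbd{T_x\sqcup T_y\sqcup T_z\sqcup E'}=(Y^c\times S^1)\setminus\Nbd{E'}$. I read $E'$, in the notation of Theorem \ref{thmP} (4), as the coordinate torus $\TT^2\times\{(-\ep,0)\}$, i.e.\ the slice $z=-\ep$, $t=0$. I will cut $X$ along the $z$-direction near the level $z=-\ep$. The aim is to see how the relation $[t,z]$ of Corollary \ref{cor2} gets replaced by the two ``conjugated'' relations $[t,z\bar y\bar z]$ and $[t,zx\bar z]$.

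Concretely, let $J=[-2\ep,\ep]\subset S^1$ (in the $z$-coordinate). Let $A$ be the part of $X$ with $z\in J$ and $B$ the part with $z\in S^1\setminus \mathrm{int}(J)$. Since $C_x$ and $C_y$ sit at $z=\pm\tfrac\pi 2$, the slab of $Y^c$ over $J$ is $F\times J$, where $F=\TT^2\setminus(\text{disk about }(-\tfrac\pi2,\tfrac\pi2))$. Hence $A\cong F\times\big((J\times S^1)\setminus \text{point}\big)$, and $\pi_1(A)$ is the product of the free group $\langle x,y\rangle$ with the free group $\langle t_+,t_-\rangle$. Here $t_\pm$ are the $t$-circles on the two boundary annuli $z=\ep$ and $z=-2\ep$, and the meridian of $E'$ is $t_+\bar t_-$.

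The piece $B$ is $(Y^c\cut J)\times S^1$, where $Y^c\cut J$ is $Y^c$ cut open along the slab. Since $\pi_1(Y^c)$ is an HNN extension of $\pi_1(Y^c\cut J)$ with stable letter $z$, the group $\pi_1(Y^c\cut J)$ can be read off from the presentation in Proposition \ref{thm1}: it is generated by $x,y$ (the copies on the top face $z=\ep$) and $z x\bar z, z y \bar z$ (the copies on the bottom face), subject to the Borromean relations rewritten without the stable letter. The overlap $A\cap B$ consists of two copies of $F\times S^1$. Gluing the top copy identifies $t_+$ with the $S^1$-factor of $B$, which commutes with $x,y$. Gluing the bottom copy identifies $t_-$ with the same factor, which commutes with $zx\bar z, zy\bar z$. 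One of the two gluings is non-separating, and it contributes the stable letter $z$.

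Assembling the pieces, $t:=t_+$ commutes with $x$ and $y$. Via the HNN letter, $t_-$ is forced to equal $t$ as seen through the conjugate basing, which yields $[t,zx\bar z]=[t,z\bar y\bar z]=1$. No relation $[t,z]$ survives, and the meridian of $E'$ is expressed as a commutator of $t$ and $z$ rather than being a new generator. After Tietze moves eliminating $t_-$ and the meridian, the presentation becomes $\langle x,y,z,t\mid {\rm Bor},[t,x],[t,y],[t,z\bar y\bar z],[t,zx\bar z]\rangle$.

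The identities $\mu_x=[z,\bar y]$ and $\mu_y=[x,\bar z]$ follow as in Corollary \ref{cor2}. The disks and tori in the slice $t=\pi$ (which misses $E'$) realize the Wirtinger relations, so these identities persist. For the boundary $3$-tori, $\partial\Nbd{T_x}$ lies at $z=\tfrac\pi2$, away from the slab, so its generators are $x,t,\mu_x$ exactly as before. For $\partial\Nbd{T_y}$, at $z=-\tfrac\pi2$, the basing path from $e$ must pass the level $z=-\ep$ on the side avoiding $E'$, i.e.\ through $z$ decreasing through $t=\pi$. This rebasing conjugates the $y$-loop and the meridian, producing $y\bar\mu_x$ and $\mu_y'=[zx\bar z,\bar z]$.

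I expect the main obstacle to be exactly this base-point bookkeeping. One must check that the conjugating elements coming from the HNN structure and from the chosen basing segments are precisely $z$ (and not $\bar z$ or a product with $\mu_x$). This is what distinguishes $[t,z\bar y\bar z]$ from, say, $[t,\bar z\bar y z]$, and it is what determines the precise forms $y\bar\mu_x$ and $[zx\bar z,\bar z]$. I would verify this first by drawing the loops in the fundamental domain $[-\pi,\pi]^4$ as in Figure \ref{fig26fig}. As a consistency check, I would also confirm that the abelianization agrees with $H_1$ of the complement computed by Alexander duality.
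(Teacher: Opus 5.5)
Your slab decomposition is a legitimate alternative to the paper's. The paper fibres $(Y^c\times S^1)\setminus\Nbd{E'}$ over the $t$-circle, uses $t$ as the stable letter, and imposes $[t,\gamma]=1$ for $\gamma$ in the image of $\pi_1(\TT^3\setminus\Nbd{C\cup E'})$. Set $Y^c_J:=Y^c\cap\{\ep\le z\le 2\pi-2\ep\}$, which is homotopy equivalent to $\TT^3\setminus\Nbd{C\cup E'}$. Once you eliminate $t_-$ and the meridian of $E'$, your amalgam-plus-HNN gives the same group: $\pi_1(Y^c)*\langle t\rangle$ modulo $[t,g]$ for $g$ in the vertex group $\pi_1(Y^c_J)$. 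The direction you assert, $[t,zx\bar z]$ rather than $[t,\bar z x z]$, is correct, provided $z$ is the stable letter crossing the level $z=-\ep$ away from $E'$ (say at $t=\pi$; the coordinate loop $z$ itself meets $E'$).

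So in both routes the one substantive input is that $\pi_1(Y^c_J)$ is generated by $x,y,zx\bar z,zy\bar z$. Without this, ``$t$ is central in $\pi_1(B)$'' does not reduce to four commutators; a presentation of the vertex group is not needed. The paper proves the generation by collapsing the $z$-direction onto a wedge of two tori with generators $x,\mu_x$ and $y\bar\mu_x,\mu_y'$. Your claim that it ``can be read off from Proposition \ref{thm1}'' is an assertion, not an argument. Either use that collapse, or argue as follows. Projection to $F$ retracts $Y^c_J$ onto each face, so the edge maps are injective. Since $x,y,z$ generate $\pi_1(Y^c)$, Britton's lemma then puts every element of the vertex group in $\langle x,y,zx\bar z,zy\bar z\rangle$.

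The step that actually fails as written is the last one. Your basing path for $\partial\Nbd{T_y}$ (move to $t=\pi$, then decrease $z$ to $-\tfrac\pi2$) is the same basing you used to place $\mu_y$ in the slice $t=\pi$. With it, the longitude is $y$ and the meridian is $\mu_y=[x,\bar z]$, both unconjugated. What changes is the fibre. Sliding the $t$-circle back along that path sweeps through $E'$, and the based fibre equals $\bar z t z$. So your path yields $\{y,\mu_y,\bar z t z\}$, the $\bar z$-conjugate of the stated triple, not $\{y\bar\mu_x,t,\mu_y'\}$.

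It is also not true that the basing path must cross the level $z=-\ep$. The stated triple comes from the path at $t=0$ that increases $z$ from $0$ past $\tfrac\pi2$ to $\tfrac{3\pi}2\equiv-\tfrac\pi2$, which never meets that level. Up to a segment in $\partial\Nbd{T_y}$, this path is homotopic to $z$ followed by yours. Along it the $t$-fibre is literally $t$. The longitude becomes $zy\bar z=y\bar\mu_x$, using $\mu_x=[z,\bar y]$. The meridian becomes $z\mu_y\bar z=zx\bar z\bar x=[zx\bar z,\bar z]=\mu_y'$.
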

 
\noindent{\em Proof.}  The manifold   
$$W:=(\TT^4\setminus \Nbd{E'\cup T_z})\setminus \Nbd{T_x\sqcup T_y } =(Y^c\times S^1)\setminus \Nbd{E'}$$ 
can be described as the mapping torus of the inclusion
$$
 \TT^3 \setminus \Nbd{C_x\sqcup C_y\sqcup (C_z\cup E)}\hookrightarrow \TT^3\setminus \Nbd{C_x\sqcup C_y\sqcup C_z}=Y^c.$$
Hence, using Proposition \ref{thm1},  its fundamental group has an HNN description of the form
$$\pi_1(W,e)=\big(\pi_1(Y^c)*\langle t\rangle\big)
/N=\langle x,y,  z, t\mid {\rm Bor} \rangle/N$$
where $N$ is the normal subgroup generated by 
$\{[\gamma,t] \mid  \gamma\in \pi_1 (\TT^3 \setminus \Nbd{C_x\sqcup C_y\sqcup (C_z\cup E}) \}.$

\bigskip

\begin{wrapfigure}[10]{r}{0.26\textwidth}
\begin{center}
\vskip-.3in
\includegraphics[width=2.3in]{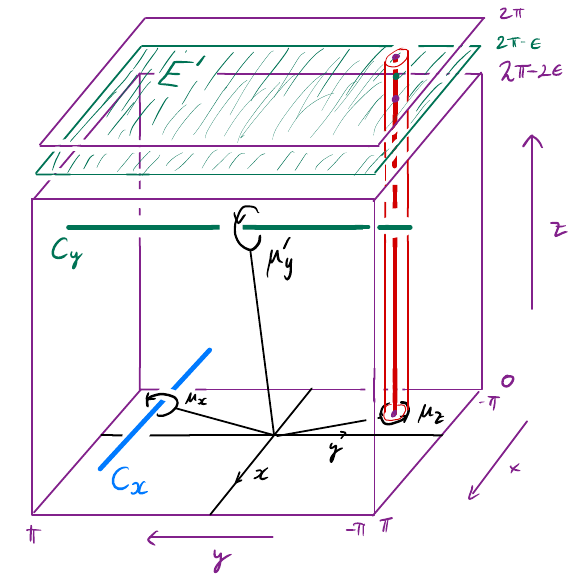}
\vskip-.2in
\caption{\label{fig55fig} \color{red}}
\end{center}
\end{wrapfigure}
To prove the first statement,  it suffices to show that $x,y, [z,\bar y ]$, and $z[ x,\bar z]\bar z$ generate $\pi_1\big(\TT^3 \setminus \Nbd{C_x\sqcup C_y\sqcup (C_z\cup E'}\big)$, where $E'=\TT^2\times\{-\ep\}\subset \TT^3$. This can be seen using Figure \ref{fig55fig}, which shows $E',C$ inside the fundamental domain $[-\pi,\pi]^2\times[0,2\pi]$ for $\TT^3$. Also shown is a meridian $\mu_y'$ for $C_y$ (the meridian $\mu_y$ does not lie in $\TT^3\setminus\Nbd{E'}$).

Collapsing the $z$ coordinate  provides a deformation retract
of $\TT^3\setminus\Nbd{E_\ep\cup C}$ to a wedge of two 2-tori:
the first has fundamental group generators $x$ and $\mu_x$, the second has generators $y\bar\mu_x$ and  $\mu_y'$. 
The inclusion  
$\TT^3 \setminus \Nbd{C\cup E'}\subset Y^c$ takes $\mu_x$ to $[z,\bar y]$, $\mu_z$ to $[y,\bar x]$ and 
 $\mu_y'\text{ to } z\mu_y\bar z=z[x,\bar z]\bar z=[zx\bar z,\bar z].$    Hence 
$$\pi_1(W,e)=\langle x,y,  z, t\mid {\rm Bor}, [t,x], [t, y], [t, [z,\bar y]], [t, z[x,\bar z]\bar z]\rangle.\hskip1.5in$$

The pair of relations $[t,y]=1, [t,[z,\bar y]]=1$ and the pair $[t,y]=1,[t,z\bar y\bar z]=1$ span the same normal subgroup, hence we may replace $[t,[z,\bar y]]=1$ by $[t,z\bar y\bar z]=1$.
Similarly $[t,x]$ and $[t, z[x,\bar z]\bar z]$ span the same normal subgroup as $[t,x]$ and $[t, z x\bar z]$.  This proves the first statement.

From Figure 5 one sees that the loops $x,t,\mu_x$ generate the fundamental group of $\partial \Nbd{T_x}$, and $y\bar \mu_x, t,\mu_y'=[zx\bar z,\bar z]$ generate the fundamental group of $\partial \Nbd{T_y}$.
\qed

\medskip
\begin{wrapfigure}[10]{r}{0.4\textwidth}
\begin{center}
\vskip-.3in
\includegraphics[width=3.3in]{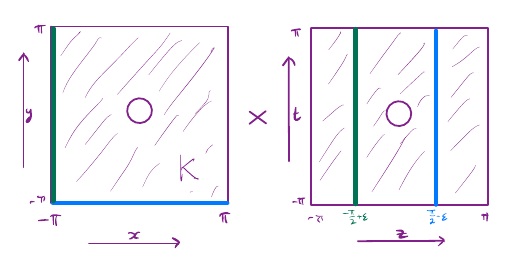}
\vskip-.2in
\caption{\label{fig56fig} \color{red}}
\end{center}
\end{wrapfigure}
\subsubsection{Comparison to \cite[Theorem 2]{BK}}
 Define  the diffeomorphism 
 $\eta:\TT^4\to \TT^4=(\RR/2\pi\ZZ)^4$ by $$\eta(a,b,c,d):= (a-\tfrac\pi 2, b+\tfrac\pi 2, c+\ep, d),\hskip2.5in $$
with inverse $
\eta^{-1}(x,y,z,t)=(x+\tfrac\pi 2,y-\tfrac\pi 2,z+\ep,t).$
Let $K:= \{(x,y)\in \TT^2\mid {\rm dist}((x,y),(0,0))\ge\ep\}$, a punctured torus. 
The restriction $\eta:K\times K \to  \TT^4$ is an embedding with image $\TT^4\setminus\Nbd{E_\ep\cup T_z}$.  Moreover
  $$
  \eta^{-1}(T_x)=\{(a, -\pi, \tfrac\pi 2-\ep,d)\mid a,d\in S^1\}  \text{ and }
  \eta^{-1}(T_y)=\{(\pi,b, - \tfrac\pi 2-\ep,d)\mid b,d\in S^1\}.
$$
  These are illustrated in Figure \ref{fig56fig}.
 Hence Theorem \ref{lemDumb} provides a presentation for the fundamental group of the complement of   $ \eta^{-1}(T_x)\cup \eta^{-1}(T_y)$ in $K\times K$, a product of 
 two punctured tori.  
 
Theorem 2 of \cite{BK} provides a set of generators and some relations for the fundamental group of 
the complement of a pair $T_1,T_2$ of coordinate tori in the product of two punctured tori $K\times K$ (the second factor is denoted $H$ in \cite{BK}).  It asserts that 
$\pi_1(K\times K\setminus (T_1\sqcup T_2))$ is generated
by four loops $X,Y,A,B$ (these are denoted by the lower case letters $x,y,a,b$ in \cite{BK}; we use upper case to avoid conflict). Moreover, it asserts that the relations
 $
[X,A],[Y,A],~ [Y, BA\bar B],~ [[X,Y],B],$ and $ [X,[A,B]]
$ 
hold in $\pi_1(K\times K\setminus (T_1\sqcup T_2))$.
(It also asserts that the additional relation $[Y,[A,B]] $ holds
in $\pi_1(K\times K\setminus (T_1\sqcup T_2))$; the reader can verify easily that this is a consequence of $[Y,A]$ and $[Y, BA\bar B]$.)

Using $[X,A]=1$, one can replace $[X,[A,B]]=1$ by $[A,\bar B X B]=1$. One can also replace $[Y, B A \bar B]=1$ by $[\bar B Y B,   A  ]=1$.
Hence Theorem 2 of \cite{BK} can be restated as: $\pi_1(K\times K\setminus (T_1\sqcup T_2))$ is a quotient of 
$$\langle X,Y,A,B\mid [X,A], [Y,A],  [A,\bar  B Y B], [A,\bar B X B],[[X,Y],B]\rangle.$$
The theorem also asserts that the based meridians $\mu_1,\mu_2$ of the tori $T_1, T_2$ are given by  $\mu_1=[\bar B,\bar Y]$   and $\mu_2=[\bar X, B]$, and that the peripheral fundamental group   $\pi_1(\partial \Nbd{T_1})$ is generated by $\mu_1, X,$ and $A$, and    $\pi_1(\partial \Nbd{T_2})$ is generated by $\mu_2, Y,$ and $BA\bar B$.

\medskip

It is easy to check that the substitutions 
$$
X\mapsto \bar x,~ Y\mapsto y,~ A\mapsto t, B\mapsto \bar z$$
define  a surjective homomorphism to $
\langle x,y,  z, t\mid {\rm Bor}, [t,x], [t, y], [t,z\bar y\bar z], [t, z x\bar z]\rangle $
which sends the triple $ \{\mu_1, X,A\}$   to $\{ \mu_x, \bar x, t\}$, and the triple $\{\mu_2, Y, BA\bar B\}$ to $\{\mu_y, y, \bar z t z\}$. 
Conjugating the triple $\{\mu_y, y, \bar z t z\}$ by $z$ yields
$$\{z[x,\bar z]\bar z, zy\bar z, t\}= \{[zx\bar z, \bar z], z y\bar z, t\}= \{\mu_y',   y\bar \mu_x, t\}
$$
Hence Theorem \ref{lemDumb} implies Theorem 2 of \cite{BK}; the paths from the base point to the  torus $T_y=\eta(T_2)$
in the two articles differ by precomposition with $z$.

\section{A simply connected homology $\CP ^2 \#2\overline {\CP }^2$ }  \label{DA}
Akhmedov-Park \cite{AP2}  introduce another pair of building blocks which they label 
 $Y'(1)$ and $Z''(1,1)$, each containing embedded genus two surfaces of square zero, and prove that the  surface sum $Y'(1)\#_\Sigma Z''(1,1)$ is a simply connected (symplectic, irreducible, hence exotic) homology $\CP ^2
\#2\overline {\CP }^2$.  
In this section we produce a similar example using the manifolds $\TT^4_\sur$ and $P$ constructed above. 
The  genus 2 surface $\tilde \Sigma_{AP}$ in $\TT^4\#\overline{\CP }^2$ described below is identical with the one introduced in \cite{AP2}.

\subsection{A  torus $E_{AP}$ in $\TT^4_\sur$ satisfying $[E_{AP}]=2[E]$.}\label{em}

 \begin{wrapfigure}[11]{r}{0.33\textwidth}
\begin{center}
\vskip-.4in
\includegraphics[width=2.6in]{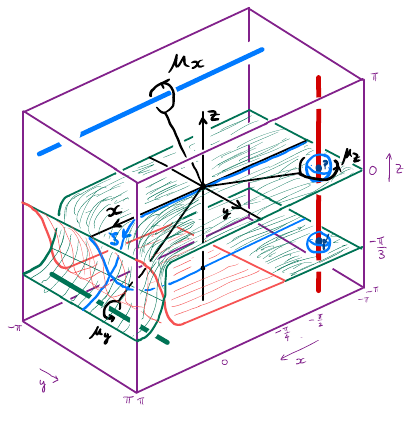}
\vskip-.2in
\caption{\label{fig44fig}  The torus $E_{AP}$\color{red}}
\end{center}
\end{wrapfigure}
We begin with the construction of a (symplectic) torus $E_{AP}$ in $\TT^4$ which intersects $T_z$ positively in two points.

\medskip

Figure \ref{fig44fig} shows an {\em embedded} (symplectic) torus $E_{AP}$ in $\TT^2\times [-\tfrac\pi 3,0]\times[0, \tfrac\ep 4]$ which represents $2[E]$ in $H_2(\TT^4)$. The annulus shaded orange is moved slightly into $\{t>0\}$. Also shown are the curves $x,y,z,\mu_x,\mu_y,\mu_z$ defined in Section \ref{sect2} and Figure \ref{fig26fig}. The intersection of $E_{AP}$ and $T_z$ consists of the point $p$ and another point $p'$.
A loop $\xi$ is also shown; it lies on $E_{AP}$, as do $y$ and $\mu_z$. The loops $z,t,\mu_x,$ and $\mu_y$ do not lie on $E_{AP}$, they intersect $E_{AP}$ only at the point $e$. Only part of the loop $x$ lies on $E_{AP}$.

\vskip.2in

  \begin{wrapfigure}[6]{r}{0.35\textwidth}
\begin{center}
\includegraphics[width=2.4in]{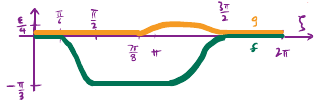}
\caption{\label{fig42fig} $f$ and $g$}
\end{center}
\end{wrapfigure}
A precise construction of $E_{AP}$ is: let $g:[0,2\pi]\to [0, \tfrac\ep 4]$ be a smooth function which is positive on $(\tfrac{7\pi} 8, \tfrac{3\pi}2)$, equals $\tfrac\ep 4$ on $[\tfrac{15\pi} {16}, \tfrac{23\pi}{16}]$  and zero elsewhere.
  Let $f:[0, 2\pi]\to [-\tfrac\pi 3,0]$ be a smooth function which vanishes on $[0, \tfrac\pi 6]\cup [\tfrac{3\pi}2,2\pi]$ and equals $-\tfrac\pi 3$ on
 $[\tfrac\pi2, \tfrac{7\pi} 6]$. These are graphed in Figure \ref{fig42fig}.
 Define a smooth function $m:S^1\times S^1\to \TT^4$ by
$$m(\xi,y):= (2\xi, y, f(\xi), g(\xi)).\hskip1.5in$$  The function $m$ embeds $S^1\times S^1$ into $\TT^4$, missing $T_x$ and $T_y$.  
 Define $E_{AP}$ to be the image of the embedding $m$, and use $m$ to define the (embedded) loop $$\xi:S^1\to E_{AP}, ~\xi(\theta)=m(\theta,0),~ 0\leq\theta\leq 2\pi.$$ 
Clearly, $\pi_1(E_{AP},e)=\langle \xi, y\mid [\xi,y]\rangle$, and from Figure \ref{fig44fig} one sees that 
$\xi$ is sent to $x^2$ in $\pi_1(\TT^4_\sur,e)$.
 The torus $E_{AP}$ intersects $T_z$ transversely and positively in two points: the point 
$p=(-\tfrac\pi 2,\tfrac\pi 2, 0,0)$ and a second point $p':=(-\tfrac\pi 2,\tfrac\pi 2, - \tfrac\pi 3,0)$. Therefore $E_{AP}\cdot T_z=2$.

\subsection{A genus 2 surface $\tilde\Sigma_{AP}$ in $\TT^4_\sur\#\overline{\CP }^2$}
 
 Resolving $E_{AP}\cup T_z$ at $p$ results in the immersed genus two surface 
  $$\Sigma_{AP}=(E_{AP} \setminus \Nbd{p})\cup A\cup T_{z,0}\to \TT^4_\sur,$$
  illustrated in Figure \ref{fig40fig}. Take  $A$ to be  the same annulus in a neighborhood of $p$ as used  in the construction of $\Sigma_S$ (\ref{eq3.6}). 
The $\ep$-neighborhoods of $p'$ in $E_{AP}$ and $T_z$ are also shown.

  \begin{wrapfigure}[8]{r}{0.55\textwidth}
 \begin{center}
   \vskip-.3in
\includegraphics[width=4in]{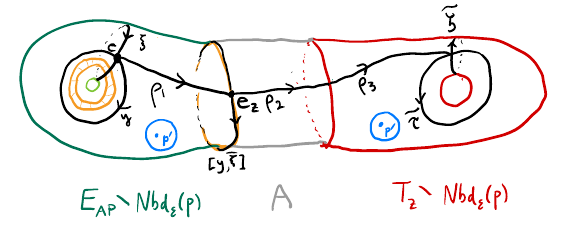}
 \vskip-.1in
\caption{\label{fig40fig} The surface $\Sigma_{AP}$. \color{red} }
\end{center}
\end{wrapfigure}
 The surface $\Sigma_{AP}$ has a single, transverse,  self-intersection point, at $p'$.   Moreover $\Sigma_{AP}\cdot \Sigma_{AP}=(E_{AP}+T_z)\cdot(E_{AP}+T_z)=  E_{AP}^2 +2E_{AP}\cdot T_z+T_z^2=4$.

  The  based loops $\zeta=(\rho_1\rho_2\rho_3)\tilde \zeta (\rho_1\rho_2\rho_3)^{-1}$ and $\tau=(\rho_1\rho_2\rho_3)\tilde \tau (\rho_1\rho_2\rho_3)^{-1}$ (see (\ref{tildezetatau}) and  Proposition \ref{lem1.33}), as well as the loops $\xi, y$ and $\mu_z$, lie on $\Sigma_{AP}$. 
   The  loops $\xi, y, \zeta,\tau$ in $\Sigma_{AP}$ based at $e$ satisfy the surface relation $$
 [y,\bar\xi][\zeta,\tau]=1\hskip2.5in$$ in $\pi_1(\Sigma_{AP}),$ and the map $\pi_1(\Sigma_{AP},e)\to \pi_1(\TT^4_\sur,e)$ takes $\xi$ to $x^2$.  Moreover $[\zeta,\tau]= \bar\mu_z$ in $\pi_1(\Sigma_{AP},e)$.  See Figures \ref{fig44fig} and \ref{fig40fig}.

\medskip
  
  Blow up $\TT^4_\sur$ once, at $p'$, and denote the result  $$R:=\TT^4_\sur\#\overline{\CP }^2.$$
 The blow-down map $R\to \TT^4_\sur$, collapsing the exceptional 2-sphere to a point,  induces an isomorphism on fundamental groups.  In particular Proposition \ref{prop2.1} implies that
 $$\pi_1(R,e)=\langle x,y,z,t\mid [y,\bar x], {\rm Rel}\rangle.$$

   Let $\tilde \Sigma_{AP}\subset R$ denote the proper transform of $\Sigma_{AP}$.    Then $\tilde \Sigma_{AP}$ is an {\em embedded} genus 2 surface in $R$ satisfying $\tilde \Sigma_{AP}\cdot \tilde \Sigma_{AP}=0$ (see Appendix \ref{CP} and Proposition 2.3.5 of \cite{GS}). In particular, $\tilde \Sigma_{AP}$ admits a unit normal field.

 \subsection{Framing $\tilde \Sigma_{AP}$}  
 \begin{wrapfigure}[9]{r}{0.6\textwidth}
\centering
   \vskip-.4in
\includegraphics[width=4.3in]{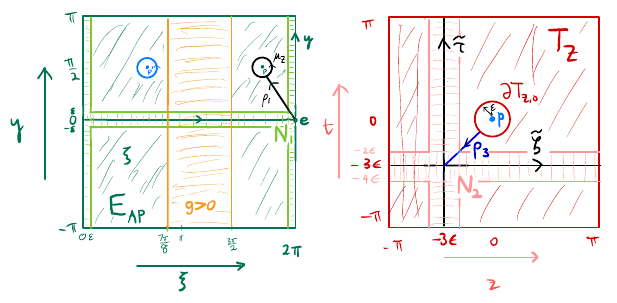}
 \vskip-.2in
\caption{\label{fig45fig} $\xi\cup y\subset N_1\subset E_{AP}$ and $\tilde \zeta \cup \tilde \tau\subset N_2\subset T_z$. \color{red}}
\end{wrapfigure}
\begin{df} Let 
$N_1$ denote the $\ep$-neighborhood of $\xi \cup y$  in $  \tilde \Sigma_{AP}$, and let 
$ N_2$  denote an $\ep$-neighborhood of  $\tilde \tau\cup\tilde \zeta $ in $  \tilde \Sigma_{AP}$. The $N_i$ are punctured tori with four corners on their boundary.

 Notice that  $N_1\subset  E_{AP}\setminus \Nbd{p\sqcup p'}$ and $N_2\subset T_z\setminus \Nbd{p\sqcup p'}$. These neighborhoods are illustrated in Figure \ref{fig45fig}.\end{df}

\begin{lem}\label{framing} 
There exists a transverse vector field $\nu$ on $\tilde\Sigma_{AP}$ satisfying: 
\begin{enumerate}
\item the restriction of  $\nu$ to $N_1$ equals $ \tfrac{\partial}{\partial z} +\tfrac{\partial}{\partial t}$,
\item the restriction of $\nu$ to $N_2$ equals $-\tfrac{\partial}{\partial x}$.
\end{enumerate}
This choice of $\nu$ ensures that for small enough $\delta>0$,
\begin{enumerate}
\item[(3)] the $\delta$-push off $N_{1,\delta}=\exp(\delta\nu(N_1))$ equals
$N_1+ (0,0,\delta,\delta)$,
\item[(4)] the $\delta$-push off $N_{2,\delta}$  equals $N_2+ (-\delta,0,0,0)$.
\end{enumerate}

\end{lem}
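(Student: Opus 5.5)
The plan is to split the statement into a local part on $N_1$ and $N_2$, and a global extension over the rest of $\tilde\Sigma_{AP}$ controlled by the Euler number of the normal bundle.

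\emph{Local transversality.} Near $N_1$ the surface $\tilde\Sigma_{AP}$ coincides with $E_{AP}$. There it is parametrized by $m(\xi,y)=(2\xi,y,f(\xi),g(\xi))$, so its tangent plane is spanned by $(2,0,f'(\xi),g'(\xi))$ and $(0,1,0,0)$. Adding $(0,0,1,1)$ gives three vectors whose $3\times 3$ minor in the $x,y,z$ coordinates equals $2$. Hence $\tfrac{\partial}{\partial z}+\tfrac{\partial}{\partial t}$ is transverse along all of $E_{AP}$, in particular on $N_1$. On $N_2\subset T_z$ the tangent plane is spanned by $\tfrac{\partial}{\partial z},\tfrac{\partial}{\partial t}$, so $-\tfrac{\partial}{\partial x}$ is transverse there. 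Both regions lie in the open subset of $R$ that is an open subset of $\TT^4$: they miss $\Nbd{p\sqcup p'}$, the blown-up point, and $\Nbd{T_x\cup T_y}$. So these formulas make sense. We also note that $N_1$ and $N_2$ are disjoint.

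\emph{Global extension.} A transverse field is the same, up to homotopy, as a nowhere-zero section of the normal bundle $\nu(\tilde\Sigma_{AP},R)$, obtained by projecting orthogonally. Enlarge $N_1,N_2$ slightly by collars on which the prescribed fields are still defined. The complement $C=\tilde\Sigma_{AP}\setminus(N_1\cup N_2)$ is a genus-two surface minus two disjoint punctured tori, which is an annulus. The obstruction to extending the given nowhere-zero section from $\partial C$ across $C$ is the relative Euler number of $\nu|_C$ relative to the boundary sections. Since the sections already extend without zeros over $N_1$ and $N_2$, this relative Euler number equals the Euler number of the whole normal bundle, which is $\tilde\Sigma_{AP}\cdot\tilde\Sigma_{AP}=0$.

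Concretely, fix a trivialization of $\nu|_C$ (the annulus is contractible to a circle, so one exists). The two boundary sections have winding numbers $w_1,w_2$ in this trivialization, and the condition $w_1=w_2$ (with appropriate boundary orientations) is exactly the vanishing of $e(\nu)$. Homotoping one boundary map to the other across $C\cong S^1\times[0,1]$ then gives the extension. Finally, add back a tangential component or rescale as needed, so that $\|\nu\|<2$ as Lemma \ref{pushoff} requires; on $N_1$ the norm is $\sqrt2$ and on $N_2$ it is $1$.

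\emph{Push-offs.} Statements (3) and (4) follow because $N_1$ and $N_2$ lie in a region where $R$ is isometric to an open subset of $\TT^4$, where $\exp(v,w)=v+w$. Then $\exp(\delta\nu(s))=s+\delta(0,0,1,1)$ on $N_1$ and $s+\delta(-1,0,0,0)$ on $N_2$. Choosing $\delta$ small keeps these points away from the boundary of that region.

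The step I expect to be the main obstacle is the relative Euler number bookkeeping. One must check that the orientation conventions on the two boundary circles of the annulus make the winding-number difference equal $e(\nu)$, and not, say, something involving the framings of $N_1$ and $N_2$. I would argue it cleanly by noting that any nowhere-zero section over all of $\tilde\Sigma_{AP}$ exists because $e=0$. Comparing it with the prescribed sections on $N_1$ and $N_2$ gives maps $N_i\to S^1$. Over the punctured tori these can be arbitrary on $H_1$, but the value on $\partial N_i$ is nullhomologous, so it has degree $0$. The prescribed data therefore agree on the boundary circles, up to homotopy, with the restriction of a global section, and they extend.
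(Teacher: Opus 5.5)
Your proposal is correct and is essentially the paper's argument. The paper also starts from an arbitrary transverse field, which exists since $\tilde\Sigma_{AP}\cdot\tilde\Sigma_{AP}=0$, and modifies it by a map $N_1\sqcup N_2\to S^1$; it justifies the extension by noting that $H^1(\tilde\Sigma_{AP})\to H^1(N_1\sqcup N_2)$ is an isomorphism (long exact sequence of the pair), which your final paragraph verifies by hand using the degree-zero boundary values extended across the annulus. Your explicit check that $\tfrac{\partial}{\partial z}+\tfrac{\partial}{\partial t}$ and $-\tfrac{\partial}{\partial x}$ are actually transverse on $N_1$ and $N_2$ supplies a point the paper leaves implicit.
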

\begin{proof}  The restriction 
$$H^1(\tilde \Sigma_{AP })=[\tilde \Sigma_{AP },S^1]\to H^1(N_1\sqcup N_2)=[N_1\sqcup N_2,S^1]$$ is an isomorphism, by the long exact cohomology sequence for the pair $(\tilde \Sigma_{AP },N_1\sqcup N_2)$, and hence any transverse vector field in $\tilde \Sigma_{AP}$ can be modified on $N_1\sqcup N_2$ to satisfy (1) and (2).  Assertions (3) and (4) follow from the fact that in $\RR^4$, $\exp_p( r{\bf v})=p+r{\bf v}$, see Section \ref{pushoff2}.
 \end{proof}

\subsection{The building block $(R,\tilde h_R)$}

Choose  $0<\delta<\tfrac {\ep ^2} {8}$     small enough so that  the push off
$$\tilde \Sigma_{AP,\delta}:=\exp(\delta\nu(\tilde \Sigma_{AP}))$$
is embedded.  
 The loop $\mu_{R,\delta}=\mu_{S,\delta}$ based at $e$ (\ref{mudel}) is the boundary of the normal disk to $\tilde \Sigma_{AP,\delta}$ at $\nu(e)=(0,0,\delta,\delta)$. 
Lemmata  \ \ref{pushoff},    and    \ref{lem3.1},  and Proposition \ref{lem1.33} imply the following. 
 
\begin{thm}\label{R} There exists an embedding
$$\tilde h_{R}: \Sigma\times D^2\to R$$ satisfying
\begin{enumerate}
\item $\tilde h_{R}(\Sigma\times 0)=\tilde\Sigma_{AP,\delta}$, $\tilde h_{R}(\Sigma\times 1)=\tilde\Sigma_{AP}$,
$\tilde h_{R}(\sigma,1)=e$, 
\item 
$\tilde h_{R}(a_1,1)= \zeta,~\tilde h_{R}(b_1,1)=\tau ,~\tilde h_{R}(a_2,1)= y,~\tilde h_{R}(b_2,1)=\bar  \xi,
$
and $\tilde h_{R} (\sigma\times S^1)=\mu_{R,\delta},$ and
 \item the composite 
 $$ \pi_1(\Sigma\times\{1\},\sigma)\xrightarrow{\tilde h_{R } }
 \pi_1(\tilde\Sigma_{AP},e)
\xrightarrow{i_R} \pi_1(R,e)$$ is given by
$$ a_1\mapsto z, ~ b_1\mapsto t, ~ a_2\mapsto   y, ~ b_2\mapsto \bar x^2.$$\qed
\end{enumerate}
\end{thm}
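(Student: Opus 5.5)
The plan is to build $\tilde h_R$ in two stages, exactly as was done for $\tilde h_S$: first a diffeomorphism $h_R\colon(\Sigma,\sigma)\to(\tilde\Sigma_{AP},e)$ realizing the prescribed marking, and then its extension to a tubular neighborhood of the push-off.

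\emph{Step 1: the marking.} The loops $y$ and $\xi$ lie on $N_1\subset E_{AP}\setminus\Nbd{p\sqcup p'}$ and meet transversely in the single point $e$. The loops $\tilde\zeta,\tilde\tau$ lie on $N_2\subset T_{z,0}$ and meet in one point. The arc $\rho=\rho_1\rho_2\rho_3$ lies in $E_0\cup A\cup T_{z,0}$, away from the blown-up point $p'$, so it lies on $\tilde\Sigma_{AP}$. Its interior misses all four circles, since $\rho_1$ meets $x\cup y$ only at $e$ and misses the portion of $E_{AP}$ that deviates from $E$.

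I will reuse the angle-bisection argument of Proposition \ref{lem1.33}. The region near $\rho$ and near $p$ coincides with the corresponding region of $\Sigma_S$, and the boundary of $N_1$ read from $e$ is $[y,\bar\xi]$. This gives the surface relation $[y,\bar\xi][\zeta,\tau]=1$ in $\pi_1(\tilde\Sigma_{AP},e)$, which was recorded before the blow-up. The blow-up at $p'$ only replaces a neighbourhood of the double point by an annulus, and that annulus is disjoint from $N_1\cup N_2\cup\rho$.

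For the Lemma \ref{lem3.1} hypotheses I take $(c_1,d_1)=(\tilde\zeta,\tilde\tau)$ based via $\rho$, and $(c_2,d_2)=(y,\bar\xi)$ at $e$. I reorder and rebase as in Proposition \ref{lem1.33}, putting the base point at $e$ and using $\bar\rho$ as the connecting arc. Lemma \ref{lem3.1} then yields $h_R$ with $a_1\mapsto\zeta$, $b_1\mapsto\tau$, $a_2\mapsto y$, $b_2\mapsto\bar\xi$. The orientation check $y\cdot\bar\xi=1$ and $\tilde\zeta\cdot\tilde\tau=1$ comes from Figure \ref{fig44fig} and from (\ref{tildezetatau}).

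\emph{Step 2: the framing.} I take the transverse field $\nu$ of Lemma \ref{framing} and rescale it so that $\|\nu\|<2$; near $e$, $\|\tfrac{\partial}{\partial z}+\tfrac{\partial}{\partial t}\|=\sqrt2<2$. Lemma \ref{pushoff} then extends $h_R$ to $\tilde h_R\colon\Sigma\times D^2\to R$ with $\tilde h_R(\Sigma\times0)=\tilde\Sigma_{AP,\delta}$ and $\tilde h_R(\cdot,1)=h_R$, which gives items (1) and (2) except for the meridian.

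For the meridian, the explicit formula of Lemma \ref{pushoff} at $f=e$ uses $\nu=\tfrac{\partial}{\partial z}+\tfrac{\partial}{\partial t}$ and the unit vector $w$ orthogonal to $T_eE_{AP}=\mathrm{span}(\partial_x,\partial_y)$ and $\nu$, so $w=\pm(\partial_z-\partial_t)/\sqrt2$. It is traced out as the circle of radius $\sqrt2\delta$ centered at $(0,0,\delta,\delta)$ in the $zt$-plane. After reparametrizing $\theta$ and choosing the sign of $w$ by orientation, this is (\ref{mudel}), hence $\mu_{R,\delta}=\mu_{S,\delta}$.

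\emph{Step 3: images in $\pi_1(R,e)$.} Because $\rho$ lies in the convex $5\ep$-neighbourhood of $\rho_1$, which misses $T_x,T_y$ and $p'$, we have $i_R(\zeta)=z$ and $i_R(\tau)=t$, just as in Proposition \ref{lem1.33}. Also $i_R(y)=y$, and $\xi\mapsto x^2$ by the homotopy read off from Figure \ref{fig44fig}. The blow-down induces an isomorphism on $\pi_1$, so these identities hold in $\pi_1(R,e)$. This gives item (3).

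\emph{Main obstacle.} The delicate step is Step 1: checking that $\rho$ and the loops genuinely give a symplectic basis with the stated based commutator relation on the new surface. Here $E_{AP}$ meets $T_z$ twice, and $\xi$ is not a coordinate loop. I would handle this by noting that everything relevant lives in the part of $E_{AP}$ agreeing with $E$ near $e$ and $p$, where the computation of Proposition \ref{lem1.33} transfers verbatim.
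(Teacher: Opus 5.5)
Your proposal is correct and follows the paper's own route. The paper derives Theorem \ref{R} by combining three things: Lemma \ref{lem3.1} applied to the relation $[y,\bar\xi][\zeta,\tau]=1$ (checked as in Proposition \ref{lem1.33}), Lemma \ref{pushoff} applied to the field $\nu$ of Lemma \ref{framing}, and the identifications $i_R(\zeta)=z$, $i_R(\tau)=t$, $\xi\mapsto x^2$. You do the same, with more detail.

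Two slips should be fixed.

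\emph{The blow-up.} Blowing up at $p'$ does not replace a neighbourhood of the double point by an annulus. That describes resolving, which would raise the genus to three and destroy the relation you need. The blow-up instead separates the two sheets through $p'$. So $\tilde\Sigma_{AP}$ is a diffeomorphic copy of the domain of the immersion $\Sigma_{AP}$, and the relation recorded there holds verbatim in $\pi_1(\tilde\Sigma_{AP},e)$.

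\emph{The meridian.} At $f=e$ you have $\|\nu(e)\|=\sqrt2$ but $\|w\|=1$. So the formula of Lemma \ref{pushoff} traces an ellipse in the $zt$-plane with semi-axes $\sqrt2\delta$ and $\delta$, not the round circle (\ref{mudel}), and reparametrizing does not suffice. There are two easy fixes:
\begin{itemize}
\item use the straight-line based homotopy between the two curves inside the punctured normal disk, which meets $\tilde\Sigma_{AP,\delta}$ only at its center $(0,0,\delta,\delta)$; or
\item replace $w$ by $\|\nu\|w$ in the formula, with $w=(\tfrac{\partial}{\partial t}-\tfrac{\partial}{\partial z})/\sqrt2$, which gives (\ref{mudel}) exactly.
\end{itemize}
The paper is equally terse here.

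Finally, when arranging $\|\nu\|<2$, modify $\nu$ only away from $N_1\cup N_2$, so that items (3) and (4) of Lemma \ref{framing} still hold.
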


\subsection{Some relations in $\pi_1(R\setminus \tilde\Sigma_{AP,\delta})$}

Unlike    the surface $\tilde \Sigma_S$ in $S$, the surface $\tilde \Sigma_{AP}$ in $R$ is not equipped with a smooth dual 2-sphere (the exceptional 2-sphere intersects it in two points), and hence application of the SVK theorem requires
establishing some relations in $\pi_1(R\setminus \tilde \Sigma_{AP,\delta})$.  

\medskip

\color{black}
 \noindent{\bf Proof strategy.} The following technical lemmata assert  that various loops and homotopies in $R$ miss $\tilde\Sigma_{AP,\delta}$. The proofs in each case is different, but generally speaking, the strategy is first, to show that the loop or homotopy $L$ misses an $\ep$-neighborhood of $T_x\sqcup T_y\sqcup p\sqcup \mathfrak{E}$ (with $\mathfrak{E}$ the exceptional curve). Notice that the complement of $\Nbd{T_x\sqcup T_y\sqcup p\sqcup p'}$ in $\TT^4$ is equal to the complement of $\Nbd{T_x\sqcup T_y\sqcup p\sqcup \mathfrak{E}}$ in $R$.

The second step is to use coordinates in $\TT^4$ to check that $L$ misses 
$$\tilde\Sigma_{AP,\delta}\cap \TT^4\setminus \Nbd{T_x\sqcup T_y\sqcup p\sqcup p'}.$$
 \begin{wrapfigure}
[7]{r}{0.4\textwidth}
  \centering
\includegraphics[width=2.5in]{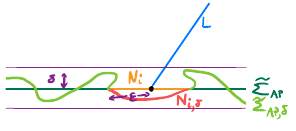}
 \vskip-.2in
\caption{\label{fig60fig}\color{red} }
 \end{wrapfigure} 
This second step then breaks down further: first,  Lemma \ref{framing} is used to show that $L$ misses $N_{1,\delta}\sqcup N_{2,\delta}\subset \tilde\Sigma_{AP}$, and second,  
the distance from $L$ to $\tilde\Sigma_{AP}\setminus (N_1\sqcup N_2\sqcup \Nbd{p\sqcup p'})$ is shown to be greater than $\delta$, and therefore $L$ misses the $\delta$ push off of $\tilde\Sigma_{AP}\setminus (N_1\sqcup N_2\sqcup \Nbd{p\sqcup p'})$. 
Figure \ref{fig60fig} gives a ``side view'' schematic of the second step.  It should be compared to Figures \ref{fig47fig} and \ref{fig59fig}.

 \begin{lem}\label{9.2}\hfill
 \begin{enumerate}
\item The loops $\xi, y,\zeta, \tau$ and $\mu_z$ lie on $\tilde \Sigma_{AP}$ and hence in
$R\setminus \tilde \Sigma_{AP,\delta}$.

\item The loop $\mu_{R,\delta}$   and the loops $x, z, t,   \mu_x, \mu_y$ lie  in $R\setminus \tilde \Sigma_{AP,\delta}$.

\end{enumerate}
\end{lem}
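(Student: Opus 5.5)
Part (1) is essentially bookkeeping; part (2) follows the two-step proof strategy stated just before the lemma, applied loop by loop.

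\textbf{Part (1).} The loops $\xi$, $y$, $\mu_z$ lie on $E_{AP}\setminus\Nbd{p\sqcup p'}$ (Figure \ref{fig44fig}). The loops $\zeta,\tau$ lie on $E_0\cup A\cup T_{z,0}$ away from $p'$, since $\rho_1\rho_2\rho_3$ stays in the $5\ep$-neighborhood of $\rho_1$, which misses $p'=(-\tfrac\pi2,\tfrac\pi2,-\tfrac\pi3,0)$. Hence all five loops lie on the proper transform $\tilde\Sigma_{AP}=\tilde h_R(\Sigma\times 1)$. This set is the unit-circle boundary of the tubular neighborhood $\tilde h_R(\Sigma\times D^2)$, whose core is $\tilde\Sigma_{AP,\delta}$ by Theorem \ref{R} and Lemma \ref{pushoff}. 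So these loops miss $\tilde\Sigma_{AP,\delta}$.

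\textbf{Part (2), step one.} For $\mu_{R,\delta}=\tilde h_R(\sigma\times S^1)$ the claim is immediate: it is the boundary of a normal disk of radius $\sqrt2\delta$ centered at $\nu(e)$, and so it misses the core.

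For $x,z,t,\mu_x,\mu_y$, first check that each misses the $\ep$-neighborhoods of $T_x$, $T_y$, $p$ and $p'$. All have fourth coordinate $0$ except $t$, and $t=(0,0,0,\theta)$ is far from all four. The loops $z$ and $\mu_x,\mu_y$ sit in the planes of Figure \ref{fig26fig}. The relevant computation is the distance to $p'$: its third coordinate is $-\tfrac\pi3$, while $x$ and $\mu_x$ have $(x,y)$ near the origin or the third coordinate $0$. Hence each loop lives in the common region of $R$ and $\TT^4$, where coordinates are valid.

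\textbf{Part (2), step two: $N_{1,\delta}$ and $N_{2,\delta}$.} By Lemma \ref{framing}, $N_{1,\delta}=N_1+(0,0,\delta,\delta)$, and every point there has fourth coordinate $\ge\delta>0$ (here $g\ge0$). So it misses every loop with $t$-coordinate $0$. It also misses $t$, whose third coordinate is $0$, since points of $N_{1,\delta}$ have third coordinate $f+\delta$, which vanishes only where $g>0$ away from the relevant $(x,y)$. Similarly $N_{2,\delta}=N_2+(-\delta,0,0,0)$ has $(x,y)$-coordinates within $2\ep$ of $(-\tfrac\pi2,\tfrac\pi2)$. Of our loops, only $\mu_z$ and part of $z$ come near there, and $z$ passes at distance about $\ep$ from $T_z$ in the $x$-direction on the wrong side. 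The sign $-\delta$ in the $x$-coordinate should be exactly what keeps $z$, which lies on the side where $x>-\tfrac\pi2$, away. This sign check is where I expect care is needed.

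\textbf{Part (2), step two: the remainder.} For the rest of $\tilde\Sigma_{AP}$ (outside $N_1\sqcup N_2\sqcup\Nbd{p\sqcup p'}$), I would show that each loop has distance $>\delta$ from it. Then the $\delta$-pushoff, which moves points by at most $2\delta$ with $\delta<\ep^2/8$, still misses the loop. The main obstacle is the loop $x$, which partly runs along $E_{AP}$ and meets it at $e$. Near $e$, $x$ lies in $N_1$, where the pushoff is the explicit translate already handled. Away from $N_1$, $E_{AP}$ has either $t$-coordinate $g>0$ bounded below by a quantity of order $\ep$, or $(x,y)$-distance at least $\ep$ from $x$. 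I would verify this case by case over the intervals where $f,g$ vanish or not, using Figures \ref{fig44fig} and \ref{fig45fig}.
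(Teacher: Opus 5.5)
Your outline follows the paper's route. You check loop by loop along the two-step strategy, excluding $N_{1,\delta}=N_1+(0,0,\delta,\delta)$ by its positive fourth coordinate and the rest of $E_{AP}$ by a distance estimate. However, you have misplaced the difficulty, and several individual claims are wrong, though none of them is fatal.

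\textbf{The loop $z$ is nowhere near $T_z$.} The loop $z(\theta)=(0,0,\theta,0)$ does not pass near $T_z$. Each of $x,z,t,\mu_x,\mu_y$ has first coordinate $0$ or second coordinate $0$. So each lies at distance at least $\tfrac\pi2$ from $T_z=\{(-\tfrac\pi2,\tfrac\pi2)\}\times\TT^2$; the paper opens every case with exactly this comparison. Hence $T_{z,0}$, $N_{2,\delta}$, the annulus $A$ and the blown-up region at $p'$ play no role in part (2). The sign check on $-\delta\tfrac{\partial}{\partial x}$ that you single out never arises. The loop that does approach $T_z$ is $\mu_z$, and it belongs to part (1). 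All the content of (2) is in $E_{AP}$.

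\textbf{Corrections in the $E_{AP}$ step.}
\begin{enumerate}
\item Your reason that $t$ misses $N_{1,\delta}$ is false as stated: $f+\delta$ vanishes at some $\xi\in(\tfrac\pi6,\tfrac\pi2)$, where $g=0$. The correct reason is that the only points of $E_{AP}$ over $(x,y)=(0,0)$ are $m(0,0)=e$ and $m(\pi,0)=(0,0,-\tfrac\pi3,\tfrac\ep4)$. Their translates have third coordinates $\delta$ and $-\tfrac\pi3+\delta$, neither zero. This is the paper's remark that $z$ and $t$ meet $E_{AP}$ only at $e$, the other point over the origin lying in $\{g>0\}$.
\item You say the pushoff moves points by up to $2\delta$, so a distance bound $>\delta$ does not suffice; you need $>2\delta$. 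The paper is equally loose here.
\item The true bound is in fact $\ep$, which settles the previous point since $2\delta<\ep$. The induced metric on $E_{AP}$ is $(4+f'^2+g'^2)\,d\xi^2+dy^2$. So a point $m(\xi_0,y_0)$ outside $N_1$ has $|y_0|\geq\ep$. It is therefore at distance $\geq\ep$ from $x,z,t,\mu_y$, all of which have second coordinate $0$. This is the paper's ``$x$ and $\xi$ have the same projection'' argument. The loop $\mu_x$, which has first coordinate $0$ and third coordinate $\geq 0$, is handled similarly.
\end{enumerate}

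\textbf{A small point in part (1).} Your justification for $\zeta$ treats only the path $\rho$. The circle $\tilde\zeta$ itself runs once around the $z$-direction and passes within $3\ep$ of $p'$. That is still outside $\Nbd{p'}$, so the claim holds, but it needs saying.
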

\noindent{\em Proof.} 
Since $\tilde\Sigma_{AP}\subset R\setminus \tilde\Sigma_{AP,\delta}$, (1) follows.  For (2), the loop $\mu_{R,\delta}$ lies on the boundary of a tubular neighborhood of  $\tilde \Sigma_{AP,\delta}$, hence misses $\tilde \Sigma_{AP,\delta}$.  

Recall that $T_z=\{(-\tfrac\pi 2,\tfrac\pi 2,z,t)\mid z,t\in S^1\}$.  The loop $x(\theta)=(\theta,0,0,0)$ satisfies
${\rm dist}(x,T_z)\ge\tfrac\pi 2$. Therefore, $x$ misses $T_{z,0,\delta}$ and $\Nbd{p\cup p'}$, and hence also the annulus $A_\delta$.  The 
 distance from $x$ to any point in $E_{AP}\setminus N_1$ is greater than $\ep>\delta$, since the projections of $x$ and $\xi$ to $\TT^2\times(0,0)$ have the same image  (see Figure \ref{fig44fig}) and therefore, $x$ misses $\big(E_{AP}\setminus(\Nbd{p\cup p'}\cup N_1)\big)_\delta.$ Finally, $x$ meets $N_1$ along an arc inside the embedded circle $\xi$, and $\xi$ lies in $R\setminus \tilde \Sigma_{AP,\delta}$.  Thus $x$  misses $\tilde \Sigma_{AP,\delta}$.

 Next, $z(\theta)=(0,0,\theta,0)$ satisfies    ${\rm dist}(z,T_z)\ge\tfrac\pi 2$ by comparing 2nd coordinates.  
It meets $E_{AP}$ in precisely the point $e$, since the only other point in $E_{AP}$ with first two coordinates zero lies in $\{g>0\}$ (see Figure \ref{fig44fig}) and hence has fourth coordinate positive. Since $e\in N_1$, $z$ misses $N_{1,\delta}$. Moreover, since $N_1$ contains an $\ep$ neighborhood of $e$, the distance from $z$ to $E_{AP}\setminus (N_1\cup \{g>0\})$ is greater than $\ep$.   Hence $z$ misses $\tilde \Sigma_{AP,\delta}$.

Next, the loop $t(\theta)=(0,0,0,\theta)$ satisfies  ${\rm dist}(t,T_z)\ge\tfrac\pi 2$ by comparing 2nd coordinates.  
It meets $E_{AP}$ in precisely the point $e$ and hence has distance greater than $\delta $ from $\tilde \Sigma_{AP}\setminus N_1$. It  meets $N_{1}$ precisely at $e$, which lies in $\tilde \Sigma_{AP}\subset R\setminus \tilde \Sigma_{AP,\delta}$. Therefore $t$ misses $\tilde \Sigma_{AP,\delta}$.

That the loop $\mu_x$ misses $\tilde \Sigma_{AP,\delta}$ follows simply because the line segment  from $e$ to $\partial \Nbd{T_x}$ misses $N_{1,\delta}$.   See Figure \ref{fig44fig}.
 The same argument works for $\mu_y$, once one notices that the line segment from  $e$ to $\partial \Nbd{T_y}$ misses $N_{1,\delta}$ and $\{g>0\}$.

\qed

\medskip

 \begin{lem} \label{Ball} Any loop in $R\setminus \tilde \Sigma_{AP,\delta}$ based at $e$ which lies within $5\ep$ of the line segment $\rho_1$
 is homotopic in $R\setminus \tilde \Sigma_{AP,\delta}$ to $\mu_{R,\delta}^k$ for some integer $k$.
 \end{lem}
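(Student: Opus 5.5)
The plan is to show that the closed $5\ep$-neighborhood $B$ of $\rho_1$ in the fundamental domain is a convex 4-ball in $(-\pi,\pi)^4$. Inside it, $\tilde\Sigma_{AP,\delta}$ should meet $B$ in a properly embedded 2-disk that is unknotted, so that $\pi_1(B\setminus\tilde\Sigma_{AP,\delta})\cong\ZZ$, generated by the meridian $\mu_{R,\delta}$. Any loop in the statement lies in $B\setminus\tilde\Sigma_{AP,\delta}$. Once the pair $(B,B\cap\tilde\Sigma_{AP,\delta})$ is identified with a standard $(D^4,D^2)$, the loop is homotopic inside $B\setminus \tilde\Sigma_{AP,\delta}$, hence in $R\setminus\tilde\Sigma_{AP,\delta}$, to $\mu_{R,\delta}^k$. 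Here $k$ is the linking (intersection) number with the disk.

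First I will check which parts of $\tilde\Sigma_{AP}$ enter $B$. The ball misses $T_x$, $T_y$ and the exceptional curve, because $p'$ has third coordinate $-\tfrac\pi3$ and $\rho_1$ lies in $\{z=t=0\}$. It also misses the region $\{g>0\}$, since that region lies over $\xi\in(\tfrac{7\pi}8,\tfrac{3\pi}2)$, i.e. near $x\approx\tfrac{7\pi}{4}$ to $3\pi\equiv\pi$, far from $\rho_1$. It misses the portion of $E_{AP}$ where $f\neq0$. What remains is:
\begin{itemize}
\item a flat piece of $E_{AP}$ lying in the plane $\{z=t=0\}$ (the part of $E_0$ near $\rho_1$, contained in $N_1$ near $e$);
\item the annulus $A$ inside $B_\ep(p)$;
\item the piece of $T_{z}$ near $p$, in the plane $\{x=-\tfrac\pi2,y=\tfrac\pi2\}$, out to distance about $5\ep$, which contains $\rho_3$ and part of $N_2$.
\end{itemize}
In other words, $B\cap\tilde\Sigma_{AP}$ is a standard local model of the resolution of two transverse coordinate planes, cut off by a ball.

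Next I will straighten this picture. Intersected with a slightly shrunken ball $B'$ centered near $p$, the resolved surface is a disk: the Hopf-annulus resolution of two transverse planes meeting at the origin, which is isotopic rel boundary to a smoothed cone over an unknot. The part of $B$ near $e$ only adds a collar in the flat $xy$-plane. So the pair is isotopic to the trivial disk in a ball, and the complement has fundamental group $\ZZ$. Because the framing $\nu$ equals $\tfrac{\partial}{\partial z}+\tfrac{\partial}{\partial t}$ on $N_1$ and $-\tfrac{\partial}{\partial x}$ on $N_2$, the $\delta$-pushoff is a small normal isotopy of this disk that stays inside $B$, since $\delta<\ep^2/8$. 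Its complement therefore also has $\pi_1\cong\ZZ$, generated by the meridian at $\nu(e)$, which is $\mu_{R,\delta}$ by (\ref{mudel}).

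The main obstacle is the boundary behavior. The pushed-off surface must meet $\partial B$ transversely in a single unknotted circle, and no other sheet of $\tilde\Sigma_{AP,\delta}$ may enter $B$; in particular the pushoff of the $T_{z}$ part in the $-x$ direction near the annulus region must be controlled. I would try first to replace $B$ by a convex neighborhood adapted to the coordinates, for instance a product of a neighborhood of $\rho_1$'s projection to the $xy$-plane with a $zt$-disk. For each coordinate piece I would verify by explicit distance estimates, as in Lemma \ref{9.2}, that the surface is a graph over a planar disk and exits transversally. If this direct check becomes messy, the fallback is to use the distance bounds to deform the loop into a small neighborhood of $e$ along $\rho_1$, away from the surface except near $\mu_{R,\delta}$, and finish with the local normal-disk model at $e$.
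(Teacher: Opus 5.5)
Your architecture matches the paper's: work in the convex ball $B$, show every loop in $B\setminus\tilde\Sigma_{AP,\delta}$ is a power of the meridian, then push the homotopy into $R\setminus\tilde\Sigma_{AP,\delta}$. However, the local model you propose is wrong, and the identification on which everything rests cannot be carried out.

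Inside $B$, the surface $\tilde\Sigma_{AP}$ is the resolution at $p$ of two transverse disks. One is the flat disk $E\cap B$, the $5\ep$-stadium about $\rho_1$ in $\{z=t=0\}$. The other is $T_z\cap B=\{(-\tfrac\pi2,\tfrac\pi2,z,t)\mid z^2+t^2\le 24\ep^2\}$. Resolving a double point lowers the Euler characteristic by $2$, so $B\cap\tilde\Sigma_{AP}$ has Euler characteristic $0$. It is an annulus, and its boundary in $\partial B\cong S^3$ is the two-component Hopf link $\partial(E\cap B)\sqcup\partial(T_z\cap B)$, because the disks meet once, positively. The same holds for $B\cap\tilde\Sigma_{AP,\delta}$ when $\delta$ is small.

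So $(B,B\cap\tilde\Sigma_{AP,\delta})$ is not a trivial $(D^4,D^2)$. The condition you list as your ``main obstacle'' (a single unknotted boundary circle) is not merely hard to verify; it is false. Switching to another convex neighborhood does not help, since any region containing the $5\ep$-neighborhood of $\rho_1$ contains $p$ and pieces of both sheets. Your sentence that the Hopf-annulus resolution is a cone on an unknot is also internally inconsistent: a Hopf annulus has two boundary circles, and the local smoothing of $\{zw=0\}$ is the annulus $\{zw=\eta\}$.

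What is missing is the fact the paper uses in place of your disk model. First, $A_{B,\delta}$ is isotopic rel boundary in $B$ to a Hopf annulus in $\partial B$ with its interior pushed radially into $B$. Second, for any connected compact surface $F\subset S^3$ pushed into $B^4$, the complement $B^4\setminus F^+$ has fundamental group generated by a single meridian. This follows from Gompf--Stipsicz, Proposition 6.2.1. The radial function on $F^+$ can be taken to have one minimum (for the Hopf annulus, one minimum and one saddle), so the complement is built with a single $1$-handle, and $\pi_1$ is generated by a meridian. A Mayer--Vietoris argument shows the group is infinite cyclic, although the lemma only needs that it is generated by $\mu_{R,\delta}$.

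With this substituted, your remaining remarks finish the proof: small $\delta$ keeps the pushoff transverse to $\partial B$, and the meridian at $\nu(e)$ is $\mu_{R,\delta}$. Your fallback of sliding loops toward $e$ does not avoid this computation. Loops in $B$ may link the $T_z$-sheet near $p$, and relating those to $\mu_{R,\delta}$ is exactly the content of the calculation.

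A smaller slip: $\{g>0\}$ is not far from $\rho_1$ in the $x$-coordinate. Its $x$-range is $(-\tfrac\pi4,\pi)$, which overlaps that of $B$. It misses $B$ because over $B$'s $x$-range it has $\xi\in(\tfrac{7\pi}8,\tfrac{7\pi}6]$, where $f=-\tfrac\pi3$.
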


 \begin{wrapfigure}
[9]{r}{0.4\textwidth}
  \centering
   \vskip-.3in
\includegraphics[width=3in]{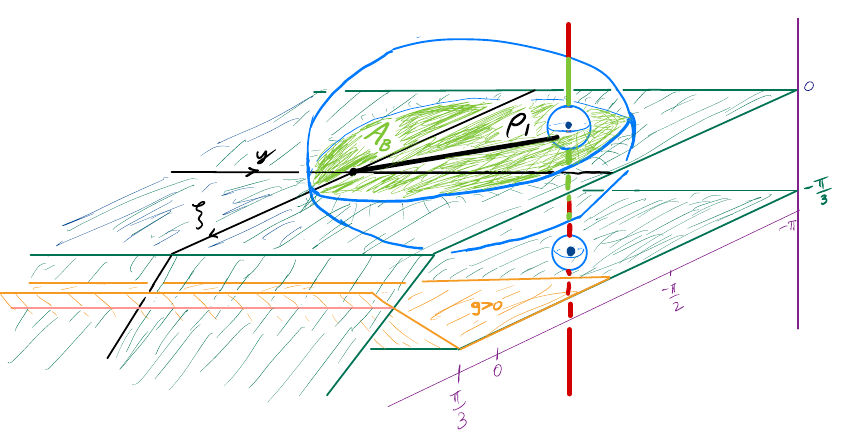}
 \vskip-.2in
\caption{\label{fig51fig}\color{red} }
 \end{wrapfigure} 
 \noindent{\em Proof.}  The (convex) $5\ep$ neighborhood $B=\Nbd{\rho_1}$ of the line segment $\rho_1$ in $\TT^4_\sur$ is diffeomorphic to a 4-ball, with $A_{B }:=\tilde \Sigma_{AP}\cap B$ the resolution of the intersecting disks
$\Sigma_{AP}\cap B$ and $T_z\cap B$. 
Figure \ref{fig51fig} shows the intersection of $B$ with $\TT^3\times\{0\}$, as well as that part of the annulus $A_B$ which lies outside $\Nbd{p}$.

Let $A_{B,\delta}\subset \tilde\Sigma_{AP,\delta}$ denote the $\delta$ push off of $A_B$.   The
annulus  $A_{B,\delta}$  is isotopic rel boundary in $B$ into an  annulus in $\partial B$ with boundary the Hopf link.  

Suppose $F^+$ is obtained by pushing  the interior of a connected compact surface $F\subset S^3$
into the interior of $B^4$. The fundamental group 
$\pi_1(B^4\setminus F^+)$ is infinite cyclic, generated by any meridian. This follows from
\cite[Proposition 6.2.1]{GS},  since $F^+$ admits a Morse function with one minimum value and maximum set $\partial F$. Hence
$\pi_1(B\setminus A_{B,\delta},e)$ is infinite cyclic, generated by  $\mu_{R,\delta}$. \qed

  \begin{lem} \label{dicyAF2}

There exists   integers $k_1,k_2$ so that the loops
 $ x,  y, z,  t ,  \zeta,\tau,\mu_x,\mu_y,\mu_{R,\delta}$  
in  $R\setminus \tilde\Sigma_{AP,\delta}$
satisfy:

\begin{tasks}[style=enumerate, item-format={\normalfont}, after-item-skip=-1mm](4)
 \task[(a)]~ $\mu_{R,\delta}=[y,\bar x]$,
 \task[(b)]~ $[y,\mu_{R,\delta}]=1$,
 
\task[(c)]~
  $\tau=\mu_{R,\delta}^{k_1}   t \mu_{R,\delta}^{-k_1},$ 
\task[(d)]~ $[\tau, y]=1$, 
\task[(e)]~$[z,\bar y]=\mu_x$, 

\task[(f)]~ $ 
\zeta=\mu_{R,\delta}^{k_2}   z \mu_{R,\delta}^{-k_2}$,

\task[(g)]~   $x=\mu_x$
\end{tasks}
 in  $\pi_1(R\setminus \tilde\Sigma_{AP,\delta},e).$
\end{lem}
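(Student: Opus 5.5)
The plan is to establish each relation by exhibiting an explicit map of a disk, annulus or torus into $R\setminus\tilde\Sigma_{AP,\delta}$ that realizes it, following the two-step strategy described before Lemma \ref{9.2}. First, show the object misses the $\ep$-neighborhood of $T_x\sqcup T_y\sqcup p\sqcup\mathfrak{E}$, so that we may work in coordinates in $\TT^4$. Second, check that it misses $N_{1,\delta}=N_1+(0,0,\delta,\delta)$ and $N_{2,\delta}=N_2+(-\delta,0,0,0)$ using Lemma \ref{framing}. Third, check that its distance to the rest of $\tilde\Sigma_{AP}$ exceeds $\delta$. Lemma \ref{Ball} absorbs all the local ambiguity near $\rho_1$ into powers of $\mu_{R,\delta}$.

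\emph{Relations (a) and (b).} The once-punctured torus $E_0\cap N_1$-type region spanned by $x$-direction and $y$ near $e$ is not quite on $E_{AP}$, so I would instead use the coordinate torus $\TT^2\times\{(0,0)\}$ (the $xy$-torus through $e$). It meets $E_{AP}$ in $N_1$ and in the $\{g>0\}$ part only via its projection, where the fourth coordinate separates them. Pushing the $xy$-torus minus a small disk around $p$ gives a punctured torus whose boundary is $[y,\bar x]$. I would then show this boundary, based along $\rho_1$, is a loop within $5\ep$ of $\rho_1$, so that Lemma \ref{Ball} identifies it with $\mu_{R,\delta}^{\pm1}$; the orientation convention of (\ref{mudel}) fixes the sign, giving (a). For (b), I would take the torus swept by $y$ translated in the $(z,t)=(s\delta,s\delta)$-direction around the normal circle. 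Concretely, the circle $y$ lies on $N_1$ and $\mu_{R,\delta}$ is the normal circle at $e$, so $y\times\mu_{R,\delta}$ is a torus in $\partial\Nbd{\tilde\Sigma_{AP,\delta}}$ via $\tilde h_R$. Since $\tilde h_R(\sigma\times S^1)=\mu_{R,\delta}$ and $\tilde h_R(a_2,1)=y$, the relation $[y,\mu_{R,\delta}]=1$ is immediate from $\pi_1(\Sigma\times S^1)$.

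\emph{Relations (c), (d), (f).} The loop $\tau=\rho\tilde\tau\bar\rho$ and $t$ differ by replacing $\rho=\rho_1\rho_2\rho_3$ by a straight-line homotopy inside the convex $5\ep$-ball around $\rho_1$, together with translating the $t$-circle from $(-\tfrac\pi2,\tfrac\pi2,-3\ep)$ to $e$. The translated circles form an annulus $\{q(s)\}\times S^1$, where $q$ runs along $\rho$ reversed. I would check that this annulus misses $\tilde\Sigma_{AP,\delta}$ away from the ball $B$: along $N_2$ the push-off is in the $-x$ direction, and the $t$-circles sit at $x=-\tfrac\pi2$ exactly, so only $\delta$-shift matters. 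The difference is then a loop inside $B$, hence $\mu_{R,\delta}^{k_1}$ by Lemma \ref{Ball}, giving (c). The same argument with $z$-circles gives (f). For (d), I would use (c) together with (b) and the relation $[t,y]=1$, realized by the torus $\{0\}\times S^1\times\{0\}\times S^1$. That torus meets $E_{AP}$ only along $y\subset N_1$, so it misses $N_{1,\delta}$ because the third coordinate is shifted by $\delta$. Then $[\tau,y]=\mu^{k_1}[t,y]\mu^{-k_1}=1$.

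\emph{Relations (e) and (g).} For (g), I would reuse the annulus $A_1$ from Lemma \ref{Pcomp}: it joins $x$ to $\mu_x$ with zero last coordinate, and it meets $E_{AP}$ only along the arc where $x$ lies on $\xi$ inside $N_1$, hence misses the push-off. The surgery relation then gives $x=\mu_x$. For (e), I would use the torus $\{0\}\times\TT^2\times\{0\}$ punctured at $C_x$, as in Lemma \ref{Pcomp}, and check it avoids the $f<0$ portion of $E_{AP}$ and $N_{1,\delta}$.

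The main obstacle I expect is the honest verification that each sweeping annulus or torus avoids the non-$N_i$ parts of $E_{AP}$, especially the bent region where $f\neq0$ and $g>0$, and the neighborhood of $p'$ and $\mathfrak{E}$. I would handle this by coordinate comparison with Figure \ref{fig44fig}, projecting to the third and fourth coordinates.
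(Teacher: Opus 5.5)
Your arguments for (b), (d), (e) and (g) are the paper's: (b) comes from $\tilde h_R$ on $\Sigma\times S^1$, (d) from the torus $\{(0,y,0,t)\}$ together with (b) and (c), (e) from the torus $\{(0,y,z,0)\}$ meeting $T_x$ once, and (g) from $A_1$ plus the surgery relation. The gaps are in (a), (c) and (f). In each of these the surface you sweep runs through parts of $\tilde\Sigma_{AP}$ outside $N_1\sqcup N_2$, where Lemma \ref{framing} gives no control over the push-off.

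\textbf{Relation (a).} Your claim that $\TT^2\times\{(0,0)\}$ meets $E_{AP}$ only in $N_1$ is false. Since $f=g=0$ on $[0,\tfrac\pi6]\cup[\tfrac{3\pi}2,2\pi]$, $E_{AP}$ contains the whole annulus $\{(x,y,0,0)\mid -\pi\le x\le\tfrac\pi3\}$. The resolution annulus $A$ is also attached to your torus along the radius-$\ep$ circle about $p$. So the distance from your torus to $\tilde\Sigma_{AP}\setminus N_1$ is zero, and nothing stops the push-off from crossing $\{z=t=0\}$. For instance, near $\xi=\tfrac\pi6$ where $E_{AP}$ bends down from $z=0$, take $\nu=\tfrac{\partial}{\partial z}$ there, which Lemma \ref{framing} permits. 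Moreover, Lemma \ref{Ball} only yields $\mu_{R,\delta}^k$. Getting $k=1$ needs the loop to be an oriented meridian, i.e.\ a single transverse intersection, which you never establish. (If by ``pushing'' you meant translating the torus off $\{z=0\}$, its boundary is no longer $[y,\bar x]$ at $e$, and you need the collar below.)

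The paper's fix is to lift the torus off $E_{AP}\subset\{z\le0\}$. It uses $\TT^2\times\{(\ep,0)\}$, which meets $\tilde\Sigma_{AP,\delta}$ once transversely near $T_z$. Its 2-cell is joined to $[y,\bar x]$ by the collar $c\times[0,\ep]$, $c=[y,\bar x]$. The collar lies in $\{t=0\}$ and meets $\tilde\Sigma_{AP}$ only along $\xi\cup y\subset N_1$. Hence it misses $N_{1,\delta}\subset\{t\ge\delta\}$ and stays $\ge\ep$ from $\tilde\Sigma_{AP}\setminus N_1$. The resulting $\mu'_\delta$ (the $z$-shift of $\mu_z$ by $\ep$) is an honest meridian within $5\ep$ of $\rho_1$. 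Since $\pi_1(B\setminus A_{B,\delta})$ is infinite cyclic generated by any meridian, $\mu'_\delta=\mu_{R,\delta}$ exactly.

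\textbf{Relations (c) and (f).} Sweeping the $t$-circle (resp.\ $z$-circle) along $\rho$ reversed fails for the same reason. Because $\rho\subset\tilde\Sigma_{AP}$, every circle of your annulus meets $\tilde\Sigma_{AP}$, mostly outside $N_1\sqcup N_2$:
\begin{itemize}
\item over $\rho_1$ away from $e$ (if $\nu=\tfrac{\partial}{\partial t}$ there, the $t$-circle through $q$ contains $q+\delta\nu(q)$);
\item over $\rho_2$, inside the resolution region;
\item over $\rho_3$, where the circles lie on $T_z$ itself, partly outside $N_2$ and within $\ep$ of $p$.
\end{itemize}
The $z$-circles are worse: over $\rho_1$ they cross the sheet $\{z=-\tfrac\pi3\}$ of $E_{AP}$, and over $\rho_3$ they come within $\ep$ of $p'$, i.e.\ into the blown-up region. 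Checking only ``away from $B$'' does not help. Lemma \ref{Ball} absorbs only the discrepancy between basing paths; the annulus itself, whose circles have length $2\pi$, must miss $\tilde\Sigma_{AP,\delta}$ everywhere.

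The paper chooses the track instead. $A_{\tau,1}$ (resp.\ $A_{\zeta,1}$) moves $(x,y)$ from $(-\tfrac\pi2,\tfrac\pi2)$ to $(0,0)$ holding $z=-3\ep$ (resp.\ $t=-3\ep$). Then $A_{\tau,2}$ (resp.\ $A_{\zeta,2}$) slides that coordinate to $0$ at $(x,y)=(0,0)$. These annuli stay $\ge\ep$ from $\tilde\Sigma_{AP}\setminus(N_1\sqcup N_2)$, in particular from $p$ and $p'$. They meet $\tilde\Sigma_{AP}$ only at $e\in N_1$ and along $\tilde\tau$ (resp.\ $\tilde\zeta$). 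They reach $\tilde\tau$ (resp.\ $\tilde\zeta$) from $x\ge-\tfrac\pi2$, while $N_{2,\delta}=N_2+(-\delta,0,0,0)$ has $x=-\tfrac\pi2-\delta$, so they miss it. Only with such annuli does your Lemma \ref{Ball} bookkeeping produce $k_1$ and $k_2$.
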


 \noindent{\it Proof.} 

  \begin{wrapfigure}
[10]{r}{0.35\textwidth}
  \centering
   \vskip-.7in
\includegraphics[width=2.3in]{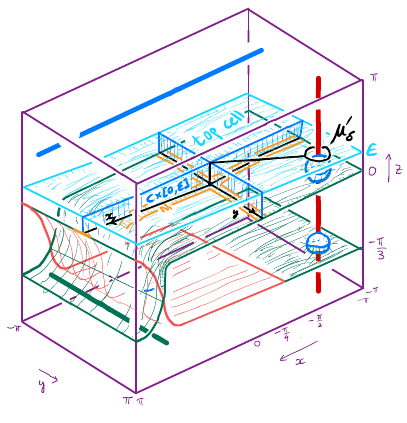}
 \vskip-.2in
\caption{\label{fig47fig}\color{red}}
 \end{wrapfigure} 
(a). Construct a map   $\alpha\colon D^2\to \TT^3\times \{0\}$ as follows.  The torus $\TT^2\times (\ep,0)$ lies in $R$ and intersects $\tilde \Sigma_{AP,\delta}$ in exactly one point,  transversely. Its top cell has boundary mapping to (the non-embedding) $c\times\{(\ep,0)\}$, with $c$ the commutator map $[y,\bar x]$. Attach the collar $ c\times [0,\ep]$ to the top cell to construct the  map $\alpha$.  

 The collar  $c\times [0,\ep]$ lies in $\{t=0\}$ and hence misses 
$N_{1,\delta}$.
It intersects $N_1$ in a subset of $\xi\cup y$ and hence   the distance between $c\times [0,\ep]$ and $\tilde \Sigma_{AP}\setminus  N_1$ is at least $\ep\ge 8\delta$. Hence $c\times [0,\ep]$ misses $\tilde \Sigma_{AP,\delta}\setminus N_{1,\delta}$. Thus $c\times [0,\ep]$ misses $\tilde \Sigma_{AP,\delta}$.
 See Figure \ref{fig47fig}, where $c\times[0,\ep]$ and    the top cell is drawn.  

\color{black}
Consider the based loop $\mu_\delta'$ indicated in Figure \ref{fig47fig}, obtained from $\mu_z$ (Figure \ref{fig44fig}) by shifting its $z$ coordinate up by $\ep$; with the  endpoint $(0,0,\ep,0)$ connected to $e$ by the line segment joining them.  The loop $\mu_\delta'$ is the image under $\alpha$ of a loop in $D^2$, proving that $\mu_\delta'=[y,\bar x]$ in $\pi_1(R\setminus \tilde \Sigma_{AP,\delta})$.  Note that $\mu_\delta'$ lies in the $5\ep$ neighborhood of $\rho_1$, and is an oriented meridian of $\tilde \Sigma_{AP,\delta}$. Lemma \ref{Ball} implies that $\mu_\delta'=\mu_{R,\delta}$.

 \medskip

(b).  Since $y=\tilde h_R(a_2,1)$ and  $\mu_{R,\delta}=\tilde h_R(\sigma\times S^1)$, it follows that $[y,\mu_{R,\delta}]=1$.

\medskip

(c). Recall that $\tilde\tau(\theta)=(-\tfrac\pi 2, \tfrac\pi 2,-3\ep,-3\ep +\theta),~\theta\in S^1.$  Assume for a moment that the annuli
$$A_{\tau,1}=\{ (-u\tfrac\pi 2, u\tfrac\pi 2,-3\ep,-3\ep u +\theta)\mid u\in [0,1],\theta\in S^1\}\text{ and } A_{\tau,2}=\{ (0, 0 ,-3\ep u,\theta)\mid u\in [0,1],\theta\in S^1\}$$
miss $\tilde \Sigma_{AP,\delta}$. These provide a free homotopy between $\tilde \tau$ and $t$ in $R\setminus \tilde \Sigma_{AP,\delta}$.  The paths $$(-u\tfrac\pi 2, u\tfrac\pi 2,-3\ep,-3\ep u )\text{ and } (0, 0 ,-3\ep u,0),~u\in [0,1]$$
lie in  the $5\ep$ neighborhood of $\rho_1$ and lie on $A_{\tau,1}$ and $A_{\tau,2}$.   These compose with $\rho_1\rho_2\rho_3$ to form a loop based at $e$, which, by Lemma \ref{Ball}, equals $\mu_{R,\delta}^{k_1}$ in $\pi_1(R\setminus \tilde \Sigma_{AP,\delta},e)$ for some integer $k_1$.  It follows that  $\tau=\mu_{R,\delta}^{k_1} t\mu_{R,\delta}^{-k_1}$.

We return to show these annuli miss $\tilde \Sigma_{AP,\delta}$.  The annulus $A_{\tau,1}$ misses  $T_{z,0,\delta}$ since $N_2$ is an $\ep$ neighborhood of $\tilde \tau$, and $N_{2,\delta}=N_2+(-\delta,0,0,0)$.  It misses the $\ep$ neighborhoods of $p$ and $p'$ by comparing third coordinates.  It misses an $\ep$ neighborhood of $E_{AP}$ since its third coordinate is negative.  Hence $A_{\tau,1}$ misses $\tilde \Sigma_{AP,\delta}$.  The annulus $A_{\tau,2}$ clearly misses an $\ep$ neighborhood of $T_z\cup p\cup p'$, and since $N_1$ contains an $\ep$ neighborhood of $e$, it also misses an $\ep$ neighborhood of $\tilde\Sigma_{AP}\setminus N_1$ and hence misses $\tilde\Sigma_{AP,\delta}\setminus N_{1,\delta}$. Finally, $A_{\tau,2}$  misses $N_{1,\delta}$ since its third coordinate is non-positive, 
and using Lemma \ref{framing}.

\medskip 
 
(d).  The distance from the torus 
$\{(0, y, 0, t)\mid y,t\in S^1\}$   to  $T_x , T_y$  and $T_z$ is greater than $\ep$, by comparing third  and first coordinates. Its distance to $p,p'$  is greater than $\ep$ by comparing first coordinates.  It intersects $E_{AP}$ in    the curve $y$, whose distance to $E_{AP}\setminus N_1$ is greater than $\ep$. Since every point in $N_{1,\delta}$ has non-zero third coordinate by Lemma \ref{framing}, this torus misses $\tilde\Sigma_{AP,\delta}$. Hence $[y,t]=1$  in  $\pi_1(R\setminus \tilde\Sigma_{AP,\delta},e).$
By (b) and (c), 
 $[\tau, y]=\mu_\delta^{k_1}[t, y]\mu_\delta^{-k_1}=1.$
 
\medskip

(e).
The distance from the torus 
$\{(0, y, z,0)\mid y,z\in S^1\}$   to  $T_y$  and $T_z$ is greater than $\ep$, by comparing first coordinates. It misses $N_{1,\delta}$ and has distance at least $\tfrac{\ep}{4}$ from $\{g>0\}\subset E_{AP}$, and hence misses $\tilde\Sigma_{AP,\delta}$. It intersects $T_x$ transversely in the point $(0,-\tfrac\pi 2, \tfrac\pi 2,0)$, proving that $[z,\bar y]=\mu_x$ in $\pi_1(R\setminus \tilde\Sigma_{AP,\delta},e).$

\medskip

 (f). Recall that $\tilde \zeta(\theta)=(-\tfrac\pi 2, \tfrac\pi 2, -3\ep +\theta, -3\ep), ~\theta\in S^1$. Assume for the moment that the annuli
$$
A_{\zeta,1}=\{(-u\tfrac\pi 2, u\tfrac\pi 2, -3\ep u +\theta, -3\ep), ~\theta\in S^1, u\in [0,1]\}\text{ and }
A_{\zeta,2}=\{(0,0,\theta, -3\ep u), ~\theta\in S^1, u\in [0,1]\}
$$
miss $\tilde \Sigma_{AP,\delta}$.

These provide a free homotopy between $\tilde \zeta$ and $z$ in $R\setminus \tilde \Sigma_{AP,\delta}$.  The paths $$(-u\tfrac\pi 2, u\tfrac\pi 2,-3\ep u,-3\ep  )\text{ and } (0, 0 ,0, -3\ep u),~u\in [0,1]$$
lie in  the $5\ep$ neighborhood of $\rho_1$ and lie on $A_{\zeta,1}$ and $A_{\zeta,2}$.   These compose with $\rho_1\rho_2\rho_3$ to form a loop based at $e$, which, by Lemma \ref{Ball}, equals $\mu_{R,\delta}^{k_2}$ in $\pi_1(R\setminus \tilde \Sigma_{AP,\delta},e)$ for some integer $k_2$.  It follows that  $\zeta=\mu_{R,\delta}^{k_2} z\mu_{R,\delta}^{-k_2}$  in $\pi_1(R\setminus \tilde\Sigma_{AP,\delta},e).$

It remains to show  that $A_{\zeta,1}$ and $A_{\zeta,2}$ miss $\tilde\Sigma_{AP,\delta}$.  For $A_{\zeta,1}$, note that $A_{\zeta,1}$ intersects $\tilde\Sigma_{AP}$ precisely in the loop $\tilde\zeta=\{(-\tfrac\pi 2, \tfrac\pi 2, -3\ep  +\theta, -3\ep), ~\theta\in S^1, u\in [0,1]\}
$, which lies in $N_2\subset T_{z,0}$. The distance  from any point in $A_{\zeta,1}$ to $\tilde\Sigma_{AP}\setminus  N_2 $ is greater than $\ep$, and since $ A_{\zeta,1}$ misses $N_{2,\delta}$ by Lemma \ref{framing}, $ A_{\zeta,1}$ misses 
$\tilde\Sigma_{AP,\delta}$. For  
$A_{\zeta,2}$, note that $A_{\zeta,2}$ intersects $\tilde\Sigma_{AP}$ precisely at the point $e$. Hence  the distance from $A_{\zeta,2}$ to $\tilde \Sigma_{AP}\setminus N_1$ is at least $\ep$. Since $A_{\zeta,2}$ misses $N_{1,\delta}$ by Lemma \ref{framing}, $ A_{\zeta,2}$ misses 
$\tilde\Sigma_{AP,\delta}$.
 
\medskip
  \begin{wrapfigure}
[16]{r}{0.35\textwidth}
  \centering
   \vskip-.2in
\includegraphics[width=2.4in]{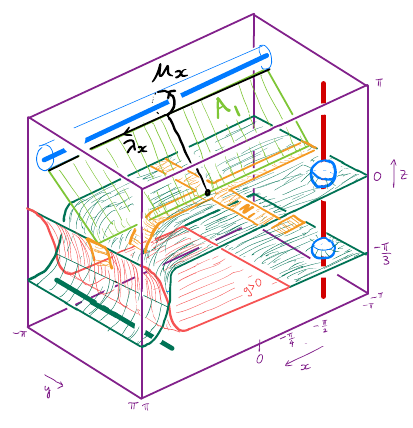}
\vskip-.2in
\caption{\label{fig59fig}}
 \end{wrapfigure} 
 (g).  Define the loop $$\lambda_x:=\{(x, -\tfrac\pi 2+\tfrac{\sqrt{\ep}}{2}, \tfrac\pi 2-\tfrac{\sqrt{\ep}}{2},0)\mid x\in S^1\},\hskip2in$$
and connect it to the base point by the line segment $\rho_x$ from $e$ to $(0,-\tfrac\pi 2+\tfrac{\sqrt{\ep}}{2}, \tfrac\pi 2-\tfrac{\sqrt{\ep}}{2},0)$. This is the same line segment used to base the meridian $\mu_x$, and therefore,  
the surgery relation  for $T_x$  in $R\setminus \tilde\Sigma_{AP,\delta}$ is $\mu_x=\lambda_x$.

The annulus 
 $$A_1:=\{(x, -u, u,0)\mid x\in S^1, 0\leq u\leq \tfrac\pi 2-\tfrac{\sqrt{\ep}}{2}\}\subset \TT^3\times\{0\}\hskip2.5in$$  
 contains $\lambda_x$,
 has distance at least $\ep$ from $T_x, T_y$, and $T_z$, 
has distance greater than $\tfrac{\ep}{\sqrt 2}>\delta$ from $E_{AP}\setminus N_1$, and
 misses $N_{1,\delta}$ since  the last coordinate of $A_1$ is zero.  
Hence $A_1$  misses $\tilde\Sigma_{AP,\delta}$,  and  provides a  based homotopy in $R\setminus \tilde\Sigma_{AP,\delta}$ from $x$ to $\lambda_x$, from which it follows by the surgery relation that $x=\lambda_x=\mu_x$.  See Figure \ref{fig59fig}.
\qed

 \color{black}

 \subsection{The surface sum $Z_{AP}=R\#_\Sigma P$}
 
  We use the same gluing $\Psi$ (Equation (\ref{glue2})) as used in Definition \ref{defZ} to glue the building blocks $(R,\tilde h_R)$ and $(P,\tilde h_P)$  together.
\begin{df}\label{defZap} Define the {\em surface sum of $R$ and $P$ along $\tilde\Sigma_{AP}$ and $\Sigma_P$} to be
  
 \begin{equation}\label{ZAP}Z_{AP}=\big( R\setminus\tilde h_{R}(\Sigma\times D^2)\big)\cup_{\psi}\big(P\setminus\tilde h_{P}(\Sigma\times D^2)\big)
\end{equation}where $$\psi:\partial \Nbd{\Sigma_{AP,\delta}}\xrightarrow{\tilde h_R^{-1}}\Sigma\times S^1\xrightarrow{\Psi\times {\rm -Id}}
\Sigma\times S^1
\xrightarrow{\tilde h_P}\partial \Nbd{\Sigma_{P,\delta}}.$$

\end{df}

 Theorems \ref{R},  \ref{thmP}, and SVK imply that the relations 
 $${\rm Sum}_{AP}:= \{\zeta=y'\bar x'\bar y', \tau=\bar y', y=y''\bar x''\bar y'', \bar\xi=\bar y'',~\mu_{R,\delta}=\mu_{P,\delta}^{-1}\}$$ hold  in $\pi_1(Z_{AP})$.
 
 \color{black}
\begin{thm}\label{Zap}
The closed 4-manifold $Z_{AP}$ is  simply connected. \end{thm}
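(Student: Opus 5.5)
The plan is to run the argument of Theorem \ref{thm5.1} again, with one new first step: kill the meridian $\mu_{R,\delta}$ before anything else. Write $Z_{AP}=R^\#\cup P^\#$, where $R^\#=R\setminus \tilde h_R(\Sigma\times D^2)$ and $P^\#=P\setminus \tilde h_P(\Sigma\times D^2)$, glued along $\Sigma\times S^1$ via $\psi$.

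\emph{Generators.} By the SVK theorem, $\pi_1(Z_{AP},e)$ is generated by the images of $\pi_1(R^\#,e)$ and $\pi_1(P^\#,e_z')$. Each inclusion $R^\#\subset R$ and $P^\#\subset P$ is surjective on $\pi_1$. Its kernel is normally generated by the meridian $\mu_{R,\delta}$, respectively $\mu_{P,\delta}$, since filling in $\Sigma\times D^2$ adds the single 2-handle along $\sigma\times D^2$ plus 3- and 4-handles. Consequently $\pi_1(R^\#)$ is generated by lifts of $x,y,z,t$ (the loops of Lemma \ref{9.2}) together with $\mu_{R,\delta}$, and similarly on the $P$ side. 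The gluing gives $\mu_{R,\delta}=\mu_{P,\delta}^{-1}$. Hence, once $\mu_{R,\delta}=1$ is known in $\pi_1(Z_{AP})$, both images factor through $\pi_1(R)=\langle x,y,z,t\mid [y,\bar x],{\rm Rel}\rangle$ and $\pi_1(P)$ (Theorem \ref{thmP}). Then $\pi_1(Z_{AP})$ is a quotient of the amalgam of these two groups subject to ${\rm Sum}_{AP}$.

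\emph{Key step: $\mu_{R,\delta}=1$.} Apply Theorem \ref{trick} with $W=R^\#$ and $\tilde h_W=\tilde h_R|$. The hypothesis $[\tilde h_W(b_1,1),\tilde h_W(a_2,1)]=[\tau,y]=1$ is Lemma \ref{dicyAF2}(d). The conclusion gives $[\zeta,y]=1$ in $\pi_1(Z_{AP})$. Then I combine the parts of Lemma \ref{dicyAF2} in order:
\begin{itemize}
\item by (f) and (b), $\zeta=\mu_{R,\delta}^{k_2}z\mu_{R,\delta}^{-k_2}$ and $\mu_{R,\delta}$ commutes with $y$, so $[z,y]=1$, hence $[z,\bar y]=1$;
\item by (e), $\mu_x=1$;
\item by (g), $x=1$;
\item by (a), $\mu_{R,\delta}=[y,\bar x]=1$.
\end{itemize}
These relations all hold in $\pi_1(R\setminus\tilde\Sigma_{AP,\delta})$, so they hold in $\pi_1(Z_{AP})$.

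\emph{Cancellation cascade.} With $\mu_{R,\delta}=1$ we have $\tau=t$ and $\zeta=z$ by (c) and (f). Also $\xi=x^2$, via the factorization through $\pi_1(R)$ (Theorem \ref{R}). So ${\rm Sum}_{AP}$ becomes
\[
z=y'\bar x'\bar y',\qquad t=\bar y',\qquad y=y''\bar x''\bar y'',\qquad x^2=y''.
\]
Now I chase generators exactly as in the proof of Theorem \ref{thm5.1}:
\begin{itemize}
\item $x=1$ gives $y''=1$, and $t=[x,\bar z]=1$;
\item $x''=[z'',\bar y'']=1$, then $t''=[x'',\bar z'']=1$;
\item ${\rm Gl}$ gives $z'=t''=1$, and then $y=1$;
\item the surgery relations for $P'$ give $x'=t'=1$, so $z''=\bar t'=1$;
\item finally $y'=\bar t=1$ and $z=y'\bar x'\bar y'=1$.
\end{itemize}
Since all generators from both sides (and $\mu_{R,\delta}$) are trivial, $\pi_1(Z_{AP})=1$.

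\emph{Main obstacle.} The delicate point is checking that Theorem \ref{trick} really applies with $W=R^\#$. Its proof uses only the $P$-side relations of Lemma \ref{Pcomp} in $\pi_1(P\setminus\Sigma_{P,\delta})$ and the single $W$-side commutation (d). Every relation used before $\mu_{R,\delta}$ is killed must genuinely hold in the complement $R\setminus\tilde\Sigma_{AP,\delta}$, not merely in $\pi_1(R)$; this is exactly what Lemma \ref{dicyAF2} provides. The order matters: first use the trick to obtain $[\zeta,y]=1$, then pass through (f),(b),(e),(g),(a) to reach $\mu_{R,\delta}=1$, and only after that use relations valid only in $\pi_1(R)$, such as $\xi=x^2$ and ${\rm Rel}$.
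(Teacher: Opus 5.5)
Your proposal is correct and is essentially the paper's proof: Theorem \ref{trick} applied to $W=R^\#$ via Lemma \ref{dicyAF2}(d) gives $[\zeta,y]=1$, then parts (f), (b), (e), (g), (a) force $\mu_x=x=\mu_{R,\delta}=1$, after which the SVK diagram factors through $\pi_1(R)$ and $\pi_1(P)$ and a cancellation cascade kills all twelve generators (you order the cascade as in Theorem \ref{thm5.1}, using $y''=\xi=x^2$ first, whereas the paper starts from $t=1$ and uses $\xi=x^2$ last). The only slip is your aside that $\pi_1(R^\#)$ is generated by lifts of $x,y,z,t$ together with $\mu_{R,\delta}$, which does not follow merely from the kernel being the normal closure of $\mu_{R,\delta}$; you never need it, though, because the factorization through $\pi_1(R)$ once $\mu_{R,\delta}=1$ uses only that normal generation.
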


\begin{proof}
Set $W=R\setminus\tilde h_{R}(\Sigma\times D^2)$.
The loops  $y=\tilde h_R(a_2,1)$  and $\tau=\tilde h_R(b_1,1)$ lie  in the boundary of $W$, and   
Lemma \ref{dicyAF2} (d) asserts that $[ \tau,y]=1$ in $\pi_1(W,e)$. 
Theorem \ref{trick} applies to show  that    
$[\zeta, y]=[\tilde h_R(a_1,1),\tilde h_R(a_2,1)]=1= [y'\bar x'\bar y', y'' \bar x''\bar y'']$ in $\pi_1(Z_{AP},e)$.  

Using Lemma \ref{dicyAF2} (b), (e), and (f), 
$1=\mu_{R,\delta}^{-k_2}[\zeta,\bar y]\mu_{R,\delta}^{k_2}=
[z,\bar y]=\mu_x$. 
    Lemma \ref{dicyAF2}   (g) implies that
$ x=1 $  in $\pi_1(Z_{AP},e)$.   Lemma \ref{dicyAF2}   (a) now shows that 
$\mu_{R,\delta}=1$ in $\pi_1(Z_{AP},e)$, and Sum$_{AP}$ gives $\mu_{P,\delta}=1$.

By the SVK theorem, and the handle attaching argument   (see Appendix \ref{CP})
$$ 
\pi_1(R\setminus \tilde \Sigma_{AP,\delta},e)/\langle\langle \mu_{R,\delta}\rangle\rangle=\pi_1(R,e)\text{ and }\pi_1(P\setminus \tilde \Sigma_{P,\delta},e)/\langle\langle \mu_{P,\delta}\rangle\rangle=\pi_1(P,e).
$$   
This implies that  the SVK diagram factors:
\[
\begin{tikzcd}
&\pi_1(\Sigma\times S^1)\arrow[dl]\arrow[dr]\\
\pi_1(R\setminus \tilde\Sigma_{AP,\delta})\arrow[d,"i_R" ]&&\pi_1(P\setminus \Sigma_{P,\delta})\arrow[d,"i_P"]\\
\pi_1(R)\arrow[dr]&&\pi_1(P)\arrow[dl]\\
&\pi_1(Z_{AP})&
\end{tikzcd}\]
with   $i_R$ and $i_P$ surjections whose kernels  are normally generated by $\mu_{R,\delta}$ and $\mu_{P,\delta}$.   
It follows that  $\pi_1(Z_{AP},e)$ is generated by the twelve loops
$$
  x,  y,  z,   t,x',y',z',t',x'',y'',z'',t''$$
and these satisfy, in addition to $x=1$,  the relations Rel, Rel$'$, Rel$''$, Gl, and Sum$_{AP}$.  Moreover, this implies (Proposition \ref{thm1})
that  the loop $\mu_y$ satisfies
$ \mu_y=[x,\bar z ]
$
and therefore, $1=\mu_y=t$ by Sur.

\medskip

We finish the proof by showing that the remaining ten generators are also trivial in $\pi_1(Z_{AP})$.
 Since $t=1$,  Lemma \ref{dicyAF2} (c)  shows $\tau=1$, which by Sum$_{AP}$ implies $y'=1$. Then Sur$'$ shows $x'=1$, and then $t'=1$.
Next, Gl implies that $z''=1$, which, using Sur$''$, shows $x''=1=t''$. Since $t''=1$, Gl implies that $z'=1$.
Sum$_{AP}$   shows that  $y=y'' \bar x'' \bar y''=1$. 
 Lemma \ref{dicyAF2} (f), $x'=1$,   and Sum$_{AP}$   show that
$ 1=y'\bar x'\bar y'=\zeta=\mu_\delta^{k_2} z\mu_\delta^{-k_2}$, so that $\zeta=1=z$.

Finally, $\xi=x^2$ in $\pi_1(R)$, so that, using Sum$_{AP}$,
$y''=\bar \xi=1$ in $\pi_1(Z_{AP})$.  All  generators are killed by the relations, and hence $Z_{AP}$ is simply connected.
\end{proof}

The Euler characteristic of $Z_{AP}$ equals 5, and its signature equals $-1$, its second Betti number is three and hence the intersection form is odd, and since $Z_{AP}$ is smooth it has vanishing Kirby-Siebenmann invariant.  Hence by Freedman's theorem $Z_{AP}$ is homeomorphic to $\CP ^2\# 2\overline{\CP }^2$.

\appendix

\section{Exotica} \label{exotic}
We briefly explain, using the language of symplectic topology, the relationship between Theorems \ref{thm5.1}  and \ref{Zap} and the existence of 4-manifolds homeomorphic but not diffeomorphic to $\CP ^2\# 3\overline{\CP }^2$ and $\CP ^2\# 2\overline{\CP }^2$. 

The 4-manifold $\TT^4_\sur$ admits a symplectic structure since it is obtained by Luttinger surgeries on $\TT^4$ \cite{luttinger}. Blowing up can be done symplectically, so that
$S$ and $R$ are symplectic. The surfaces $ \Sigma_{P},\tilde \Sigma_{S},$ and $\tilde \Sigma_{AP}$ are symplectic, as are the nearby surfaces including $ \Sigma_{P,\delta},\tilde \Sigma_{S,\delta},$ and $\tilde \Sigma_{AP,\delta}$ for small enough $\delta$.  Hence  the fiber sums $Z$, $Z_{AP}$  are symplectic; The deformation class of the resulting symplectic structure on $Z$ and $Z_{AP}$ is independent of all choices made in the construction. \cite{Gompf}.
A symplectic 4-manifold is  symplectically minimal if and only if it contains no embedded symplectic  2-spheres of square $-1$.  
\begin{lem} The closed symplectic 4-manifolds $Z$ and $Z_{AP}$ are symplectically minimal.
\end{lem}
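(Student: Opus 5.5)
The plan is to invoke Usher's theorem on minimality of symplectic fiber sums: if $X_1,X_2$ are closed symplectic 4-manifolds containing symplectic surfaces $F_1,F_2$ of genus $g\ge1$ and square zero, then the fiber sum $X_1\#_{F_1=F_2}X_2$ is symplectically minimal unless either (i) $X_i\setminus F_i$ contains a symplectic sphere of square $-1$ disjoint from $F_i$ (i.e. $X_i$ is non-minimal for a reason not involving $F_i$), or (ii) one of the summands is a blow-up of an $S^2$-bundle over a surface with $F_i$ a section, or the case $g=1$ with a rational elliptic surface. Since the genus here is $2$, the elliptic exception is irrelevant, and the argument reduces to checking the summands.

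First, $P$ is minimal: it is itself a symplectic fiber sum of two copies of $\TT^4_\sur$ along the tori $T_z',T_z''$. I will argue that $\TT^4_\sur$ is minimal, either via Usher again or, more directly, because $\TT^4_\sur$ is aspherical-like. Concretely, a symplectic $-1$ sphere would give a nontrivial class in $\pi_2$, and the Luttinger surgeries can be analyzed as torus bundle pieces with $\pi_2=0$. It is simpler to note that $\TT^4_\sur$ has $b^+=3$ and $c_1^2=0$, and that Taubes' SW result shows its only basic class is $0$ (it is symplectic with Kodaira dimension $0$ since it is a torus-surgered $\TT^4$ with $K$ torsion). A non-minimal symplectic manifold with $b^+>1$ has basic classes $K\pm 2E$ both nonzero, so $\TT^4_\sur$ is minimal. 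Usher with genus $1$ summands, neither rational elliptic nor ruled (they have $b_1>0$ and $\chi=0$ but are not $S^2$-bundles since $\pi_2$ considerations and $b^+=3$), then gives that $P$ is minimal.

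Next, for $S$ and $R$: their exceptional spheres $\mathfrak E_1,\mathfrak E_2$ (resp. $\mathfrak E$) all meet $\tilde\Sigma_S$ (resp. $\tilde\Sigma_{AP}$) positively, in one point each, resp. two points. So there is no exceptional sphere disjoint from the surface. I must rule out other $-1$ spheres in $S\setminus\tilde\Sigma_S$: every exceptional class in a blow-up of the minimal non-ruled manifold $\TT^4_\sur$ is $\pm E_i$ (the standard uniqueness of exceptional classes for non-rational, non-ruled minimal models, via Li–Liu/Taubes), and each of these pairs nontrivially with $[\tilde\Sigma]$, so none can be disjoint. Finally, $S$ and $R$ are not blow-ups of ruled surfaces, since their minimal model $\TT^4_\sur$ has $b^+=3$. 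Usher's theorem then yields that $Z$ and $Z_{AP}$ are minimal.

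The main obstacle is making the exclusions in Usher's theorem fully rigorous, in particular the claim that the only exceptional classes in $\TT^4_\sur\#k\bCP^2$ are $\pm e_i$ and that $\TT^4_\sur$ is not ruled. I would cite that a minimal symplectic manifold with $b^+>1$ has a unique minimal model and that its exceptional classes are exactly the $e_i$ (Taubes' SW$=$Gr plus the blow-up formula). With this in hand, the intersection count with the fiber surface finishes the argument.
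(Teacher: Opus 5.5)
Your overall architecture is the paper's: Usher's theorem once for $P=\TT^{4\prime}_\sur\#_{T_z'=T_z''}\TT^{4\prime\prime}_\sur$, and once more for $Z$ and $Z_{AP}$. However, your verification of the hypotheses rests on a false computation: $b^+(\TT^4_\sur)=1$, not $3$. Indeed $H_1(\TT^4_\sur)\cong\ZZ^2$, and torus surgery preserves $\chi=0$ and $\sigma=0$. Hence $b_2=2$ and $b^+=b^-=1$, so numerically $\TT^4_\sur$ looks exactly like $T^2\times S^2$.

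This breaks your argument in two places.
\begin{itemize}
\item Your Seiberg--Witten proof that $\TT^4_\sur$ is minimal (Taubes' constraint on basic classes plus the blow-up formula) is a $b^+>1$ argument and does not apply. Also, the torsion-ness of $K$ is asserted (``Kodaira dimension $0$'') rather than proved.
\item The claim ``$\TT^4_\sur$, hence $S$ and $R$, are not (blown-up) ruled since $b^+=3$'' collapses. Yet that is exactly what you need to exclude Usher's exceptional case in the $T_z$-sum forming $P$. It is also the input for the Li--Liu/Taubes uniqueness of exceptional classes that you invoke for $S$ and $R$.
\end{itemize}

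The missing ingredient is the asphericity of $\TT^4_\sur$, which is what the paper uses (Lemma 5 of \cite{BK}: $\TT^4_\sur$ is a circle bundle over a torus bundle over the circle). Asphericity gives three things at once:
\begin{itemize}
\item $\TT^4_\sur$ contains no essential spheres, so it is minimal.
\item $\TT^4_\sur$ is not homotopy equivalent to an $S^2$-bundle over a surface.
\item Via the Hopf sequence $\pi_2\to H_2\to H_2(\pi_1)\to 0$, the spherical classes in $S$ (resp.\ $R$) are precisely the integral combinations of $\mathfrak{E}_1,\mathfrak{E}_2$ (resp.\ the multiples of $\mathfrak{E}$).
\end{itemize}
A spherical class of square $-1$ is then $\pm\mathfrak{E}_i$. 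It pairs to $\pm1$ with $\tilde\Sigma_S$ (resp.\ $\pm2$ with $\tilde\Sigma_{AP}$), so neither complement contains a $-1$ sphere. This needs no appeal to uniqueness of exceptional classes.

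If you prefer to repair your own route, two facts suffice.
\begin{itemize}
\item Minimality: the symplectic square-zero tori $E$ and $T_z$ miss $T_x\sqcup T_y$ and span $H_2(\TT^4_\sur;\mathbb{Q})$. Adjunction then makes $K$ torsion, which contradicts $K\cdot C=-1$ for any symplectic $-1$ sphere $C$.
\item Non-ruledness: it follows from $\pi_1(\TT^4_\sur)$ being non-abelian (see the Remark after Proposition \ref{prop2.1}).
\end{itemize}

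Finally, for the genus-two sums you must also rule out that $P$ is an $S^2$-bundle over a genus-two surface with $\Sigma_P$ a section. Minimality of $P$ does not do this, since such bundles are minimal. The Euler characteristic does: such a bundle has $\chi=-4$, whereas $\chi(P)=0$, $\chi(S)=2$ and $\chi(R)=1$.

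Two minor corrections: Usher's theorem has no rational-elliptic exception, and the blow-up formula gives $K_M\pm E$, not $K\pm 2E$.
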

\noindent{\em Proof.}
First, Lemma 5 of \cite{BK} states that $\TT^4_\sur$ is aspherical (this follows from the fact that $\TT^4_\sur$ is a circle bundle over a torus bundle over the circle).

The manifold $P$ is symplectically minimal. To see this, note that since $\TT^4_\sur$ is aspherical, it contains no $-1$ symplectic 2-spheres. Asphericity also  implies that $\TT^4_\sur$ is not homotopy equivalent to a 2-sphere bundle over a surface. Hence Usher's theorem \cite{Usher} implies that  the symplectic sum $P=\TT^{4\prime}_\sur \#_{T_z'=T_z''} \TT^{4\prime\prime}_\sur $,  is symplectically  minimal.

  The Hopf exact sequence  
 $$\pi_2(\TT^4_\sur\#\bCP^2)\to H_2(\TT^4_\sur\#\bCP^2)\to H_2(\pi_1(\TT^4_\sur))\to 0$$
shows that the spherical classes in $H_2(\TT^4_\sur\#\bCP^2)$
are precisely the multiples of the exceptional class $[\mathfrak{E}]$. Since $\tilde\Sigma_{AP}\cdot n[\mathfrak{E}]=2n$, The complement $R\setminus \tilde\Sigma_{AP}$ contains no homologically essential   2-spheres, and in particular no symplectic $-1$ spheres.  

Similarly, the spherical classes in $H_2(S)$ are precisely the linear combinations of the two exceptional 2-spheres.  
Note that $(m\mathfrak{E}_1+n\mathfrak{E}_2)^2=-(m^2+n^2)$ and $\tilde \Sigma_S\cdot(m\mathfrak{E}_1+n\mathfrak{E}_2)=m+n.$  Hence if a spherical class has square $-1$
it intersects $\tilde  \Sigma_S$.  In particular, 
the complement $S\setminus \tilde \Sigma_{S}$ contains no $-1$ symplectic 2-spheres.

The manifolds $P, S, $ and $ R$ do not admit   the structure of
an $S^2$ bundle-with-section over a genus 2 surface, since the total space of such a bundle has Euler characteristic $-4$, whereas $\chi(P)=0$, $\chi(R)=1$, and $\chi(S)=2$.

Applying Usher's theorem again shows that the symplectic sums $Z=P\#_{\Sigma_P=\tilde\Sigma_S} S$
and  $Z_{AP}=P\#_{\Sigma_P=\tilde\Sigma_{AP}} R$
are symplectically minimal.
 \qed

\medskip

 A theorem of Taubes \cite{Taubes}, extended to the $b^+=1$ case by Li and Liu \cite{Li, Liu}, asserts that symplectic minimality implies {\em smooth minimality} in dimension 4. One concludes the following.
 
\begin{thm} The manifold  $Z$  is a closed, simply-connected, 
minimal symplectic 4-manifold  homeomorphic, but not diffeomorphic  to $\CP ^2\# 3\overline{\CP }^2$.  The manifold $Z_{AP}$   is a closed, simply-connected, 
minimal symplectic 4-manifold  homeomorphic, but not diffeomorphic  to    $\CP ^2\# 2\overline{\CP }^2$.
\qed
\end{thm}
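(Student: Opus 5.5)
The plan is to assemble the final theorem from the results already established, and to add the standard Seiberg–Witten input that distinguishes minimal symplectic manifolds from the standard rational surfaces.

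\emph{Topological part.} For $Z$, Theorem \ref{thm5.1} gives $\pi_1(Z)=1$. The paragraph following it computes $\chi(Z)=6$ and $\sigma(Z)=-2$, so the intersection form is odd, indefinite and unimodular of rank $4$, hence isometric to $(1)\oplus(-1)^3$. Since $Z$ is smooth its Kirby–Siebenmann invariant vanishes, and Freedman's theorem gives $Z\approx\CP^2\#3\bCP^2$. For $Z_{AP}$, Theorem \ref{Zap} gives $\pi_1(Z_{AP})=1$. With $\chi=5$ and $\sigma=-1$ we get $b_2=3$ and form $(1)\oplus(-1)^2$; oddness holds because $b^+=1$, $b^-=2$ is not compatible with an even form of signature $-1$. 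So $Z_{AP}\approx\CP^2\#2\bCP^2$.

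\emph{Symplectic and minimality part.} Symplecticity of $Z$ and $Z_{AP}$ comes from the discussion before the preceding lemma: Luttinger surgery, symplectic blow-up, and Gompf's symplectic sum along symplectic genus two surfaces of square zero. Symplectic minimality is exactly the preceding lemma. Smooth minimality then follows from Taubes together with Li–Liu in the $b^+=1$ case, which together say a minimal symplectic $4$-manifold that is not rational or ruled contains no smoothly embedded sphere of square $-1$. We need to check the manifolds are not rational or ruled; this is automatic here because a minimal rational surface is $\CP^2$ or an $S^2$-bundle over $S^2$, neither of which has the Betti numbers in question, and ruled surfaces over higher genus are excluded by $\pi_1=1$.

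\emph{Non-diffeomorphism.} Suppose $Z\cong\CP^2\#3\bCP^2$ by a diffeomorphism. Then $Z$ would contain an embedded sphere of square $-1$, namely the image of an exceptional $\bCP^1$, contradicting smooth minimality. The same argument applies to $Z_{AP}$.

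The main obstacle is the careful application of the Li–Liu extension in the $b^+=1$ setting, where wall-crossing makes the Seiberg–Witten argument delicate. I would cite it as a black box in the form ``for symplectic $4$-manifolds with $b^+=1$, symplectic minimality implies smooth minimality unless the manifold is rational or ruled'' and verify only the rational/ruled exclusion above.
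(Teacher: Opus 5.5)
Your proposal is correct and follows the paper's route: Freedman for the homeomorphism type, the Usher-based lemma for symplectic minimality, Taubes/Li--Liu to upgrade this to smooth minimality, and the exceptional sphere in $\CP^2\#k\bCP^2$ to rule out a diffeomorphism. Your extra rational/ruled exclusion is harmless, since the paper cites the minimality equivalence unconditionally, but you should read ``rational'' there in the symplectic sense, where McDuff's classification of symplectically minimal rational manifolds applies; ruling out ``diffeomorphic to a rational surface'' without further input would presuppose the conclusion.
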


\subsubsection{Reverse engineering}
To give the constructions in this article some wider context, we provide the following imperfect discussion of 
Fintushel-Stern's   {\em reverse engineering}  symplectic 4-manifolds \cite{FS1, FS2, FPS} method. For the  geography  problem, one starts with a  minimal symplectic 4-manifold $X$ with 
some prescribed Euler characteristic and signature, built, perhaps, by taking surface sums of simple  symplectic 4-manifolds, such as products of surfaces.
One then searches  for  Lagrangian tori  in $X$ to Luttinger surger,  in hopes of transforming $X$  into a    symplectic 4-manifold $Y$ with $\pi_1(Y)=0$. If such surgeries can be found, and the result $Y$ is symplectically irreducible,    then 
as in the examples of $Z$ and $Z_{AP}$,  one produces an exotic copy of some standard 4-manifold.   Indeed, this philosophy is what leads to the building block approach taken here, as well as in \cite{AP1,AP2,BK} and much similar work in the literature.

Reverse engineering does  more, however, addressing the botany problem. Using the Morgan-Mrowka-Szabo
theorem \cite{MMS}, Fintushel-Stern show how to compute some of  the Seiberg-Witten invariants of  an infinite family of  4-manifolds obtained by torus surgery.    It is beyond the scope of this article to describe this aspect of their work. But,
using it, one can show the following.  Construct an infinite family of   smooth manifolds  $P_n$, $n=1,2,3,\cdots$
by replacing the  surgery in the construction of $P$, with corresponding  relation $t''=[x'',\bar z'']$, by the {\em non-Luttinger} torus surgery giving the relation $t''=[x'',\bar z'']^n$.
The reader can easily check, by rereading the last paragraph of the proofs of Theorems \ref{thm5.1}  and \ref{Zap} but inserting this altered relation, that $P_n\#_\Sigma S$ and $P_n\#_\Sigma R$ are simply connected for all $n\in \NN$.
Reverse engineering can then be used to  show that  $\{P_n\#_\Sigma S\}_{n\in \NN}$   and $\{P_n\#_\Sigma R\}_{n\in \NN}$   each  contains   an infinite  subfamily of pairwise non-diffeomorphic   manifolds.  An argument which applies to both 
$Z$ and $Z_{AP}$ can be found on page 343 of \cite{BK3}.   Since $P_1=P$,  the first family contains $Z$ and the second $Z_{AP}$.

 \section{Cut-and-paste operations in 4-dimensions} \label{CP} The following operations are discussed in detail in any 4-manifold textbook.
\subsubsection{Torus surgery} 
Given a framed   embedding  $\alpha: T\times D^2\subset X$ of a 2-torus $T$ into  a 4-manifold $X$ and an isotopy class $\beta$ of an orientation-preserving  diffeomorphism  
$T \times S^1\to T\times S^1$ (i.e. $\beta\in SL(3,\ZZ)$), let  $X(\beta)$ denote 4-manifold obtained from $X$ by  removing $\alpha( T\times D^2)$ and replacing it using $\alpha\circ\beta$:
$$
X( \beta)=X\setminus \alpha(T\times D^2)\cup_{\alpha\circ\beta}T\times D^2.
$$

\medskip
Turning the usual handle decomposition for $T\times D^2$  upside down reveals that 
$X(\beta)$ is obtained from $X \setminus  \alpha(T\times D^2)$ by attaching, in succession, one 2-handle, two 3-handles, and one 4-handle,  to $\partial\big(X\setminus  \alpha(T\times D^2))$. 
Hence, using  the SVK theorem, 
a presentation for $\pi_1(X(\beta),e)$ is obtained from one of $\pi_1\big(X\setminus \alpha(T\times D^2),e\big)$ by adding a word representing $m_\beta\in \pi_1\big(X\setminus \alpha(T\times D^2),e\big)$ to the list of relations, where $m_\beta$ denotes the boundary of the core of the 2-handle,  $\alpha\circ\beta(\{r\}\times \partial D^2)$,  connected to the base point by some path. The Euler characteristics and signatures  satisfy $\chi(X(\beta))=\chi(X)$ and $\sigma(X(\beta))=\sigma(X)$.

\subsubsection{Surface sum} Given closed oriented 4-manifolds $X,X'$,
a closed oriented surface $F$, a an orientation-reserving diffeomorphism $\Psi:F\to F$,  and oriented framed
embeddings $\tilde h:F\times D^2\to X,~\tilde h':F\times D^2\to X'$ of $F$, the {\em surface sum of $X$ and $X'$ along $F$},  is the closed, oriented 4-manifold:
$$
X\#_{F,\Psi} X':=\big(X\setminus \tilde h(F\times D^2) \big) 
\cup _{\psi} \big(X'\setminus \tilde h'(F\times D^2) \big),
$$
where $\psi\colon \tilde h(F\times S^1) \to  \tilde h'(F\times S^1) $ is given by
$$\psi(\tilde h(f,e^{\theta\bbi}))=\tilde h(\Psi(f),e^{-\theta\bbi}).$$

Then $\chi (X\#_FX')=\chi (X)+\chi (X')-2\chi(F)$ and $\sigma(X\#_FX')=\sigma(X)+\sigma(X')$.

\subsubsection{Blowing up at a point on a surface}
Given a smooth point $b$ on a smooth oriented (but not necessarily closed) surface $\Sigma$ in a smooth 4-manifold $X$, the {\em blow up} of the triple
$(X,\Sigma, b)$ is a smooth map   $c:(\tilde X,\tilde \Sigma, \mathfrak{E})\to(X,\Sigma, b)$,
where $(\tilde X,\tilde \Sigma)$ is the pairwise oriented connected sum
of $(X,\Sigma)$ and $(\overline{\CP }^2, \CP ^1)$ at $b$,
and $\mathfrak{E}$, the {\em exceptional curve}, is one of the   projective lines (a 2-sphere) in $\overline{\CP }^2$ which misses the ball pair $(B^4_\ep(b),D^2_\ep(b))$ at which the connected sum is taken. 
The smooth surface $\tilde \Sigma$, called the {\em proper transform} of $\Sigma$, meets $\mathfrak{E}$ transversely at one point. If $\Sigma$ is closed, then
 $\tilde \Sigma\cdot \tilde \Sigma=
 \Sigma\cdot \Sigma-1.$
 
 The normal $D^2$ bundle of $\mathfrak{E}$ in $\tilde X$  has Euler number $-1$ and boundary the 3-sphere. 
The smooth
{\em blow down} map $c:\tilde X\to X$  collapses $\mathfrak{E}$ to the point $b$, and  $c:\tilde X \setminus \mathfrak{E}\to X\setminus \{b\}$ is a diffeomorphism. 
In addition,  $c(\tilde \Sigma)=\Sigma$, $c:\tilde \Sigma\to \Sigma$ is a diffeomorphism, and $c^{-1}(\Sigma)=\tilde\Sigma\cup \mathfrak{E}$.

 If $\Sigma'\subset X$ is another smooth surface which
 intersects $\Sigma$ transversely once  at the point $b$, then the proper transforms $\tilde \Sigma$ and $\tilde \Sigma'$ in $\tilde X$ are disjoint.

 \medskip
 
   By the SVK theorem, $c:\pi_1(\tilde X)\to \pi_1(X)$ is an isomorphism. The presence of the   2-sphere $\mathfrak{E}$ dual to $\tilde \Sigma$ shows that the inclusion
  $\pi_1(\tilde X\setminus \tilde \Sigma)\to \pi_1(\tilde X)$ is an isomorphism, unlike, a priori, the inclusion $\pi_1( X\setminus  \Sigma)\to \pi_1( X)$.
\subsubsection{Resolving a transverse double point}
 Any positive transverse intersection point of oriented surfaces in an oriented 4-manifold can be locally parameterized by
$\{zw=0\}$ in $\CC^2$, a union of two transverse complex lines. These meets the unit 3-sphere in an oriented Hopf link. This Hopf link bounds an unknotted annulus $A^\flat$ in $S^3$ which can be pushed radially into $B^4$ to obtain a properly embedded annulus $A$. Removing the two intersecting 2-disks $B^4\cap \{z=0\}$ and $B^4\cap \{w=0\}$ and replacing them by $A$ results in a smooth surface with Euler characteristic reduced by
$2$.   The fundamental group $\pi_1(B\setminus A)$ is infinite cyclic, generated by any meridian \cite[Proposition 6.2.1]{GS}.

\section{Proof of Proposition \ref{thm1}}\label{prop21}

  \begin{wrapfigure}[9]{r}{0.25\textwidth}
\begin{center}
\vskip-.4in
\includegraphics[width=2in]{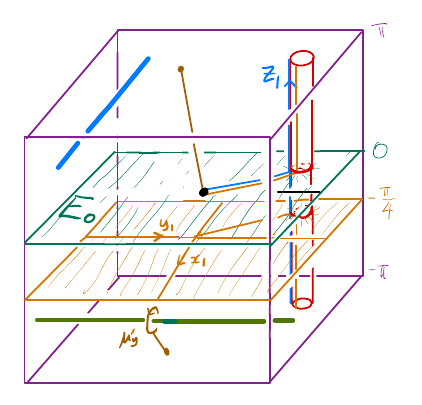}
 \vskip-.2in
\caption{\label{fig48fig}  }
\end{center}
\end{wrapfigure}
\noindent{\em Proof.}  Figure \ref{fig48fig} illustrates another torus $\TT^2\times \{-\tfrac\pi 4\}$ in $\TT^3$ and four more loops, $x_1,y_1,z_1,$ and $\mu_y'$.  
 Note that $x_1$ and $y_1$ are connected to the base point by a path which follows $z_1$ from $0$ to $\tfrac\pi 4$.   In conjunction with Figure \ref{fig26fig}, one sees easily that
  $x_1=z_1x \bar z_1$, $y_1=z_1y \bar z_1$,   $z_1=z$ and $\mu_y'=z_1\mu_y\bar z_1$
  in $\pi_1(Y^c,e)$.

We apply the SVK theorem to  the decomposition
  $Y^c=G\cup H$, with 
 $G=Y^c\cap \{-\tfrac\pi 4\leq z\leq 0\}\cup\partial \Nbd{C_z}$ and $
 H= Y^c\cap \{0\leq z\leq  \tfrac{7\pi} {4}\}.$
 The overlap $F:=G\cap H$ is a genus 2 surface  with $$\pi_1(F,e)=\langle x,y,x_1,y_1\mid [y,\bar x]=[y_1,\bar x_1]\rangle.\hskip1.5in$$  
 
  \begin{wrapfigure}[8]{r}{0.25 \textwidth}
\begin{center}
\vskip-.3in
\includegraphics[width=2.2in]{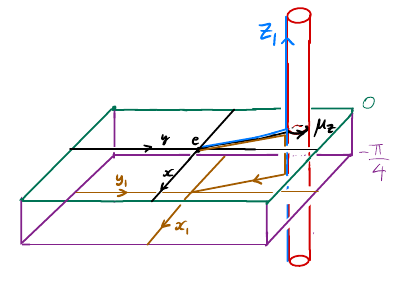}
 \vskip-.3in
\caption{\label{fig49fig}  }
\end{center}
\end{wrapfigure}

The subset $G$, depicted in Figure \ref{fig49fig},  deformation retracts onto a 2-complex which is the union of a torus with generators $\mu_z$ and $z_1$, and a punctured torus with generators $x,y$, attached to the first torus so that its boundary glues to $\mu_z$.   
It follows that
 $\pi_1(G,e)=\langle x,y,z_1\mid  [[y,\bar x],z_1]\rangle.$  
 The inclusion $\pi_1(F,e)\to \pi_1(G,e)$ is given by: 
$$x\mapsto x, y\mapsto y, x_1 \mapsto z_1x\bar z_1, y_1\mapsto z_1y\bar z_1.\hskip1.5in$$


  \begin{wrapfigure}[11]{r}{0.25\textwidth}
\begin{center}
\includegraphics[width=2.2in]{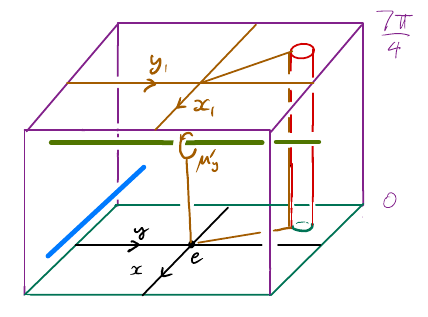}
 \vskip-.2in
\caption{\label{fig50fig}  }
\end{center}
\end{wrapfigure}
The subset $H$, depicted in Figure \ref{fig50fig},  deformation retracts onto the wedge of the two tori $\partial\Nbd{T_x}$ and $\partial\Nbd{T_y}$, and a moment's reflection on  the figure reveals that 
$$\pi_1(H,e)=\langle x,\mu_x,y_1,\mu_y' \mid [x,\mu_x],~[y_1, \mu_y' ]\rangle \hskip1.5in $$
and that  that
the inclusion $\pi_1(F,e)\to\pi_1(H,e)$ is given by $$
x\mapsto x,~ y\mapsto y_1\mu_x, ~x_1 \mapsto \mu_y'   x , ~ y_1 \mapsto   y_1.\hskip1.5in$$

\vskip.3in

Hence the SVK theorem implies that $\pi_1(Y^c,e) $ has the presentation:  
\begin{align*}
\pi_1(Y^c,e) &=\langle
x,y,z_1,  \mu_x  , y_1,\mu_y'\mid [[y,\bar x],z_1],~[\mu_x,x],~[y_1,\mu_y' ], y=y_1\mu_x, z_1x\bar z_1=\mu_y'x, z_1 y \bar z_1=y_1
\rangle\\
&=  \langle x,y,z_1  \mid [[y,\bar x],z_1],       [[x, \bar z_1],y], [[\bar y,z_1],x]\rangle. 
\end{align*}
Since $z_1=z$ in $\pi_1(Y^c,e)$, the proof is complete. 
\qed

 \end{document}